\newcommand{\ms}[1]{\mathscr{#1}}
\DeclarePairedDelimiter\autobracket{(}{)}
\newcommand{\br}[1]{\autobracket*{#1}}
\newcommand{\comm}[1]{{\color{red}#1}}
\DeclareMathOperator*{\argmax}{argmax}
\DeclareMathOperator*{\argmin}{argmin}
\DeclareMathOperator{\sign}{sign}
\newcommand{\abs}[1]{\lvert #1 \rvert}
\newcommand{\norm}[1]{\left\lVert #1 \right\rVert}
\newcommand{\ceil}[1]{\lceil #1 \rceil}
\newcommand{\floor}[1]{\lfloor #1 \rfloor}
\newcommand{\st}{\;:\;}                         
\newcommand{\ve}[2]{\langle #1 ,  #2 \rangle}   
\newcommand{\eqdef}{\stackrel{\text{def}}{=}}
\newcommand{\R}{\mathbb{R}}
\newcommand{\Reg}{\psi}
\newcommand{\Exp}{\boldsymbol{E}}
\newcommand{\Prob}{\boldsymbol{P}}
\newcommand{\calP}{\mathcal{P}}
\newcommand{\sA}{{\sigma_{\min}(A)}}
\newcommand{\sB}{{\sigma_{\max}(B)}}
\newcommand{\sG}{{\sigma_{\max}(G)}}
\newcommand{\smH}{{\sigma_{\min}(H)}}
\newcommand{\sMH}{{\sigma_{\max}({H})}}
\newcommand{\stH}{{\sigma_{\max}(\tilde{H})}}
\newcommand{\cG}{{\cal G}}
\newcommand{\expec}[1]{\mathbf{E}\left[ #1 \right] }
\newcommand\numberthis{\addtocounter{equation}{1}\tag{\theequation}}  
\newcommand\myeq{\stackrel{\mathclap{\tiny\mbox{def}}}{=}}
\newcommand{\zo}{{\emph{ZO}}}
\newcommand{\fo}{{\emph{FO}}}
\newcommand{\beq}{\begin{equation}}
\newcommand{\eeq}{\end{equation}}
\newcommand{\beqa}{\begin{eqnarray}}
\newcommand{\eeqa}{\end{eqnarray}}
\newcommand{\beqas}{\begin{eqnarray*}}
\newcommand{\eeqas}{\end{eqnarray*}}
\newcommand{\bi}{\begin{tightitemize}}
\newcommand{\ei}{\end{tightitemize}}
\newcommand{\ba}{\begin{array}}
\newcommand{\ea}{\end{array}}
\def\eqnok#1{(\ref{#1})}
\DeclareMathOperator{\dom}{dom}
\DeclareMathOperator{\Tr}{Tr}
\newtheorem{definition}{Definition}[section]
\newtheorem{lemma}{Lemma}[section]
\newtheorem{theorem}{Theorem}[section]
\newtheorem{proposition}{Proposition}[section]
\newtheorem{assumption}{Assumption}[section]
\newtheorem{remark}{Remark}
\newcommand{\defeq}{\stackrel{def}{=}}
\def\endproof{{\ \hfill\hbox{%
     \vrule width1.0ex height1.0ex    }\parfillskip 0pt}\par}
\def\vgap{\vspace*{.05in}}
\def\E{{\bf E}}
\def\kb{{\textcolor{blue}{\bf KB:}}}
\def\ar{{\textcolor{green}{\bf AR:}}}
\def\exp{{\rm exp}}
\def\prob{{\rm Prob}}
\def\setU{{X}}
\def\hu{{u^*}}
\def\uh{{h_X}}
\def\tu{{\tilde x}}
\def\usigma{{\sigma_X}}
\newcommand{\bbe}{\Bbb{E}}
\def\prob{\mathop{\rm Prob}}
\def\Prob{{\hbox{\rm Prob}}}
\newcommand{\bbr}{\Bbb{R}}
\def\w{\omega}
\def\sG{\tilde {\cal H}}
\def\rint{{\rm rint \,}}
\def\lin{{\rm lin \,}}
\def\cA{{\cal A}}
\def\SO{{\cal SO}}
\def\sigmasco{{\sigma}}
\def\lipsh{{M}}
\def\Optsco{{\Psi^*}}
\def\half{{\textstyle{\frac{1}{2}}}}
\def\cB{{\cal B}}
\def\cC{{\cal C}}
\def\cD{{\cal D}}
\def\cP{{\cal P}}
\def\cG{{\cal G}}
\def\cR{{\cal R}}
\def\cQ{{\cal Q}}
\def\cS{{\cal S}}
\def\optcCE{{\cal C}_0^s}
\def\optcCP{{\cal C}_1^s}
\def\tcC{{\tilde{\cal C} }}
\def\cH{{\cal H}}
\def\cX{{\cal X}}
\def\cY{{\cal Y}}
\def\cI{{\cal I}}
\def\cF{{\cal F}}
\def\cK{{\cal K}}
\def\cU{{\cal U}}
\def\tx{{\tilde x}}
\def\ty{{\tilde y}}
\def\eini{{\cal V}}
\def\einis{{\cal V}}
\def\lb{{\rm lb}}
\def\ub{{\rm ub}}
\def\sd{{\rm sd}}
\def\tr{{\rm tr}}
\def\dom{{\rm dom}}
\def\risk{{l}}
\def\regu{{r}}
\def\zg{{G_t\left( x_t,u_t\right) }}
\def\zh{{H_t\left( x_t,u_t\right) }}
\def\bg{{\bar{G}_t}}
\def\bh{{\bar{H}_t}}
\crefname{assumption}{Assumption}{assumptions}
\Crefname{table}{Table}{Tables}
\crefname{equation}{}{}
\title{Stochastic Multi-level Composition Optimization Algorithms with Level-Independent Convergence Rates\thanks{Authors are listed by alphabetical order.}}
\author{Krishnakumar Balasubramanian\thanks{Department of Statistics, University of California, Davis. \texttt{kbala@ucdavis.edu}.}
\and Saeed Ghadimi\thanks{Department of Management Sciences, University of Waterloo. \texttt{sghadimi@uwaterloo.ca}.}
\and Anthony Nguyen\thanks{Department of Mathematics, University of California, Davis. \texttt{anthonynguyen@math.ucdavis.edu}. }
}
\begin{document}
\maketitle
\begin{abstract}
In this paper, we study smooth stochastic multi-level composition optimization problems, where the objective function is a nested composition of $T$ functions. We assume access to noisy evaluations of the functions and their gradients, through a stochastic first-order oracle. For solving this class of problems, we propose two algorithms using moving-average stochastic estimates, and analyze their convergence to an $\epsilon$-stationary point of the problem. We show that the first algorithm, which is a generalization of~\cite{GhaRuswan20} to the $T$ level case, can achieve a sample complexity of $\mathcal{O}_T(1/\epsilon^6)$ by using mini-batches of samples in each iteration, where $\mathcal{O}_T$ hides constants that depend on $T$. By modifying this algorithm using linearized stochastic estimates of the function values, we improve the sample complexity to $\mathcal{O}_T(1/\epsilon^4)$. {\color{black}This modification not only removes the requirement of having a mini-batch of samples in each iteration, but also makes the algorithm parameter-free and easy to implement}. To the best of our knowledge, this is the first time that such an online algorithm designed for the (un)constrained multi-level setting, obtains the same sample complexity of the smooth single-level setting, under standard assumptions (unbiasedness and boundedness of the second moments) on the stochastic first-order oracle.
\end{abstract}

\section{Introduction}\label{sec:intro}
We consider multi-level stochastic composition optimization problems of the form
\begin{align}\label{eq:mainprob}
\min_{x \in X} \Big\{  F(x) = f_1 \circ \cdots \circ f_T(x)  \Big\},
\end{align}
where $f_i:\mathbb{R}^{d_i} \to \mathbb{R}^{d_{i-1}}$ for $i = 1, \dots, T$ ($d_0 = 1$) are continuously differentiable functions, {\color{black}the composite function $F$ is bounded below by $F^* > -\infty$}, and
$X$ is a closed convex set. We assume that the exact values and derivatives of $f_i$'s are not available. In particular, we assume that $f_i(y) = \mathbb{E}_{\xi_i}[G_i(y,\xi_i)]$ for some random variables $\xi_i \in \mathbb{R}^{\tilde{d_i}}$.
Note that when $T=1$, the problem reduces to the standard stochastic optimization problem which has been well-explored in the literature; see, for example~\cite{robbins1951stochastic, kushner2003stochastic, borkar2009stochastic, GhaLan13, GhaLan16, shapiro2014lectures}, for a partial list. In this work, we consider stochastic first-order algorithms for solving~\cref{eq:mainprob} when $T\geq 1$.  Note that the gradient of the function $F(x)$ in~\cref{eq:mainprob}, is $\nabla F(x) = \nabla f_T(y_T)\nabla f_{T-1}(y_{T-1}) \cdots \nabla f_1(y_1) $, {\color{black}where $\nabla f_i$ denotes the transpose of the Jacobian of $f_i$}, $y_i = f_{i+1} \circ \cdots \circ f_T(x)$ for $1 \leq i < T$, and $y_T = x$. Our goal is to solve the above optimization problem, given access to noisy  evaluations of $\nabla f_i$'s and $f_i$'s. Precise assumptions on our stochastic first-order oracle considered will be stated later in~\cref{sec:originalnasa}. Because of the nested nature of the gradient $\nabla F(x)$, obtaining an unbiased gradient estimator in the online setting, with controlled higher moments, becomes non-trivial.

Although problems of the form in~\cref{eq:mainprob} have been considered since the work of~\cite{Ermolievold}, recently there has been a renewed interest in this problem due to applications arising in mathematical finance, nonparametric statistics, deep generative modeling and reinforcement learning. We refer the reader to~\cite{ermoliev2013sample, wang2017stochastic, dentcheva2017statistical, blanchet2017unbiased, wang2016accelerating,bora2017compressed, yang2019multi-level, GhaRuswan20, zhang2019multi, ongie2020deep} for such applications and various algorithmic approaches for solving problem~\cref{eq:mainprob}. In particular~\cite{wang2017stochastic} and~\cite{yang2019multi-level} considered the case of $T=2$ and general $T$ respectively, and analyzed stochastic gradient-type algorithms. Such an approach leads to level-dependent and sub-optimal convergence rates. However, large deviation and Central Limit Theorem results established in~\cite{ermoliev2013sample} and~\cite{dentcheva2017statistical}, respectively, show that in the sample-average or empirical risk minimization setting, the $\argmin$ of the problem in~\cref{eq:mainprob} based on $n$ samples, converges at a level-independent rate (i.e., dependence of the convergence rate on the target accuracy is independent of $T$) to the true minimizer, under suitable regularity conditions. Hence, it is natural to ask the following question: \emph{Is it possible to construct iterative online algorithms for solving problem~\cref{eq:mainprob} with level-independent convergence rates?} Recently, for the case of $T=2$,~\cite{GhaRuswan20} proposed a single time-scale Nested Averaged Stochastic Approximation (NASA) algorithm. The authors showed that by modifying the specific Lyapunov function, defined in \cite{ruszczynski1987linearization} for nonsmooth single-level stochastic optimization, the convergence analysis of the NASA algorithm can be established such that its complexity bound matches the case of $T=1$. This resolved the above question for $T=2$. However, constructing similar algorithms for the case of general $T$ remained less investigated.
\vgap

\textbf{Main contributions.} In this work, we propose two algorithms for solving problem \eqnok{eq:mainprob} with level-independent convergence rates in the stochastic first-order oracle setting, under mild assumptions. Our algorithms are applicable to both unconstrained and constrained cases, as we do not make any boundedness assumption on the feasible set $X$. Their complexity results are summarized in~\cref{table:summary}. The first algorithm is based on an extension of the NASA algorithm from~\cite{GhaRuswan20} (proposed for the case of $T=2$) to the general $T\geq1$ setting, requiring a mini-batch of sample in each iteration. Although this algorithm has level-independent convergence rates, the sample complexity (i.e., the number of calls to stochastic first-order oracle) does not match that of standard stochastic gradient algorithm for $T=1$~\text{or} the NASA algorithm for $T=2$. The second algorithm is based on a modification to the NASA algorithm by adding a linear bias correction term in evaluating the inner function values, motivated by the recent work \cite{rusz20} for nonsmooth multi-level composition problems.
For any $T \geq 1$, we show that this algorithm has the same oracle complexity as that of the regular stochastic gradient algorithm for the case of $T=1$, thereby providing a complete answer to the question above. We emphasize that unlike our first algorithm, this algorithm does not require a mini-batch of samples in any iteration and hence is more suitable to the online setting. Furthermore, it works with any positive constant step-size parameter choice (independent of problem parameters), thus making it easy to implement.
\vgap

\textbf{Comparisons to related works.}
A summary of our results, in comparison to the most related work of~\cite{yang2019multi-level} is provided in~\cref{table:summary}. We use $\mathcal{O}(\cdot)$ to represent the fact that the constants involved are only numerical constants that are independent of $T$. However, when the constants involved are dependent on $T$, we use $\mathcal{O}_T(\cdot)$.

The approach and the results in~\cite{yang2019multi-level} are provided only for the unconstrained setting. \textcolor{black}{Furthermore,~\cite{yang2019multi-level} requires an additional bounded fourth moment assumption on the stochastic Jacobian matrices. In an earlier version of our work uploaded to arXiv, we also made the same assumption, which however, we do not require in this work, thereby widening the applicability of the proposed method. } We also highlight the related work of~\cite{zhang2019multi} which considered problems of the form $\min_{x \in \mathbb{R}^{d_T}}\{ F(x) + H(x)\}$, with $F(x)$ being a multi-level composite function as in~\cref{eq:mainprob} and $H(x)$ being a convex and lower-semi-continuous function. Typically $H(x)$ could be considered as an indicator function of the constrained set $X$ to relate the above problem to our setup in~\cref{eq:mainprob}. The algorithm proposed in~\cite{zhang2019multi} is a proximal variant of SPIDER variance reduction technique~\cite{fang2018spider} and is a double-loop algorithm. Hence, it is predominantly applicable for finite-sum problems and is not so suitable for the general online problems that we focus on. Indeed, they assume that for a fixed batch of samples, one could query the oracle on different points, which is not suited for the general online stochastic optimization setup. Furthermore,~\cite{zhang2019multi} assume a much stronger mean-square Lipschitz smoothness assumption on the individual functions $f_i$ and their gradients, to obtain a complexity bound of $\mathcal{O} \left(T^6\rho^T/\epsilon^{3}\right)$, where $\rho$ is a problem dependent constant factor. To obtain their result, they also need a mini-batch of samples, with batch sizes of the order $T^3\rho^T$, which makes their approach impractical to use even for moderately large values of $T$. As mentioned above, our second algorithm does not have any such requirements, making it easy to be practically applicable for large values of $T$. 

As mentioned above, our~\cref{alg:modifiednasa} is related to a concurrent work \cite{rusz20}. In this work, the author focuses on nonsmooth multi-level composition problems and provides asymptotic convergence of the proposed algorithm to a stationary point of the problem by analyzing a system of differential inclusions which requires the compactness of the feasible set $X$. By further making the smoothness assumption, the author also establishes a sample complexity of ${\cal O}_T(1/\epsilon^4)$, similar to that of ~\cref{alg:modifiednasa} in \cref{optimal_theorem}. However, our convergence analysis here is distinct since we do not require the boundedness assumption of the feasible set which makes our method applicable to both unconstrained constrained problems.\footnote{We also remark that the finite-time convergence analysis of \cite{rusz20}, from our communication with the author, was not complete in the first version released on arXiv. However, more recently, after release of the first version of our paper on arXiv, the author has refined the convergence analysis in \cite{rusz20}.}

After our first draft appeared on arXiv,~\cite{chen2021solving} also proposed an approach for stochastic multi-level compositional optimization problems and obtained similar rates as us, albeit only for unconstrained problems and under the stronger assumption that the stochastic functions $G_i(y,\xi_i)$ are Lipschitz, almost surely.\footnote{It is worth noting that \cite{chen2021solving} was released several months after the first draft of  \cite{rusz20}.}


\begin{table}[t]
\centering
\begin{tabular}{|c|c|c|c|c|}
\cline{1-4}
Method & \cite{yang2019multi-level} & ~\cref{alg:originalnasa}  & ~\cref{alg:modifiednasa} \\ \hline
Convergence Rate& $\mathcal{O}_T\left(N^{-4/(7+T)}\right)$  &  \multicolumn{2}{c|}{$\mathcal{O}_T\left( N^{-1/2}\right)$}  \\ \hline
Oracle Complexity & $ \mathcal{O}_T\left(1/\epsilon^{(7+T)/2}\right)$&  $\mathcal{O}_T\left(1/\epsilon^{6}\right) $& $\mathcal{O}_T \left(1/\epsilon^{4}\right) $ \\ \hline
Mini-batch &No &Yes & No\\ \hline
Feasible Set & $X= \bbr^d$& \multicolumn{2}{c|}{General case} \\ \hline
Oracle Assumption&Finite $4$th moment&\multicolumn{2}{c|}{Finite $2$nd moment}  \\ \hline
\end{tabular}
\caption{Convergence rates and Oracle complexity results for finding an $\epsilon$-pair $(\bar{x},\bar{z})$ of ~\cref{eq:mainprob}; see \cref{def_Vxz} for details. Convergence rate refers to the upper bound on $\mathbb{E}[\sqrt{V(x,z)}]$ and oracle complexity refers to the number of calls to the stochastic first-order oracle to obtain a $\epsilon$-pair. The constants in~\cite{yang2019multi-level} and our work have exponential dependency on $T$ in the worst case. See ~\cref{remark:rmk1} and~\cref{remark:rmk2} for more details.}
\label{table:summary}
\end{table}

\subsection{Motivating Applications} We now provide two motivating applications of stochastic multi-level composition optimization problems. 

\subsubsection{Risk-averse Optimization}\label{sec:motivatingeg}
Our first motivating example to consider multilevel stochastic composite optimization problems is from the field of risk-averse stochastic optimization~\cite{ruszczynski2006optimization, yang2019multi-level}. Specifically, the mean-deviation risk-averse optimization is given by the following form:
\begin{align}\label{eq:mdriskaverse}
\min_x\left\{ \E[U(x,\xi)] + \lambda \left( \E \left[ \max \left\{ 0 , U(x,\xi) - \E[U(x,\xi)] \right\}^2  \right]\right)^{1/2} \right\}.
\end{align}
As noted in~\cite{yang2019multi-level, rusz20}, the problem in~\eqref{eq:mdriskaverse} is a  stochastic three-level composition optimization problem with
\begin{align*}
&f_3 (x) \coloneqq (x , \E[U(x,\xi)]),  \qquad f_2(y_3, y_4) \coloneqq \left(y_4,  \E \left[ \max \left\{ 0 , U(y_3,\xi) - y_4 \right\}^2  \right] \right), \\
& f_1(y_1,y_2) \coloneqq y_1 + \lambda  \sqrt{y_2+\delta}.
\end{align*}
Here, $\delta>0$ is added to make the square root function smooth. We consider a semi-parametric data generating process given by a single-index model of the form $b = g(a^\top x^*) +\zeta$, where $g:\mathbb{R} \to \mathbb{R}$ is called the link function. Such single-index models are widely used in statistics, machine learning and economics~\cite{ruppert2003semiparametric}. Here, $X$ is the input data which is assumed to be independent of the noise $\zeta$. The goal is to estimate the index $\beta^*$ in a risk-averse manner, as they are well-known to provide stable solutions~\cite{yang2019multi-level}. In this case, $\xi \coloneqq(a,b)$ and the function $U(x,\xi)$ depends on the loss function. We will revisit this example in Section~\ref{sec:exp} for numerical experiments. 

\subsubsection{Training large-scale Graph Neural Networks (GNNs)} Our second motivating example is training GNNs, which has been formulated as a stochastic multi-level compositional optimization problem in~\cite{cong2020minimal}. Each layer of a GNN is given by a matrix $H^{(i-1)} = L \sigma(H^{(i)})W^{(i)} \in \mathbb{R}^{n \times p}$, for $2\leq i \leq T$. Here, $L$ is the normalized graph Laplacian matrix (calculated as $D^{-1/2} A D^{-1/2}$ or $D^{-1}A$, where $D$ is the degree matrix and $A$ is the adjacency matrix given the data matrix $U \in \mathbb{R}^{n \times d}$), $W^{(i)}$ is the weight matrix at layer $i$, and $\sigma$ is the activation function, which either is a sigmoidal function $\sigma(s):=1/(1+e^{-s})$ or the ReLU function $\sigma(s):=\max\{ 0,s\}$ operating entry-wise on matrices. Furthermore, $H^{(T)} \coloneqq U$. When the size of the data set $n$ is large, subsampling methods are used to train the GNN~\cite{cong2020minimal}.

In our notation, the optimization variable $x\coloneqq\{W^{(1)}, \ldots, W^{(T-1)} \}$. The function $f_i$ is given by $f^{(i)} =L \sigma(H^{(i+1)})W^{(i)}$, for $i=2,\ldots T-1$, with $f^{(T)}\coloneqq U$. Furthermore, $f^{(1)}$ will be the user-defined loss function based on the label vector $Y\in\mathbb{R}^n$. The stochasticity in the problem is due to the fact that the data is subsampled when constructing the graph. Specifically, we have the random function given by $\tilde{L}^{(i-1)}\sigma(H^{(i)})W^{(i)}$, where $\tilde{L}^{(i-1)}$ is a stochastic approximation of the matrix $L$ such that $\E[\tilde{L}^{(i-1)}] = L$. We refer the interested reader to~\cite[Section 3]{cong2020minimal}, for additional details. We also remark that while the ReLU activation does not satisfy the smoothness assumptions we make in this work, the sigmoidal function does.

\vspace{0.1in}

\subsection{Organization} The rest of our paper is organized as follows. In \cref{sec:originalnasa}, we present our first algorithm and analyze its convergence for solving \cref{eq:mainprob} with any $T \ge 1$. In \cref{sec:modifiednasa}, we present a modification of this algorithm and show that it can recover the best-known sample complexity for (single-level) smooth stochastic optimization. In \cref{sec:exp}, we present some numerical experiments and conclude the paper with some remarks in \cref{conc_sec}.

\section{Multi-level Nested Averaging Stochastic Gradient Method}\label{sec:originalnasa}
In this section, we present our first algorithm for solving problem \eqnok{eq:mainprob}. As mentioned in~\cref{sec:intro}, the previously proposed stochastic gradient-type methods suffer in terms of the convergence rates when applied for solving this problem~\cite{yang2019multi-level}. The main reason is the increased bias when estimating the stochastic gradient of $F$, for $T \ge 2$. Our proposed algorithm has a multi-level structure -- in addition to estimating the gradient of $F$, we also estimate the values of inner functions $f_i$ by a mini-batch moving average technique, extending the approach in~\cite{GhaRuswan20} for any $T>1$. This will enable us to provide an algorithm with improved convergence rates to the stationary points compared to the prior work~\cite{yang2019multi-level}. Our approach is formally presented in~\cref{alg:originalnasa}.
\begin{algorithm}[ht]
\caption{Multi-level Nested Averaging Stochastic Gradient Method}
\begin{algorithmic}
	\STATE \textbf{Input:} \textcolor{black}{Positive integer sequences $\{b_k,\tau_k\}_{k \ge 0}$, step-size parameter $\beta$, and initial points $x^0 \in  X, \quad z^0 \in \bbr^{d_T}, \qquad w_i^0 \in \bbr^{d_{i-1}} \quad 1 \le i \le T$, and a probability mass function $P_R(\cdot)$ supported over $\{1,2,\ldots,N\}$, where $N$ is the number of iterations}.
	 \STATE  0. Generate a random integer number $R$ according to $P_R(\cdot)$.
    \FOR{$k = 0,1, 2, \dots, R$}
        \STATE  1. Compute
\begin{equation}
\label{def_uk}
u^k = \argmin_{y \in X}\  \left\{\langle z^k, y-x^k \rangle + \frac{\beta}{2} \|y-x^k\|^2\right\},
\end{equation}
stochastic gradients $J_i^{k+1}$, and function values $G_{i,j}^{k+1}$ at $w_{i+1}^k$ for $i=\{1,\dots,T\}, j=\{1,\dots,b_k\}$ by denoting $w_{T+1}^k \equiv x^k$.

\STATE 2. Set
\begin{align}
x^{k+1}  &= (1 - \tau_k)x^k + \tau_k u^k, \label{def_xk}\\
z^{k+1}  &= (1 - \tau_k)z^k + \tau_k \prod_{i=1}^TJ_{T+1-i}^{k+1}, \label{def_zk}\\
w_i^{k+1} &= (1 - \tau_k)w_i^k + \tau_k \bar G_i^{k+1}, \qquad 1 \le i \le T,\label{def_wi}
\end{align}
where
\beq\label{def_barG}
\bar G_i^{k+1} = \frac{1}{b_k}\sum_{j=1}^{b_k} G_{i,j}^{k+1}.
\eeq
    \ENDFOR
    \STATE \textbf{Output:} $(x^R, z^R, w_1^R, \ldots, w_T^R)$.
\end{algorithmic}
\label{alg:originalnasa}
\end{algorithm}

We now add a few remarks about \cref{alg:originalnasa}. First, note that at each iteration of this algorithm, we update the triple $(x^k, \{w^k\}_{i=1}^T, z^k)$, which are the convex combinations of the solutions to subproblem \eqnok{def_uk}, the estimates of inner function values $f_i$, and the stochastic gradient of $F$ at these points, respectively. It should be mentioned that we do not need to estimate the values of the outer function $f_1$. However, we include $w^k_1$ for the sake of completeness. Second, when $T=2$ and $b_k=1$, this algorithm reduces to the NASA algorithm presented in \cite{GhaRuswan20}. Indeed, \cref{alg:originalnasa} is a direct generalization of the NASA method to the multi-level case $T \ge 3$. However, to prove convergence of \cref{alg:originalnasa}, we need to take a batch of samples in each iteration to reduce the noise associated with estimation of the inner function values, when $T >2$. We now provide our convergence analysis for~\cref{alg:originalnasa}. To do so, we define the following filtration, $$\mathscr{F}_k:=\sigma(\{x^0,\ldots, x^k, z^0,\ldots, z^k, w_1^0,\ldots,w_1^k, \dots, w_T^0, \dots, w_T^k, u^0,\ldots, u^k\}).$$
Next, we state our main assumptions on the individual functions and the stochastic first-order oracle we use.

\begin{assumption}\label{fi_lips}
All functions $f_1,\dots,f_T$ and their derivatives are Lipschitz continuous with Lipschitz constants $L_{f_i}$ and $L_{\nabla f_i}$, respectively.
\end{assumption}
\begin{assumption}\label{assumption:original_nasa_assumption}
Denote $w_{T+1}^k \equiv x^k$. For each $k$, $w_{i+1}^k$ being the input, the stochastic oracle outputs $G_{i}^{k+1} \in \mathbb{R}^{d_i}$ and $J_{i}^{k+1} \in \mathbb{R}^{d_i \times d_{i-1}}$ such that
\begin{enumerate}
\item For $i \in \{1,\ldots,T\}$, we have $\mathbb{E}[J_{i}^{k+1}| \mathscr{F}_k] = \nabla f_i(w_{i+1}^k),~\text{and}~~\mathbb{E}[G_{i}^{k+1}| \mathscr{F}_k] = f_i(w_{i+1}^k)$.
\item For $i \in \{1,\ldots,T\}$, we have $\mathbb{E}[\| G_i^{k+1} - f_i(w_{i+1}^k) \|^2 | \mathscr{F}_k] \leq \sigma_{G_i}^2,\mathbb{E}[\| J_i^{k+1} - \nabla f_i(w_{i+1}^k) \|^2 | \mathscr{F}_k] \leq \hat \sigma_{J_i}^2,~\text{and}~~\mathbb{E}[\|J_i^{k+1}\|^2 | \mathscr{F}_k] \leq \sigma_{J_i}^2$. Here $\|\cdot\|$ denotes the Euclidean norm for vectors and the Frobenius norm for matrices.
\item Given $\mathscr{F}_k$, the outputs of the stochastic oracle at each level $i$, $G_{i}^{k+1}$ and $J_{i}^{k+1}$, are independent.
\item Given $\mathscr{F}_k$, the outputs of the stochastic oracle are independent between levels i.e., $\{G_{i}^{k+1}\}_{i=1,\ldots,T}$ are independent and so are $\{J_{i}^{k+1}\}_{i=1,\ldots,T}$.

\end{enumerate}

\end{assumption}
\cref{fi_lips} is a standard smoothness assumption made in the literature on nonlinear optimization. Similarly, Parts 1 and 2 in~\cref{assumption:original_nasa_assumption} are standard unbiasedness and bounded variance assumptions on the stochastic gradient,  common in the literature. At this point, we re-emphasize that the assumptions made in~\cite{zhang2019multi} are stronger than our assumptions above, as they require mean-square smoothness of the individual random functions $G_i$ and their gradients. Parts 3 and 4 are also essential to establish the convergence results in the multi-level case; similar assumptions have been made, for example, in~\cite{yang2019multi-level}. In the next couple of technical results, we provide some properties of composite functions that are required for our subsequent results.


\begin{lemma} \label{F_Lipschitz_Constant}
Define $F_i(x) = f_i \circ f_{i+1} \circ \cdots f_T(x)$. Under~\cref{fi_lips}, the gradient of $F_i$ is Lipschitz continuous with constant
\[
L_{\nabla F_i} = \sum_{j=i}^T \left[L_{\nabla f_j} \prod_{l=i}^{j-1} L_{f_l} \prod_{l=j+1}^T L_{f_l}^2\right].
\]

\begin{proof}
We show the result by backward induction. Under~\cref{fi_lips}, gradient of $F_T=f_T$ is Lipschitz continuous and so is that of $F_{T-1}$ since for any $x, y \in X$, we have
\begin{align*}
    \| \nabla F_{T-1}(x) - \nabla F_{T-1}(y)\|  &= \|\nabla f_T(x)\nabla f_{T-1}(f_T(x)) - \nabla f_T(y)\nabla f_{T-1}(f_T(y))\| \\ 
    & \leq \| \nabla f_T(x)\| \| \nabla f_{T-1}(f_T(x)) - \nabla f_{T-1}(f_T(y))\| \\
    &+ \| \nabla f_{T-1}(f_T(y))\| \| \nabla f_T(x) - \nabla f_T(y)\| \\ & \leq (L_{f_T}^2L_{\nabla f_{T-1}} + L_{f_{T-1}}L_{\nabla f_T})\|x -y\|.
\end{align*}
Now, suppose that gradient of $F_{i+1}$ is Lipschitz continuous for any $i \le T-1$. Then, similar to the above relation, $\nabla F_i$ is Lipschitz continuous with constant
\begin{align*}
L_{\nabla F_i} =& L_{F_{i+1}}^2 L_{\nabla f_i} + L_{f_i}L_{\nabla F_{i+1}}\\
= &L_{\nabla f_i} \prod_{j=i+1}^TL_{f_j}^2  + L_{f_i}\sum_{j=i+1}^T \left[L_{\nabla f_j} \prod_{l=i+1}^{j-1} L_{f_l} \prod_{l=j+1}^T L_{f_l}^2\right] \\
=&\sum_{j=i}^T \left[L_{\nabla f_j} \prod_{l=i}^{j-1} L_{f_l} \prod_{l=j+1}^T L_{f_l}^2\right].
\end{align*}
\end{proof}
\end{lemma}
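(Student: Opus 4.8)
The plan is to prove the formula for $L_{\nabla F_i}$ by backward induction on $i$, running from $i=T$ down to $i=1$. The base case is $i=T$: here $F_T=f_T$, and \cref{fi_lips} directly gives that $\nabla F_T$ is Lipschitz with constant $L_{\nabla f_T}$, which matches the claimed formula since the single term $j=T$ reduces to $L_{\nabla f_T}$ (the two products being empty). For the inductive step, I would first establish the recursion $L_{\nabla F_i} = L_{F_{i+1}}^2 L_{\nabla f_i} + L_{f_i} L_{\nabla F_{i+1}}$, where $L_{F_{i+1}} = \prod_{j=i+1}^T L_{f_j}$ is the Lipschitz constant of $F_{i+1}$ itself (an easy consequence of the chain rule and \cref{fi_lips}, since a composition of Lipschitz maps is Lipschitz with the product of the constants).

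To derive the recursion, I would write $\nabla F_i(x) = \nabla F_{i+1}(x)\, \nabla f_i(F_{i+1}(x))$ (transpose-Jacobian convention, as in the paper), add and subtract the cross term $\nabla F_{i+1}(x)\,\nabla f_i(F_{i+1}(y))$, and apply the triangle inequality together with sub-multiplicativity of the norm. One term is bounded using that $\|\nabla F_{i+1}(x)\| \le L_{F_{i+1}}$ and that $\nabla f_i$ is $L_{\nabla f_i}$-Lipschitz composed with the $L_{F_{i+1}}$-Lipschitz map $F_{i+1}$, giving $L_{F_{i+1}}^2 L_{\nabla f_i}\|x-y\|$; the other term uses $\|\nabla f_i(F_{i+1}(y))\| \le L_{f_i}$ and the inductive hypothesis that $\nabla F_{i+1}$ is $L_{\nabla F_{i+1}}$-Lipschitz, giving $L_{f_i} L_{\nabla F_{i+1}}\|x-y\|$. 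This is exactly the displayed computation for the $i=T-1$ case, just written at a general level.

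The remaining work is purely algebraic: substitute $L_{F_{i+1}}^2 = \prod_{j=i+1}^T L_{f_j}^2$ and the inductive formula for $L_{\nabla F_{i+1}}$ into the recursion, and check that the result telescopes into $\sum_{j=i}^T [L_{\nabla f_j} \prod_{l=i}^{j-1} L_{f_l} \prod_{l=j+1}^T L_{f_l}^2]$. Concretely, the first term $L_{\nabla f_i}\prod_{j=i+1}^T L_{f_j}^2$ is precisely the $j=i$ summand (empty product $\prod_{l=i}^{i-1}$ equals $1$), and multiplying each $j$-summand of $L_{\nabla F_{i+1}}$ by $L_{f_i}$ converts $\prod_{l=i+1}^{j-1} L_{f_l}$ into $\prod_{l=i}^{j-1} L_{f_l}$, producing exactly the summands for $j=i+1,\dots,T$.

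I do not expect a genuine obstacle here; the only points requiring minor care are bookkeeping of empty-product conventions at the endpoints of the index ranges, the implicit use of the sub-multiplicative matrix norm from \cref{assumption:original_nasa_assumption} (or just the operator norm), and the fact that the relevant arguments $F_{i+1}(x)$ stay in the domain so that \cref{fi_lips} applies along the composition. If one wants to be fully rigorous about the norm bounds $\|\nabla f_i(\cdot)\| \le L_{f_i}$, one notes this follows from $f_i$ being $L_{f_i}$-Lipschitz. Everything else is the routine induction already sketched in the paper's proof.
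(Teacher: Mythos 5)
Your proposal is correct and follows essentially the same route as the paper: backward induction with the recursion $L_{\nabla F_i} = L_{F_{i+1}}^2 L_{\nabla f_i} + L_{f_i} L_{\nabla F_{i+1}}$ obtained by adding and subtracting the cross term, followed by the same telescoping algebra. The only difference is that you write out the general inductive step explicitly where the paper only displays the $i=T-1$ case and appeals to ``similar to the above relation,'' which is a matter of presentation rather than substance.
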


We remark that the above result has also been proved in~\cite{zhang2019multi}, Lemma 5.2., with a slightly different proof.

\begin{lemma} \label{new_nasa_grad_F_z_hat_error}
Define $F_i(x) = f_i \circ f_{i+1} \circ \cdots f_T(x)$ and the gradient term \\
$\nabla \bar f_i(x,\bar w_i) = \nabla f_T(x) \nabla f_{T-1}(w_T)\cdots\nabla f_i(w_{i+1})$ with $\bar w_i = (w_{i+1},\ldots, w_T)$ for any $x \in X, w_j \in \bbr^{d_j} \ \  j=i+1,\ldots, T$. Then under~\cref{fi_lips}, we have
\begin{align*}
    \|\nabla F_i(x) - \nabla \bar f_i(x,\bar w_i)\| \leq \sum_{j=i}^{T-1}\frac{L_{\nabla f_j}}{L_{f_j}}L_{f_i} \cdots L_{f_T}\|F_{j+1}(x) - w_{j+1} \|.
\end{align*}
\end{lemma}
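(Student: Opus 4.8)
The plan is to prove this by backward induction on $i$, using a telescoping argument that peels off one Jacobian factor at a time. Write $\nabla F_i(x) = \nabla f_T(x)\nabla f_{T-1}(F_T(x)) \cdots \nabla f_i(F_{i+1}(x))$ (recalling $F_{j+1}(x) = f_{j+1}\circ\cdots\circ f_T(x)$ is exactly the argument at which the true gradient evaluates $\nabla f_j$), while $\nabla \bar f_i(x,\bar w_i) = \nabla f_T(x)\nabla f_{T-1}(w_T)\cdots\nabla f_i(w_{i+1})$ plugs in the surrogates $w_{j+1}$ instead of $F_{j+1}(x)$. The two products share the first factor $\nabla f_T(x)$, so I would first split off the difference in the leading $\nabla f_{T-1}$-factor: add and subtract the mixed product $\nabla f_T(x)\nabla f_{T-1}(F_T(x))\nabla f_{T-2}(w_{T-1})\cdots\nabla f_i(w_{i+1})$, bounding one term by $L_{f_T} L_{\nabla f_{T-1}} \|F_T(x) - w_T\| \cdot (\text{product of the remaining Jacobian norms})$ and recognizing the other term as $\nabla \bar f_{i}$ with one more "correct" argument, to which the induction hypothesis applies.

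Concretely, the cleanest organization is: base case $i = T$ is trivial (both sides equal $\nabla f_T(x)$, difference is $0$, and the sum on the right is empty). For the inductive step, assume the bound for $i+1$. Factor $\nabla F_i(x) = \nabla F_{i+1}(x)\,\nabla f_i(F_{i+1}(x))$ and $\nabla \bar f_i(x,\bar w_i) = \nabla \bar f_{i+1}(x, \bar w_{i+1})\,\nabla f_i(w_{i+1})$. Then
\begin{align*}
\|\nabla F_i(x) - \nabla \bar f_i(x,\bar w_i)\| &\le \|\nabla F_{i+1}(x)\|\,\|\nabla f_i(F_{i+1}(x)) - \nabla f_i(w_{i+1})\| \\
&\quad + \|\nabla f_i(w_{i+1})\|\,\|\nabla F_{i+1}(x) - \nabla \bar f_{i+1}(x,\bar w_{i+1})\|.
\end{align*}
For the first term, bound $\|\nabla F_{i+1}(x)\| \le \prod_{l=i+1}^T L_{f_l}$ (submultiplicativity plus~\cref{fi_lips}) and $\|\nabla f_i(F_{i+1}(x)) - \nabla f_i(w_{i+1})\| \le L_{\nabla f_i}\|F_{i+1}(x) - w_{i+1}\|$; multiplying gives $L_{\nabla f_i}\prod_{l=i+1}^T L_{f_l}\,\|F_{i+1}(x)-w_{i+1}\|$, which matches the $j = i$ summand after writing $L_{\nabla f_i}\prod_{l=i+1}^T L_{f_l} = \frac{L_{\nabla f_i}}{L_{f_i}} L_{f_i}\cdots L_{f_T}$. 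For the second term, bound $\|\nabla f_i(w_{i+1})\| \le L_{f_i}$ and apply the induction hypothesis; the factor $L_{f_i}$ absorbs into the $L_{f_i}\cdots L_{f_T}$ prefactor of each remaining summand $j = i+1, \dots, T-1$, so the two contributions combine into $\sum_{j=i}^{T-1}\frac{L_{\nabla f_j}}{L_{f_j}} L_{f_i}\cdots L_{f_T}\|F_{j+1}(x) - w_{j+1}\|$, as claimed.

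The step I expect to require the most care is the bookkeeping of the product constants — in particular checking that the prefactors telescope correctly when the $L_{f_i}$ from the second term is pulled inside the sum, and that the index ranges in the product $L_{f_i}\cdots L_{f_T}$ are handled consistently at the boundary (the $j = i$ term coming from the "new" Jacobian mismatch versus the inherited $j \ge i+1$ terms). There is also a mild notational subtlety: the division $L_{\nabla f_j}/L_{f_j}$ presumes $L_{f_j} > 0$, so I would note that one may assume each $L_{f_j} > 0$ without loss of generality (otherwise $f_j$ is constant and the problem degenerates), or simply read $\frac{L_{\nabla f_j}}{L_{f_j}}L_{f_i}\cdots L_{f_T}$ as the single quantity $L_{\nabla f_j}\prod_{l=i, l\ne j}^T L_{f_l}$. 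Everything else is a routine application of the triangle inequality, submultiplicativity of the matrix norm, and~\cref{fi_lips}.
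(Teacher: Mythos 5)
Your proof is correct and follows essentially the same route as the paper's: backward induction on $i$ with the factorization $\nabla F_i = \nabla F_{i+1}\cdot\nabla f_i(F_{i+1}(x))$, a product-difference split, and the bounds $\|\nabla f_i\|\le L_{f_i}$, $\|\nabla F_{i+1}\|,\|\nabla\bar f_{i+1}\|\le\prod_{l=i+1}^T L_{f_l}$. The only (immaterial) difference is which mixed product you add and subtract — you pair $\nabla F_{i+1}(x)$ with $\nabla f_i(w_{i+1})$ whereas the paper pairs $\nabla\bar f_{i+1}$ with $\nabla f_i(F_{i+1}(x))$ — and both choices yield identical constants.
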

\begin{proof}
We show the result by backward induction. The case $i=T$ is trivial. When $i=T-1$, under~\cref{fi_lips}, we have
\begin{align*}
    \| \nabla F_{T-1}(x) - \nabla f_T(x)\nabla f_{T-1}(w_T)\| & = \| \nabla f_T(x) [\nabla f_{T-1}(f_T(x))- \nabla f_{T-1}(w_T)]\| \\
    &\leq L_{\nabla f_{T-1}}L_{f_T}\|f_T(x) - w_T\|.
\end{align*}
Now assume that for any $i \le T-2$,
\begin{align*}
    \|\nabla F_{i+1}(x) - \nabla \bar f_{i+1}(x,\bar w_{i+1} ) \| \leq \sum_{j=i+1}^{T-1}\frac{L_{\nabla f_j}}{L_{f_j}}L_{f_{i+1}} \cdots L_{f_T}\| F_{j+1}(x) - w_{j+1} \|.
\end{align*}
We then have
\begin{align*}
&\|\nabla F_i(x) - \nabla \bar f_i(x,\bar w_i ) \|
    = \|\nabla F_{i+1}(x)\nabla f_i(F_{i+1}(x)) - \nabla \bar f_i(x, \bar w_i )\|\\
&\le \|\nabla f_i(F_{i+1}(x))\| \|\nabla F_{i+1}(x) - \nabla \bar f_{i+1}(x, \bar w_{i+1} )\|\\
&+\|\nabla \bar f_{i+1}(x, \bar w_{i+1})\|\|\nabla f_i(F_{i+1}(x))-\nabla f_i(w_{i+1})\|
\\
&\le L_{f_i} \|\nabla F_{i+1}(x) - \nabla \bar f_{i+1}(x, \bar w_{i+1})\|+L_{\nabla f_i} L_{f_{i+1}} \cdots L_{f_T} \|F_{i+1}(x)-w_{i+1}\|\\
&\le L_{f_i} \sum_{j=i+1}^{T-1}\frac{L_{\nabla f_j}}{L_{f_j}}L_{f_{i+1}} \cdots L_{f_T}\| F_{j+1}(x) - w_{j+1} \|\\
&+L_{\nabla f_i} L_{f_{i+1}} \cdots L_{f_T} \|F_{i+1}(x)-w_{i+1}\|=\sum_{j=i}^{T-1}\frac{L_{\nabla f_j}}{L_{f_j}}L_{f_i} \cdots L_{f_T}\| F_{j+1}(x) - w_{j+1} \|.
\end{align*}
%
%
\end{proof}

\begin{lemma} \label{new_nasa_fjf_T_wj_error}
Under~\cref{fi_lips}, for any $ j \in \{1,\ldots, T-1\}$, we have
\begin{align*}
    \|f_j \circ \cdots \circ f_T(x) - w_j\| & \leq \|f_j(w_{j+1}) - w_j\| + \sum_{\ell = j+1}^T\left(\prod_{i = j}^{\ell - 1}L_{f_i}\right)\|f_{\ell}(w_{\ell+1}) - w_{\ell}\|.
\end{align*}
\end{lemma}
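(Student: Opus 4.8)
The plan is to establish this by backward induction on $j$, telescoping the nested composition one layer at a time, exactly in the style of the preceding two lemmas. Write $F_\ell(x) = f_\ell \circ \cdots \circ f_T(x)$ as in \cref{new_nasa_grad_F_z_hat_error}, so the claim reads $\|F_j(x) - w_j\| \le \|f_j(w_{j+1}) - w_j\| + \sum_{\ell=j+1}^T \left(\prod_{i=j}^{\ell-1} L_{f_i}\right)\|f_\ell(w_{\ell+1}) - w_\ell\|$.

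For the base case $j = T-1$: insert $f_{T-1}(w_T)$ as an intermediate term and use the triangle inequality, $\|F_{T-1}(x) - w_{T-1}\| \le \|f_{T-1}(w_T) - w_{T-1}\| + \|f_{T-1}(f_T(x)) - f_{T-1}(w_T)\|$, then bound the second term by $L_{f_{T-1}}\|f_T(x) - w_T\|$ using \cref{fi_lips}. Since $f_T(x) = F_T(x)$ and $\|f_T(x) - w_T\| = \|f_T(w_{T+1}) - w_T\|$ with $w_{T+1} \equiv x$, this matches the claimed bound with a single summand $\ell = T$.

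For the inductive step, suppose the bound holds for $j+1$. Then split
\[
\|F_j(x) - w_j\| = \|f_j(F_{j+1}(x)) - w_j\| \le \|f_j(w_{j+1}) - w_j\| + \|f_j(F_{j+1}(x)) - f_j(w_{j+1})\| \le \|f_j(w_{j+1}) - w_j\| + L_{f_j}\|F_{j+1}(x) - w_{j+1}\|,
\]
and substitute the induction hypothesis for $\|F_{j+1}(x) - w_{j+1}\|$. Multiplying that hypothesis through by $L_{f_j}$ turns $\|f_{j+1}(w_{j+2}) - w_{j+1}\|$ into the $\ell = j+1$ term of the target sum (coefficient $L_{f_j} = \prod_{i=j}^{j} L_{f_i}$), and turns each $\left(\prod_{i=j+1}^{\ell-1} L_{f_i}\right)\|f_\ell(w_{\ell+1}) - w_\ell\|$ into $\left(\prod_{i=j}^{\ell-1} L_{f_i}\right)\|f_\ell(w_{\ell+1}) - w_\ell\|$ for $\ell = j+2, \dots, T$; combined with the standalone $\|f_j(w_{j+1}) - w_j\|$ term this is exactly the claimed inequality for index $j$.

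I do not anticipate a genuine obstacle here — the argument is a routine telescoping induction and the only thing to be careful about is bookkeeping of the product indices (empty products equal $1$, and the reindexing $\prod_{i=j+1}^{\ell-1} \mapsto \prod_{i=j}^{\ell-1}$ after multiplying by $L_{f_j}$). One minor point worth stating explicitly is the convention $w_{T+1} \equiv x$, which makes the $\ell = T$ summand in the base case read uniformly as $\|f_T(w_{T+1}) - w_T\|$, consistent with the notation used elsewhere in the algorithm description.
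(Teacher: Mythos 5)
Your proposal is correct and follows essentially the same route as the paper's proof: a backward induction starting at $j = T-1$, splitting off $f_j(w_{j+1})$ by the triangle inequality, bounding the remaining difference by $L_{f_j}\|F_{j+1}(x) - w_{j+1}\|$ via \cref{fi_lips}, and reindexing the products after applying the induction hypothesis. The bookkeeping details you flag (the convention $w_{T+1} \equiv x$ and the shift $\prod_{i=j+1}^{\ell-1} \mapsto \prod_{i=j}^{\ell-1}$) are handled identically in the paper.
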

\begin{proof}
We show the results by backward induction. For $j = T-1$, we have
\begin{align*}
    &\|f_{T-1} \circ f_T(w_{T+1}) - w_{T-1} \| \\
    &\leq \| f_{T-1} \circ f_T(w_{T+1}) - f_{T-1}(w_T) \| + \|f_{T-1}(w_T) - w_{T-1}\| \\ & \leq L_{f_{T-1}}\|f_T(w_{T+1}) - w_T\| + \| f_{T-1}(w_T) - w_{T-1}\|.
\end{align*}
Now suppose the result holds for $j+1, j \in \{1,\ldots, T-2\}$. Then, we have
\begin{align*}
  &  \| f_j \circ f_{j+1} \circ \cdots f_T(w_{T+1}) - w_j\| \\
   \leq & \|f_{j} \circ \cdots f_T(w_{T+1}) - f_j(w_{j+1}) + f_j(w_{j+1}) - w_j\| \\
    \leq & L_{f_j}\|f_{j+1} \circ \cdots \circ f_T(w_{T+1}) - w_{j+1}\| + \| f_j(w_{j+1}) - w_j\| \\ 
     \leq & L_{f_j}\left[\| f_{j+1}(w_{j+2}) - w_{j+1}\| + \sum_{\ell = j+2}^T\left(\prod_{i=j+1}^{\ell -1}L_{f_i}\right)\|f_{\ell}(w_{\ell+1}) - w_{\ell}\|\right] \\ & + \|f_j(w_{j+1}) - w_j\| \\  = & \|f_j(w_{j+1}) - w_j\| + \sum_{\ell = j+1}^T\left(\prod_{i=j}^{\ell-1}L_{f_i}\right)\|f_{\ell}(w_{\ell+1}) - w_{\ell}\|,
\end{align*}
where the third inequality follows by the induction hypothesis.
\end{proof}

\begin{lemma} \label{FZ_difference}
Define
\begin{align*}
    R_1 & = L_{\nabla f_1}L_{f_2} \cdots L_{f_T}, \qquad R_j = L_{f_1} \dots L_{f_{j-1}}L_{\nabla f_j}L_{f_{j+1}} \cdots L_{f_T}/L_{f_j}  \quad 2 \le j \leq T-1, \\ 
    C_2  &= R_1, \quad C_j  = \sum_{i=1}^{j-2} R_i \left(\prod_{l=i+1}^{j-1}L_{f_l}\right) \quad 3 \le j \leq T.
\end{align*}
Assume that~\cref{fi_lips} holds. Then for $T \geq 3$,
\beq
    \left\| \nabla F(x) - \nabla f_T(x) \prod_{i=2}^T \nabla f_{T+1-i}(w_{T+2-i}) \right\| \leq  \sum_{j=2}^{T-1} C_j\|f_j(w_{j+1}) - w_j\| + C_T\|f_T(x) - w_T\|
\eeq
\end{lemma}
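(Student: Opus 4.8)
The plan is to obtain \cref{FZ_difference} by chaining together the two preceding lemmas rather than redoing the backward induction. The first step is to recognize that the matrix product appearing in the statement is exactly the surrogate Jacobian product of \cref{new_nasa_grad_F_z_hat_error}. Reindexing $i\mapsto T+1-i$ in the product,
\[
\nabla f_T(x)\prod_{i=2}^T\nabla f_{T+1-i}(w_{T+2-i})=\nabla f_T(x)\,\nabla f_{T-1}(w_T)\,\nabla f_{T-2}(w_{T-1})\cdots\nabla f_1(w_2),
\]
and the right-hand side is precisely $\nabla\bar f_1(x,\bar w_1)$ with $\bar w_1=(w_2,\dots,w_T)$ (using the convention $w_{T+1}\equiv x$). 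Hence the left-hand side of \cref{FZ_difference} is $\|\nabla F(x)-\nabla\bar f_1(x,\bar w_1)\|$ (here $F\equiv F_1$), and \cref{new_nasa_grad_F_z_hat_error} with $i=1$ gives at once
\[
\Big\|\nabla F(x)-\nabla f_T(x)\prod_{i=2}^T\nabla f_{T+1-i}(w_{T+2-i})\Big\|\le\sum_{j=1}^{T-1}R_j\,\|F_{j+1}(x)-w_{j+1}\|,\qquad R_j=\frac{L_{\nabla f_j}}{L_{f_j}}\prod_{\ell=1}^{T}L_{f_\ell},
\]
which is the constant $R_j$ from the statement.

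The second step is to replace each residual $\|F_{j+1}(x)-w_{j+1}\|=\|f_{j+1}\circ\cdots\circ f_T(x)-w_{j+1}\|$ by a sum of one-step residuals $\|f_\ell(w_{\ell+1})-w_\ell\|$. For $j=T-1$ this is immediate: $F_T=f_T$ and $w_{T+1}=x$, so $\|F_T(x)-w_T\|=\|f_T(x)-w_T\|$. For $1\le j\le T-2$, apply \cref{new_nasa_fjf_T_wj_error} with its running index taken to be $j+1$ (permitted since $2\le j+1\le T-1$):
\[
\|F_{j+1}(x)-w_{j+1}\|\le\|f_{j+1}(w_{j+2})-w_{j+1}\|+\sum_{\ell=j+2}^T\Big(\prod_{i=j+1}^{\ell-1}L_{f_i}\Big)\|f_\ell(w_{\ell+1})-w_\ell\|.
\]
Substituting these into the bound from the first step and interchanging the order of the resulting double summation, I would then collect, for each $m\in\{2,\dots,T\}$, the total coefficient multiplying $\|f_m(w_{m+1})-w_m\|$ — the index $m=1$ never occurs (so $w_1$ is irrelevant, as it should be), while $m=T$ corresponds to $\|f_T(x)-w_T\|$ via $w_{T+1}=x$ — and check that this coefficient equals $C_m$, which finishes the proof.

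The only genuine work is the index bookkeeping in the second step: one must enumerate which pairs $(j,\ell)$ from the double sum contribute to a given residual index $m$, and verify that the products of Lipschitz constants accumulated along the way telescope into the stated factors $\prod_{l=i+1}^{m-1}L_{f_l}$ of $C_m$; there is no further analytic content. Should a self-contained argument be preferred, the same estimate follows by backward induction on the number of levels, peeling off the leading factor $\nabla f_j$ at each stage via $AB-A'B'=A(B-B')+(A-A')B'$, using $\|\nabla f_j\|\le L_{f_j}$ together with the Lipschitz continuity of $\nabla f_j$, and then invoking \cref{new_nasa_fjf_T_wj_error} — essentially the pattern by which \cref{new_nasa_grad_F_z_hat_error} is itself established.
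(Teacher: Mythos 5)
Your proposal is correct and follows essentially the same route as the paper's own proof: apply \cref{new_nasa_grad_F_z_hat_error} with $i=1$ to get the bound $\sum_{j=1}^{T-1}R_j\|F_{j+1}(x)-w_{j+1}\|$, handle $j=T-1$ directly via $F_T=f_T$, expand the remaining residuals with \cref{new_nasa_fjf_T_wj_error}, and aggregate coefficients. One note for when you carry out the deferred bookkeeping: the coefficient of $\|f_m(w_{m+1})-w_m\|$ comes out to $\sum_{i=1}^{m-1}R_i\prod_{l=i+1}^{m-1}L_{f_l}$ (the $i=m-1$ term carrying an empty product, i.e.\ contributing $R_{m-1}$), so the stated upper limit $j-2$ in the definition of $C_j$ appears to be an off-by-one typo for $j-1$ --- the latter is also what makes the general formula consistent with the separately declared value $C_2=R_1$.
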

\begin{proof}
By~\cref{new_nasa_grad_F_z_hat_error} and \cref{new_nasa_fjf_T_wj_error}, we have
\begin{align*}
    &  \left\| \nabla F(x) - \nabla f_T(x) \prod_{i=2}^T \nabla f_{T+1-i}(w_{T+2-i}) \right\| \leq \sum_{j=1}^{T-1}R_j\|f_{j+1} \circ \cdots \circ f_T(x) - w_{j+1}\| \\ & = \sum_{j=1}^{T-2}R_j\|f_{j+1} \circ \cdots \circ f_T(x) - w_{j+1}\| + R_{T-1}\|f_T(x) - w_T\| \\ & = \sum_{j=1}^{T-2}R_j\|f_{j+1}(w_{j+2}) - w_{j+1}\| + \sum_{j=1}^{T-2}R_j\sum_{\ell = j+2}^T\left(\prod_{i=j+1}^{\ell-1}L_{f_i}\right)\|f_{\ell}(w_{\ell +1}) -w_{\ell}\| \\
    &+ R_{T-1}\|f_T(x) - w_T\|.
\end{align*}
{\color{black}
Aggregating the constants for $\|f_j(w_{j+1})-w_j\|$, we get the result.}
\end{proof}

\vgap







\textcolor{black}{The following result also shows the Lipschitz continuity of the gradient of the objective function} of the subproblem \eqnok{def_uk}. One can see \cite{GhaRuswan20} for a simple proof.
\begin{lemma}\label{eta_lips_lemma}
Let $\eta(x,z)$ be defined as
\[
\eta(x,z) = \min_{y \in X}\  \left\{\langle z, y-x \rangle + \frac{\beta}{2} \|y-x\|^2\right\}.
\]
Then the gradient of $\eta$ w.r.t. $(x,z)$ is Lipschitz continuous with the constant \[L_{\nabla \eta} = 2\sqrt{(1+\beta)^2+(1+\tfrac{1}{2\beta})^2}.\]
\end{lemma}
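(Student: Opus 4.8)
The plan is to compute the gradient of $\eta(x,z)$ explicitly and then estimate the Lipschitz constant of each block of the gradient. First I would observe that the minimization defining $\eta$ is over the closed convex set $X$ of a strongly convex quadratic in $y$; its minimizer is precisely $u^*(x,z) = \Pi_X\!\big(x - z/\beta\big)$, the Euclidean projection onto $X$. By Danskin's theorem (or a direct argument using optimality of $u^*$), $\eta$ is differentiable in $(x,z)$ with
\[
\nabla_x \eta(x,z) = -z + \beta\,(x - u^*(x,z)), \qquad \nabla_z \eta(x,z) = u^*(x,z) - x .
\]
Indeed, writing $\phi(y) = \langle z, y-x\rangle + \tfrac{\beta}{2}\|y-x\|^2$, the envelope theorem gives $\nabla_x\eta = \partial_x\phi(u^*) = -z - \beta(u^*-x)$ — wait, sign: $\partial_x\phi = -z - \beta(y-x)\big|_{y=u^*} = -z + \beta(x-u^*)$, and $\nabla_z\eta = \partial_z\phi(u^*) = u^*-x$, the extra terms from the dependence of $u^*$ on $(x,z)$ vanishing by first-order optimality of $u^*$ over $X$.

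Next I would bound $\|\nabla\eta(x,z) - \nabla\eta(x',z')\|$. The key tool is nonexpansiveness of the projection: $\|u^*(x,z) - u^*(x',z')\| \le \|(x - z/\beta) - (x' - z'/\beta)\| \le \|x-x'\| + \tfrac1\beta\|z-z'\|$. Using this on each block:
\[
\|\nabla_x\eta(x,z)-\nabla_x\eta(x',z')\| \le \|z-z'\| + \beta\|x-x'\| + \beta\|u^*-u^{*\prime}\| \le (1+\beta)\|x-x'\| + \big(1+\tfrac1{2\beta}\big)\cdot 2\|z-z'\|,
\]
and similarly $\|\nabla_z\eta(x,z)-\nabla_z\eta(x',z')\| \le \|u^*-u^{*\prime}\| + \|x-x'\| \le 2\|x-x'\| + \tfrac1\beta\|z-z'\|$; one arranges the coefficients so that both blocks are dominated by the vector $\big((1+\beta)\|x-x'\|,\ (1+\tfrac1{2\beta})\|z-z'\|\big)$ up to an overall factor. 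Combining the two block estimates (treating $(\nabla_x\eta,\nabla_z\eta)$ as a stacked vector and $(x,z),(x',z')$ likewise) yields $\|\nabla\eta - \nabla\eta'\| \le 2\sqrt{(1+\beta)^2 + (1+\tfrac1{2\beta})^2}\,\|(x,z)-(x',z')\|$, which is the claimed $L_{\nabla\eta}$.

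The main obstacle — really the only subtle point — is justifying differentiability of $\eta$ and the exact formula for $\nabla\eta$ when $X$ is a general closed convex set (so $u^*$ need not be interior and the constraint may be active): one must invoke Danskin's/the envelope theorem carefully, checking that the terms involving $\partial u^*/\partial x$ and $\partial u^*/\partial z$ drop out by the variational inequality characterizing $u^*$. After that, the Lipschitz estimate is a routine application of projection nonexpansiveness together with the elementary inequality $\sqrt{a^2+b^2}+\sqrt{c^2+d^2} \le 2\sqrt{\max(a,c)^2 + \max(b,d)^2}$-type bookkeeping to collect the constants into the stated form; since the excerpt points to \cite{GhaRuswan20} for a simple proof, I would keep this part terse.
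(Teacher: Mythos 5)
The paper does not actually prove this lemma; it defers to \cite{GhaRuswan20}, and your overall strategy (identify the minimizer as $u^*(x,z)=\Pi_X(x-z/\beta)$, obtain $\nabla_x\eta=-z+\beta(x-u^*)$ and $\nabla_z\eta=u^*-x$ from the envelope theorem, then use nonexpansiveness of the projection) is exactly the intended route. Your worry about differentiability is also easy to dispatch without Danskin: completing the square gives $\eta(x,z)=\tfrac{\beta}{2}\,\mathrm{dist}^2\!\big(x-\tfrac{z}{\beta},X\big)-\tfrac{1}{2\beta}\|z\|^2$, and the squared distance to a closed convex set is $C^1$ with gradient $2(v-\Pi_X(v))$, which yields your formulas by the chain rule.

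There is, however, one concrete error in the bookkeeping. From the triangle inequality and nonexpansiveness you correctly reach
\[
\|\nabla_x\eta(x,z)-\nabla_x\eta(x',z')\|\le 2\beta\|x-x'\|+2\|z-z'\|,
\qquad
\|\nabla_z\eta(x,z)-\nabla_z\eta(x',z')\|\le 2\|x-x'\|+\tfrac{1}{\beta}\|z-z'\|,
\]
but your displayed claim that the first quantity is bounded by $(1+\beta)\|x-x'\|+(1+\tfrac{1}{2\beta})\cdot 2\|z-z'\|$ is false whenever $\beta>1$, since $2\beta>1+\beta$ there; the coefficients cannot be ``dominated'' termwise in the way you suggest. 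The correct way to reach the stated constant is to apply Cauchy--Schwarz to each block against the \emph{joint} increment: $2\beta\|x-x'\|+2\|z-z'\|\le 2\sqrt{\beta^2+1}\,\|(x,z)-(x',z')\|\le 2(1+\beta)\|(x,z)-(x',z')\|$ and $2\|x-x'\|+\tfrac{1}{\beta}\|z-z'\|\le 2\sqrt{1+\tfrac{1}{4\beta^2}}\,\|(x,z)-(x',z')\|\le 2\big(1+\tfrac{1}{2\beta}\big)\|(x,z)-(x',z')\|$, and then take the Euclidean norm of the stacked gradient (not the sum of the two block norms), which gives $\|\nabla\eta-\nabla\eta'\|\le 2\sqrt{(1+\beta)^2+(1+\tfrac{1}{2\beta})^2}\,\|(x,z)-(x',z')\|$ exactly. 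So the final constant is attainable and your plan is sound, but the intermediate inequality as written does not hold and the last ``$\sqrt{a^2+b^2}+\sqrt{c^2+d^2}$'' combination step should be replaced by the stacked-vector computation above.
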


\vgap
\noindent In the next result, we provide a recursion inequality for the error in estimating $f_i(w_{i+1})$ by $w_i$.

\begin{lemma} \label{nasa_lemma_fn_w_error}
Let $\{x^k\}_{k \ge 0}$ and $\{w^k_i\}_{k \ge 0} \ \ 1 \le i \le T$ be generated by~\cref{alg:originalnasa}. Denote
\beq \label{nasa_A_ki}
d^k = u^k - x^k, \qquad w_{T+1}^k \equiv x^k \quad \forall k \ge 0, \qquad A_{k,i} = f_i(w_{i+1}^{k+1}) - f_i(w_{i+1}^k) \quad 1 \le i \le T.
\eeq
\begin{itemize}[leftmargin=0.1in]
\item [a)] For any $i \in \{1,\ldots, T\}$,
\begin{align}
   & \| f_i(w_{i+1}^{k+1}) - w_i^{k+1}\|^2 \leq (1-\tau_k)\| f_i(w_{i+1}^k) - w_i^k\|^2 + \frac{1}{\tau_k}\|A_{k,i}\|^2 + \tau_k^2\|e_i^{k+1}\|^2 
    + r_i^{k+1},\label{nasa_fn_w}\\
   &\|w_i^{k+1} - w_i^k\|^2  \leq \tau_k^2\left[\| f_i(w_{i+1}^k) - w_i^k\|^2 + \|e_i^{k+1}\|^2 - 2\langle e_i^{k+1},f_i(w_{i+1}^k) - w_i^k\rangle \right],\label{nasa_wi_increment_squared}
\end{align}
where
\beq\label{nasa_ri}
    r_i^{k+1} = 2\tau_k \langle e_i^{k+1}, A_{k,i} + (1-\tau_k)(f_i(w_{i+1}^k) - w_i^k) \rangle, \qquad e_i^{k+1}= f_i(w_{i+1}^k)-\bar G_i^{k+1}.
\eeq
\item [b)] If, in addition, $f_i$'s are Lipschitz continuous, we have
\begin{align}
\| f_T(x^{k+1}) - w_T^{k+1}\|^2 &\leq (1-\tau_k)\| f_T(x^k) - w_T^k\|^2 + L_{f_T} \tau_k \|d^k\|^2 + \tau_k^2\|e_T^{k+1}\|^2 + r_T^{k+1}, \\
    \| f_i(w_{i+1}^{k+1}) - w_i^{k+1}\|^2 & \leq (1-\tau_k)\|f_i(w_{i+1}^k) - w_i^k\|^2 +   \tau_k^2\|e_i^{k+1}\|^2 + \bar{r}_i^{k+1} \notag \\ & 
 + L_{f_i}^2\tau_k \left[\|f_{i+1}(w_{i+2}^k) - w_{i+1}^k\|^2 + \|e_{i+1}^{k+1}\|^2\right]  \qquad   1 \le i \le T-1, \label{nasa_upperbound_fiwi_squared}
\end{align}
where
\begin{align}
    \bar{r}_{i}^{k+1} & = -2\tau_kL_{f_i}^2\langle e_{i+1}^{k+1},f_{i+1}(w_{i+2}^k)-w_{i+1}^k\rangle + r_i^{k+1}. \label{original_nasa_ri_bar}
\end{align}
\end{itemize}
\end{lemma}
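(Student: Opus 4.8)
<br>

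The plan is to prove each of the two estimates in part (a) by a direct expansion of the update rule \eqnok{def_wi}, and then obtain part (b) by combining part (a) with the Lipschitz estimates in Lemmas \ref{F_Lipschitz_Constant}--\ref{new_nasa_fjf_T_wj_error} and the definitions in \eqnok{nasa_A_ki}.

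\textbf{Part (a).} From \eqnok{def_wi} and \eqnok{def_barG}, we have $w_i^{k+1} - w_i^k = \tau_k(\bar G_i^{k+1} - w_i^k)$. I would write $\bar G_i^{k+1} = f_i(w_{i+1}^k) - e_i^{k+1}$ using the definition of $e_i^{k+1}$ in \eqnok{nasa_ri}. First, for \eqnok{nasa_wi_increment_squared}, expanding $\|w_i^{k+1}-w_i^k\|^2 = \tau_k^2 \|(f_i(w_{i+1}^k)-w_i^k) - e_i^{k+1}\|^2$ and using $\|a-b\|^2 = \|a\|^2 + \|b\|^2 - 2\langle a,b\rangle$ gives the claim immediately. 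For \eqnok{nasa_fn_w}, I would write
\[
f_i(w_{i+1}^{k+1}) - w_i^{k+1} = A_{k,i} + f_i(w_{i+1}^k) - (1-\tau_k)w_i^k - \tau_k \bar G_i^{k+1} = A_{k,i} + (1-\tau_k)\bigl(f_i(w_{i+1}^k) - w_i^k\bigr) + \tau_k e_i^{k+1},
\]
where the last step uses $f_i(w_{i+1}^k) - \tau_k \bar G_i^{k+1} = (1-\tau_k) f_i(w_{i+1}^k) + \tau_k e_i^{k+1}$. Squaring and grouping the first two summands, I would expand $\|(A_{k,i} + (1-\tau_k)(f_i(w_{i+1}^k)-w_i^k)) + \tau_k e_i^{k+1}\|^2$, collect the cross terms with $e_i^{k+1}$ into $r_i^{k+1}$, keep $\tau_k^2\|e_i^{k+1}\|^2$, and bound the remaining term $\|A_{k,i} + (1-\tau_k)(f_i(w_{i+1}^k)-w_i^k)\|^2$. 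For this last piece I would use the inequality $\|a+b\|^2 \le (1+\lambda)\|b\|^2 + (1+\lambda^{-1})\|a\|^2$ with $a = A_{k,i}$, $b = (1-\tau_k)(f_i(w_{i+1}^k)-w_i^k)$, and the specific choice $\lambda = \tau_k/(1-\tau_k)$, so that $(1+\lambda)(1-\tau_k)^2 = 1-\tau_k$ and $1+\lambda^{-1} = 1/\tau_k$; this produces exactly $(1-\tau_k)\|f_i(w_{i+1}^k)-w_i^k\|^2 + \tfrac{1}{\tau_k}\|A_{k,i}\|^2$.

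\textbf{Part (b).} Here I would specialize \eqnok{nasa_fn_w}. For the $i=T$ bound, since $w_{T+1}^k \equiv x^k$ and $x^{k+1} = (1-\tau_k)x^k + \tau_k u^k = x^k + \tau_k d^k$ by \eqnok{def_xk} and \eqnok{nasa_A_ki}, Lipschitz continuity of $f_T$ gives $\|A_{k,T}\| = \|f_T(x^{k+1}) - f_T(x^k)\| \le L_{f_T}\tau_k\|d^k\|$, so $\tfrac{1}{\tau_k}\|A_{k,T}\|^2 \le L_{f_T}^2 \tau_k \|d^k\|^2$. (I would note the mild discrepancy between $L_{f_T}$ and $L_{f_T}^2$ in the stated display — presumably $L_{f_T}$ is meant to be $L_{f_T}^2$, or a reparametrization is in play — and carry it through consistently.) For $1 \le i \le T-1$, $\|A_{k,i}\| = \|f_i(w_{i+1}^{k+1}) - f_i(w_{i+1}^k)\| \le L_{f_i}\|w_{i+1}^{k+1} - w_{i+1}^k\|$, and then I would feed in \eqnok{nasa_wi_increment_squared} at level $i+1$: $\tfrac{1}{\tau_k}\|A_{k,i}\|^2 \le \tfrac{L_{f_i}^2}{\tau_k}\|w_{i+1}^{k+1}-w_{i+1}^k\|^2 \le L_{f_i}^2 \tau_k[\|f_{i+1}(w_{i+2}^k)-w_{i+1}^k\|^2 + \|e_{i+1}^{k+1}\|^2 - 2\langle e_{i+1}^{k+1}, f_{i+1}(w_{i+2}^k)-w_{i+1}^k\rangle]$. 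Absorbing the last inner product into $r_i^{k+1}$ gives precisely $\bar r_i^{k+1}$ as defined in \eqnok{original_nasa_ri_bar}, and the rest of the terms match \eqnok{nasa_upperbound_fiwi_squared}.

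\textbf{Main obstacle.} The only delicate point is the choice of the splitting parameter $\lambda$ in the Young-type inequality in part (a): a naive split would leave a $(1-\tau_k)^2$ or an uncontrolled $1/\tau_k^2$ factor rather than the clean $(1-\tau_k)$ and $1/\tau_k$ coefficients that the downstream telescoping argument needs. Getting the contraction factor to be exactly $1-\tau_k$ is what makes the Lyapunov analysis work, so that algebraic bookkeeping — together with correctly tracking which cross terms go into $r_i^{k+1}$ versus $\bar r_i^{k+1}$ — is the crux; everything else is a routine application of Lipschitz continuity and the triangle inequality.
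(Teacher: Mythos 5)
Your proof is correct and follows essentially the same route as the paper's: the same decomposition $f_i(w_{i+1}^{k+1}) - w_i^{k+1} = A_{k,i} + (1-\tau_k)(f_i(w_{i+1}^k)-w_i^k) + \tau_k e_i^{k+1}$, the same Young-type splitting (the paper bounds the cross term $2(1-\tau_k)\langle A_{k,i}, f_i(w_{i+1}^k)-w_i^k\rangle$ directly, which is exactly your $(1+\lambda)$, $(1+\lambda^{-1})$ split with $\lambda = \tau_k/(1-\tau_k)$), and the same Lipschitz bounds on $A_{k,i}$ for part (b). Your observation that the $i=T$ display should carry $L_{f_T}^2\tau_k\|d^k\|^2$ rather than $L_{f_T}\tau_k\|d^k\|^2$ is also right; the paper's own later use of this bound in the merit-function lemma indeed uses $L_{f_T}^2$.
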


\begin{proof}
Noting \cref{def_wi}, \cref{nasa_fn_w}, and \cref{nasa_ri}, we have
\begin{align*}
    \|f_i(w_{i+1}^{k+1}) - w_i^{k+1}\|^2 & = \|A_{k,i} + f_i(w_{i+1}^k) - (1-\tau_k)w_i^k - \tau_k(f_i(w_{i+1}^k) - e_i^{k+1})\|^2 \\ & = \|A_{k,i} + (1-\tau_k)(f_i(w_{i+1}^k) - w_i^k) + \tau_ke_i^{k+1}\|^2 \\ & = \|A_{k,i} +(1-\tau_k)(f_i(w_{i+1}^k)-w_i^k)\|^2 + \tau_k^2\|e_i^{k+1}\|^2 + r_i^{k+1}.
\end{align*}
Then, in the view of \cref{nasa_ri}, \cref{nasa_fn_w} follows by noting that
\begin{align*}
   & \|A_{k,i} +(1-\tau_k)(f_i(w_{i+1}^k)-w_i^k)\|^2 \\
    = & \|A_{k,i}\|^2 + (1-\tau_k)^2\|f_i(w_{i+1}^k)-w_i^k\|^2 + 2(1-\tau_k)\langle A_{k,i}, f_i(w_{i+1}^k) - w_i^k\rangle \nonumber\\
     \leq & \|A_{k,i}\|^2 + (1-\tau_k)^2\|f_i(w_{i+1}^k) - w_i^k\|^2 + \left(\frac{1}{\tau_k}-1\right)\|A_{k,i}\|^2 \nonumber\\ & + (1-\tau_k)\tau_k\|f_i(w_{i+1}^k)-w_i^k\|^2 \nonumber\\
     = & (1-\tau_k)\|f_i(w_{i+1}^k)-w_i^k\|^2 + \frac{1}{\tau_k}\|A_{k,i}\|^2,\label{Ak_fk}
\end{align*}
due to Cauchy-Schwarz and Young's inequalities. Also, \cref{nasa_wi_increment_squared} directly follows from \cref{def_wi} since
\begin{align*}
    \|w_i^{k+1} - w_i^k\|^2 & = \|\tau_k(\bar G_i^{k+1} - w_i^k)\|^2 = \tau_k^2\|f_i(w_{i+1}^k) - w_i^k - e_i^{k+1}\|^2 \\
    & = \tau_k^2\left[\| f_i(w_{i+1}^k) - w_i^k\|^2 + \|e_i^{k+1}\|^2 - 2\langle e_i^{k+1},f_i(w_{i+1}^k) - w_i^k\rangle \right].
\end{align*}
To show part b), note that by \cref{def_xk}, \cref{nasa_A_ki}, and Lipschitz continuity of $f_i$, we have
\[
\| A_{k,T} \| \leq L_{f_T}\| w_{T+1}^{k+1} - w_{T+1}^k\|=L_{f_T} \tau_k \|d^k\|, \qquad \| A_{k,i} \| \leq L_{f_i}\| w_{i+1}^{k+1} - w_{i+1}^k\|,
\]
for $1 \le i \le T-1$. The results then follows by noting \cref{nasa_fn_w} and \cref{nasa_wi_increment_squared}.
\end{proof}


We remark that the mini-batch sampling in \eqnok{def_barG} is only used to reduce the upper bound on the expectation of
$\tau_k \|e_{i+1}^{k+1}\|^2$ in the right hand side of \eqnok{nasa_upperbound_fiwi_squared}. Moreover, we do not need this inequality for $i=1$ when establishing the convergence rate of \cref{alg:originalnasa}. Thus, when $T \le 2$, this algorithm converges without using mini-batch of samples in each iteration, as shown in \cite{GhaRuswan20}.

\textcolor{black}{Recalling the definition of $F^*$ from Section~\ref{sec:intro} and denoting $w:= (w_1,\dots,w_T)$, we define, for some positive constants $\gamma =(\gamma_1,\ldots, \gamma_T)$, the merit function
\beq\label{merit_function}
    W_\gamma(x,z,w) = F(x) - F^* - \eta(x,z) + \sum_{i=1}^{T-1}\gamma_i \| f_i(w_{i+1}) - w_i\|^2 + \gamma_T \| f_T(x) - w_T\|^2,
\eeq
which will be used in our next result for establishing convergence analysis of \cref{alg:originalnasa}. It is worth noting that  $W_\gamma(x,z,w) \ge 0$ due to that facts that $F(x) \ge F^*, \eta(x,z) \le0$ (by Lemma~\ref{eta_lips_lemma}), and $\gamma >0$. The precise values of the constants $\gamma_1,\ldots, \gamma_T$ will be set later in our analysis. We should also mention that the above summation can start from $i=2$, in which case the convergence analysis is slightly simpler. However, we use~\eqnok{merit_function} in our analysis since, as a byproduct, it gives us an online certificate for the stochastic values of the objective function.} The above function is an extension of the one used in \cite{GhaRuswan20} for the case of $T=2$, to the multi-level setting of $T\ge 1$. A variant of this function (including only the first two terms in \eqnok{merit_function}) was used in the literature as early as \cite{ruszczynski1987linearization} and was used later in \cite{rusz20-1} for nonsmooth single-level stochastic optimization. 


\begin{lemma} \label{original_nasa_merit_function_lemma}
Suppose that the sequences $\{x^k,z^k,u^k,w_1^k,\dots,w_T^k\}_{k \geq 0}$ are generated by~\cref{alg:originalnasa} and~\cref{fi_lips} holds.
\begin{itemize}[leftmargin=0.1in]
\item [a)] If
\begin{align}\label{parameter_cond}
&\gamma_1 \ge \lambda >0, \qquad \gamma_j-\gamma_{j-1}L^2_{f_{j-1}}-\lambda >0, \notag\\ 
&4(\beta-\lambda-\gamma_T)(\gamma_j-\gamma_{j-1}L^2_{f_{j-1}}-\lambda) \ge TC_j^2 \quad j=2,\ldots, T,
\end{align}
where $C_j$'s are defined in~\cref{FZ_difference}, we have
\begin{align}\label{main_recursion}
\lambda \sum_{k=0}^{N-1}\tau_k\left[\|d^k\|^2 + \sum_{i=1}^{T-1}\|f_i(w_{i+1}^k) - w_i^k\|^2 + \|f_T(x^k) - w_T^k\|^2 \right] &\leq W_\gamma(x^0,z^0,w^0)\notag\\
&+\sum_{k=0}^{N-1}R^{k+1},
\end{align}
where
\begin{align}
R^{k+1} &:= \tau_k^2 \sum_{i=1}^{T}\gamma_i \| e_i^{k+1}\|^2+\tau_k \sum_{i=1}^{T-1}\gamma_i L_{f_i}^2 \| e_{i+1}^{k+1}\|^2 + \sum_{i=1}^{T-1}\gamma_i \bar{r_i}^{k+1}+\gamma_T r_T^{k+1} \notag\\
&~~~~~+
\tau_k \langle d^k, \Delta^k\rangle+\frac{(L_{\nabla F}+L_{\nabla \eta})\tau_k^2}{2}\|d^k\|^2 + \frac{L_{\nabla \eta}}{2}\|z^{k+1} - z^k\|^2, \label{def_R_error}\\
 \Delta^k&:= \nabla f_T(x^k) \prod_{i=2}^T \nabla f_{T+1-i}(w^k_{T+2-i})-\prod_{i=1}^T J_{T-i+1}^{k+1},\label{def_deltak}
\end{align}
and $r^{k+1}_i, \bar r^{k+1}_i$ are defined in \cref{nasa_ri} and \cref{original_nasa_ri_bar}, respectively.
\item[b)] If parameters are chosen as
\begin{align}\label{parameter_def}
\gamma_j &:= 2^{j-1}(L_{f_1}\cdots L_{f_{j-1}})^2 \sqrt{T} \quad 2 \leq j \leq T, \qquad \beta \geq \lambda + \gamma_T+\frac{T \max_{2 \leq i \leq T}C_i^2}{4\lambda},\notag\\
\gamma_1 &= \lambda = \frac{1}{2}\min_{2 \leq i \leq T}(\gamma_i - \gamma_{i-1}L_{f_{i-1}}^2)= \frac{\min_{2 \leq i \leq T} \gamma_i}{4}.
\end{align}
Then, conditions in \cref{parameter_cond} are satisfied.
\end{itemize}
%
%
\end{lemma}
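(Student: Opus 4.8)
The plan is to prove part (a) by establishing a one-step decrease inequality for the merit function $W_\gamma$ along the iterates and then telescoping, and to prove part (b) by substituting the proposed parameter values into the conditions of \cref{parameter_cond}. For part (a), I would first bound the change in the first two terms of $W_\gamma$. The $L_{\nabla F}$-smoothness of $F$ (\cref{F_Lipschitz_Constant}) together with $x^{k+1}=x^k+\tau_k d^k$ gives $F(x^{k+1})\le F(x^k)+\tau_k\langle\nabla F(x^k),d^k\rangle+\tfrac{L_{\nabla F}}{2}\tau_k^2\|d^k\|^2$. For the $-\eta$ term I would use the $L_{\nabla\eta}$-smoothness from \cref{eta_lips_lemma}, the envelope-theorem identities $\nabla_x\eta(x,z)=-z-\beta(u-x)$ and $\nabla_z\eta(x,z)=u-x$, and the first-order optimality condition for the subproblem \cref{def_uk}, which yields $\langle z^k,d^k\rangle\le -\beta\|d^k\|^2$; after using the updates \cref{def_xk} and \cref{def_zk} this produces an upper bound on $-\eta(x^{k+1},z^{k+1})+\eta(x^k,z^k)$ contributing a term $-\beta\tau_k\|d^k\|^2$, a $\tfrac{L_{\nabla\eta}}{2}\tau_k^2\|d^k\|^2$ term, the inner product $-\tau_k\langle d^k,\prod_{i=1}^T J^{k+1}_{T+1-i}\rangle$, and the term $\tfrac{L_{\nabla\eta}}{2}\|z^{k+1}-z^k\|^2$ appearing in \cref{def_R_error}. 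Adding these and writing $\nabla F(x^k)-\prod_{i=1}^T J^{k+1}_{T+1-i}=\big(\nabla F(x^k)-\nabla f_T(x^k)\prod_{i=2}^T\nabla f_{T+1-i}(w^k_{T+2-i})\big)+\Delta^k$ separates the term $\tau_k\langle\Delta^k,d^k\rangle$ of \cref{def_R_error} from a deterministic bias term, which I would handle by Young's inequality followed by \cref{FZ_difference}; bounding the square of a sum of $T-1$ terms by $(T-1)$ times the sum of squares is what generates the factor $T$ and the constants $C_j^2$ appearing in \cref{parameter_cond}.

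Next I would incorporate the function-value errors. For each $i$, multiplying the recursions \cref{nasa_upperbound_fiwi_squared} and the top-level bound for $\|f_T(x^{k+1})-w_T^{k+1}\|^2$ from \cref{nasa_lemma_fn_w_error}(b) by $\gamma_i$ and summing over $i$ yields: a contraction $(1-\tau_k)\gamma_i\|f_i(w_{i+1}^k)-w_i^k\|^2$ at each level; a cross term $\gamma_iL_{f_i}^2\tau_k\|f_{i+1}(w_{i+2}^k)-w_{i+1}^k\|^2$ raising the effective coefficient of the level-$(i+1)$ error; a multiple of $\gamma_T\tau_k\|d^k\|^2$ at the top level; and the noise terms $\gamma_i\tau_k^2\|e_i^{k+1}\|^2$, $\gamma_iL_{f_i}^2\tau_k\|e_{i+1}^{k+1}\|^2$, $\gamma_i\bar r_i^{k+1}$ and $\gamma_T r_T^{k+1}$, which are exactly the remaining pieces of $R^{k+1}$ in \cref{def_R_error}. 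Collecting the coefficient of $\tau_k\|f_j(w_{j+1}^k)-w_j^k\|^2$ gives, modulo the $\tau_k^2$ terms, $-\gamma_j+\gamma_{j-1}L_{f_{j-1}}^2+\tfrac{TC_j^2}{2a}$, where $a>0$ is the Young parameter; choosing $a$ with $\tfrac a2=\beta-\lambda-\gamma_T$, the two inequalities in \cref{parameter_cond} force this coefficient — and likewise the coefficient of $\tau_k\|d^k\|^2$ — to be $\le-\lambda$. This gives $W_\gamma(x^{k+1},z^{k+1},w^{k+1})\le W_\gamma(x^k,z^k,w^k)-\lambda\tau_k\big[\|d^k\|^2+\sum_{i=1}^{T-1}\|f_i(w_{i+1}^k)-w_i^k\|^2+\|f_T(x^k)-w_T^k\|^2\big]+R^{k+1}$, and summing over $k=0,\dots,N-1$ together with $W_\gamma\ge0$ yields \cref{main_recursion}.

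For part (b) I would directly verify \cref{parameter_cond}. A telescoping computation shows $\gamma_j-\gamma_{j-1}L_{f_{j-1}}^2=(2^{j-1}-2^{j-2})(L_{f_1}\cdots L_{f_{j-1}})^2=\tfrac12\gamma_j$, so $\min_{2\le i\le T}(\gamma_i-\gamma_{i-1}L_{f_{i-1}}^2)=\tfrac12\min_{2\le i\le T}\gamma_i$, which makes the definition $\gamma_1=\lambda=\tfrac14\min_{2\le i\le T}\gamma_i$ self-consistent and $\gamma_1\ge\lambda$ immediate. Then $\gamma_j-\gamma_{j-1}L_{f_{j-1}}^2-\lambda=\tfrac12\gamma_j-\lambda\ge2\lambda-\lambda=\lambda>0$, giving the first requirement; and since the stated lower bound on $\beta$ gives $\beta-\lambda-\gamma_T\ge\tfrac{T\max_{2\le i\le T}C_i^2}{4\lambda}$, we obtain $4(\beta-\lambda-\gamma_T)(\gamma_j-\gamma_{j-1}L_{f_{j-1}}^2-\lambda)\ge4\cdot\tfrac{T\max_iC_i^2}{4\lambda}\cdot\lambda=T\max_iC_i^2\ge TC_j^2$, which is the second.

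The main obstacle is the bookkeeping in part (a): keeping track of how the weighted level-$i$ error recursions chain into one another through the $\gamma_{j-1}L_{f_{j-1}}^2$ coefficients while simultaneously absorbing the negative curvature supplied by the $\eta$-subproblem and the \cref{FZ_difference} bias estimate, and checking that the leftover terms reassemble precisely into $R^{k+1}$ as written in \cref{def_R_error}. The cases $T\le2$, for which \cref{FZ_difference} is vacuous, are handled separately and recover the NASA analysis of \cite{GhaRuswan20}.
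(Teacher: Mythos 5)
Your proposal is correct and follows essentially the same route as the paper: descent of $F$ via smoothness, the subproblem optimality condition plus $L_{\nabla\eta}$-smoothness for the $-\eta$ term, the weighted chaining of the level-$i$ recursions from \cref{nasa_lemma_fn_w_error}(b), absorption of the \cref{FZ_difference} bias into the negative $\|d^k\|^2$ and $\|f_j(w_{j+1}^k)-w_j^k\|^2$ terms under \cref{parameter_cond}, and telescoping using $W_\gamma\ge 0$. The only cosmetic difference is that you absorb the cross term by a single Young step with parameter $a=2(\beta-\lambda-\gamma_T)$ followed by Cauchy--Schwarz on the sum, whereas the paper distributes $(\beta-\gamma_T)\|d^k\|^2/T$ across levels and applies a per-level AM--GM; both yield exactly the discriminant condition in \cref{parameter_cond}, and your verification of part (b) matches the paper's.
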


\begin{proof}
First, note that by~\cref{F_Lipschitz_Constant}, we have
\begin{align}\label{F_lips}
F(x^{k+1}) &\le F(x^k) +\langle \nabla F(x^k), x^{k+1}-x^k \rangle +\frac{L_{\nabla F}}{2}\|x^{k+1}-x^k\|^2 \notag \\
&= F(x^k) +\tau_k \langle \nabla F(x^k), d^k \rangle + \frac{L_{\nabla F}\tau_k^2}{2}\|d^k\|^2.
\end{align}
Second, note that by the optimality condition of \cref{def_uk}, we have
\beq \label{opt_QP}
\langle z^k+\beta(u^k-x^k), x^k-u^k \rangle \ge 0,~\text{which implies}~\langle z^k, d^k \rangle +\beta \|d^k\|^2 \le 0.
\eeq
Then, noting \cref{def_xk}, \cref{def_zk}, and in the view of~\cref{eta_lips_lemma}, we obtain
\begin{align}\label{eta_lips}
&\eta(x^k,z^k)- \eta(x^{k+1},z^{k+1}) \notag\\
&\le \langle z^k+ \beta(u^k-x^k), x^{k+1}-x^k \rangle- \langle u^k-x^k, z^{k+1}-z^k\rangle \notag\\ 
&+\frac{L_{\nabla \eta}}{2} \left[\|x^{k+1}-x^k\|^2 +\|z^{k+1}-z^k\|^2 \right]\notag \\
&= \tau_k \langle 2z^k+ \beta d^k, d^k \rangle- \tau_k \langle d^k, \prod_{i=1}^TJ_{T-i+1}^{k+1}\rangle +\frac{L_{\nabla \eta}}{2} \left[\|x^{k+1}-x^k\|^2 +\|z^{k+1}-z^k\|^2 \right] \notag \\
&\leq -\beta\tau_k \|d^k\|^2 - \tau_k\langle d^k, \prod_{i=1}^TJ_{T-i+1}^{k+1}\rangle
    + \frac{L_{\nabla \eta}}{2}\left[\tau_k^2\|d^k\|^2 + \|z^{k+1} - z^k\|^2 \right].
\end{align}
Third, noting~\cref{nasa_lemma_fn_w_error}.b), we have
\begin{align}
&\sum_{i=1}^{T-1}\gamma_i \left[\| f_i(w^{k+1}_{i+1}) - w^{k+1}_i\|^2 - \| f_i(w^k_{i+1}) - w^k_i\|^2 \right] \notag\\ 
&\qquad+ \gamma_T \left[\| f_T(x^{k+1}) - w^{k+1}_T\|^2-\| f_T(x^k) - w^k_T\|^2\right]\notag\\
\le& \sum_{i=1}^{T-1}\gamma_i\bigg\{-\tau_k\big[\|f_i(w_{i+1}^k) - w_i^k\|^2- L_{f_i}^2\|f_{i+1}(w_{i+2}^k) - w_{i+1}^k\|^2 \notag \\
&\qquad - L_{f_i}^2 \| e_{i+1}^{k+1}\|^2 \big]+ \tau_k^2 \|e_i^{k+1}\|^2 + \bar{r_i}^{k+1}\bigg\} \notag\\
&\qquad + \gamma_T \left\{-\tau_k\left[\|f_T(x^k) - w_T^k\|^2 - L_{f_T}^2\|d^k\|^2 \right]+\tau_k^2 \|e_T^{k+1}\|^2 + r_T^{k+1}\right\}\notag\\
=&- \tau_k \bigg\{\gamma_1 \|f_1(w_2^k) - w_1^k\|^2  + \sum_{j=2}^{T-1}[\gamma_j-\gamma_{j-1}L_{f_{j-1}}^2] \|f_j(w_{j+1}^k) - w_j^k\|^2 \notag \\
&\qquad + [\gamma_T-\gamma_{T-1}L_{f_{T-1}}^2]\|f_T(x^k) - w_T^k\|^2\bigg\}+ \sum_{i=1}^{T-1}\gamma_i \bar{r_i}^{k+1}+\gamma_T r_T^{k+1}\notag\\
&\qquad+\tau_k \left[\sum_{i=1}^{T-1}\gamma_i L_{f_i}^2 \| e_{i+1}^{k+1}\|^2+\gamma_T\|d^k\|^2\right]
+\tau_k^2 \sum_{i=1}^{T}\gamma_i \| e_i^{k+1}\|^2.\label{fun_diff}
\end{align}
Combining the above relation with \cref{eta_lips}, \cref{F_lips}, noting definition of merit function in \cref{merit_function}, and in the view of~\cref{FZ_difference}, we obtain
\begin{align*}
    & W_\gamma(x^{k+1},z^{k+1},w^{k+1}) - W_\gamma(x^k,z^k,w^k)   \\
    &\le -\tau_k(\beta-\gamma_T)\|d^k\|^2+\tau_k \|d^k\|\left[\sum_{j=2}^{T-1} C_j\|f_j(w^k_{j+1}) - w^k_j\| + C_T\|f_T(x) - w_T\|\right]\\
    &\qquad- \tau_k \bigg\{\gamma_1 \|f_1(w_2^k) - w_1^k\|^2  + \sum_{j=2}^{T-1}[\gamma_j-\gamma_{j-1}L_{f_{j-1}}^2] \|f_j(w_{j+1}^k) - w_j^k\|^2 \notag \\
    &\qquad+ [\gamma_T-\gamma_{T-1}L_{f_{T-1}}^2]\|f_T(x^k) - w_T^k\|^2\bigg\}+R^{k+1},\notag
\end{align*}
where $R^{k+1}$ is defined in \cref{def_R_error}. {\color{black}Now, if \cref{parameter_cond} holds, we have
\begin{align*}
    & -\left(\frac{\beta-\gamma_T}{T}\right)\|d^k\|^2- (\gamma_j-\gamma_{j-1}L_{f_{j-1}}^2) \|f_j(w_{j+1}^k) - w_j^k\|^2 + C_j \|d^k\| \|f_j(w^k_{j+1}) - w^k_j\| \\
    &\le -\lambda \Big[\frac{1}{T}\|d^k\|^2 +\|f_j(w_{j+1}^k) - w_j^k\|^2 \Big] \qquad \forall j \in \{1,\ldots,T\},
\end{align*}
which together with the above inequality imply that}
\begin{align}
&W_\gamma(x^{k+1},z^{k+1},w^{k+1}) - W_\gamma(x^k,z^k,w^k) \notag\\
&\le -\lambda \tau_k\left[\|d^k\|^2 + \sum_{i=1}^{T-1}\|f_i(w_{i+1}^k) - w_i^k\|^2 + \|f_T(x^k) - w_T^k\|^2 \right]+R^{k+1}.\notag
\end{align}
Summing up the above inequalities and re-arranging the terms, we obtain \cref{main_recursion}. It can be easily verified that condition \cref{parameter_cond} is satisfied by the choice of parameters in \cref{parameter_def}.
\end{proof}

The next technical result helps us to simplify our convergence analysis.

\begin{lemma} \label{Gamma_lemma} Consider a sequence $\{\tau_k\}_{k \geq 0} \in (0,1]$, and define
\begin{align}\label{def_Gamma}
\Gamma_k= \Gamma_1 \prod_{i=1}^{k-1}(1-\tau_i) \qquad k \ge 2, \qquad
\Gamma_1 = \begin{cases}1 & \text{ if } \tau_0 = 1, \\ 1-\tau_0 & \text{otherwise.}\\\end{cases}
\end{align}
\begin{itemize}
\item [a)] For any $k \ge 1$, we have
\[
\alpha_{i,k} = \frac{\tau_i}{\Gamma_{i+1}}\Gamma_k \quad 1 \le i \le k, \qquad \sum_{i=0}^{k-1}\alpha_{i,k}=\begin{cases}1 & \text{ if } \tau_0 = 1, \\ 1-\Gamma_k & \text{otherwise.}\\\end{cases}
\]

\item [b)]Suppose that $q_{k+1} \leq (1-\tau_k)q_k + p_k \ \  k \ge 0$ for sequences $\{q_k,p_k\}_{k \geq 0}$. Then, we have
\[
    q_k \leq \Gamma_k \left[a q_0 + \sum_{i=0}^{k-1}\frac{p_i}{\Gamma_{i+1}}\right], \qquad a = \begin{cases}0 & \text{ if } \tau_0 = 1, \\ 1 & \text{otherwise.}\\\end{cases}
\]
\end{itemize}
\end{lemma}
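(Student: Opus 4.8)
The statement is a pair of elementary facts about the telescoping products $\Gamma_k$ and the linear recursion $q_{k+1}\le(1-\tau_k)q_k+p_k$, so the plan is a direct computation by induction in both cases, separating the two sub-cases $\tau_0=1$ and $\tau_0<1$.

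For part (a), I would first record that from the definition $\Gamma_{i+1}=\Gamma_1\prod_{j=1}^{i}(1-\tau_j)$, so that the ratio $\Gamma_k/\Gamma_{i+1}=\prod_{j=i+1}^{k-1}(1-\tau_j)$ for $i+1\le k$ (an empty product being $1$), which immediately gives $\alpha_{i,k}=\tau_i\,\Gamma_k/\Gamma_{i+1}=\tau_i\prod_{j=i+1}^{k-1}(1-\tau_j)$; one also needs to check the boundary value $i=k$, where $\alpha_{k,k}=\tau_k$. For the summation identity, I would proceed by induction on $k$. In the case $\tau_0=1$ one has $\Gamma_1=1$, and the telescoping sum $\sum_{i=0}^{k-1}\tau_i\prod_{j=i+1}^{k-1}(1-\tau_j)=1-\prod_{j=0}^{k-1}(1-\tau_j)$, but $\prod_{j=0}^{k-1}(1-\tau_j)=0$ since the $j=0$ factor vanishes, giving the value $1$. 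In the case $\tau_0<1$, $\Gamma_1=1-\tau_0$ and the same telescoping identity gives $\sum_{i=0}^{k-1}\alpha_{i,k}=1-\prod_{j=0}^{k-1}(1-\tau_j)=1-\Gamma_k$ (using $\Gamma_k=\Gamma_1\prod_{j=1}^{k-1}(1-\tau_j)=\prod_{j=0}^{k-1}(1-\tau_j)$). The telescoping identity $\sum_{i=0}^{k-1}\tau_i\prod_{j=i+1}^{k-1}(1-\tau_j)=1-\prod_{j=0}^{k-1}(1-\tau_j)$ itself is a one-line induction: the $i=k-1$ term is $\tau_{k-1}$, and the remaining sum equals $(1-\tau_{k-1})\big(1-\prod_{j=0}^{k-2}(1-\tau_j)\big)$ by the inductive hypothesis.

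For part (b), I would divide the recursion by $\Gamma_{k+1}$. Since $\Gamma_{k+1}=(1-\tau_k)\Gamma_k$, we get $q_{k+1}/\Gamma_{k+1}\le q_k/\Gamma_k+p_k/\Gamma_{k+1}$, so unrolling from $0$ to $k-1$ yields $q_k/\Gamma_k\le q_0/\Gamma_1+\sum_{i=0}^{k-1}p_i/\Gamma_{i+1}$, i.e.\ $q_k\le\Gamma_k\big[q_0/\Gamma_1+\sum_{i=0}^{k-1}p_i/\Gamma_{i+1}\big]$. It then remains to identify $\Gamma_k/\Gamma_1$: when $\tau_0<1$, $\Gamma_1=1-\tau_0$, so $q_0/\Gamma_1$ contributes $\Gamma_k q_0/(1-\tau_0)$, which after writing $\Gamma_k=(1-\tau_0)\prod_{j=1}^{k-1}(1-\tau_j)$ I must rewrite as the stated $\Gamma_k\cdot 1\cdot q_0$ — wait, here one must be slightly careful: the claimed form is $q_k\le\Gamma_k[a q_0+\sum p_i/\Gamma_{i+1}]$ with $a=1$, which matches since $q_0/\Gamma_1=q_0/(1-\tau_0)$ and $\Gamma_k/\Gamma_1\cdot\Gamma_1 = \Gamma_k$, but the coefficient of $q_0$ inside the bracket should be $\Gamma_1^{-1}$, not $1$ — so I will instead absorb the $\Gamma_1$ differently, noting that for $\tau_0<1$ the definition already builds $\Gamma_1=1-\tau_0$ into $\Gamma_k$, and the recursion should be unrolled only down to $q_0$ using $q_1\le(1-\tau_0)q_0+p_0$ together with $\Gamma_1=1-\tau_0$, giving $q_1/\Gamma_1\le q_0 + p_0/\Gamma_1$ directly (not $q_0/\Gamma_1$), which is exactly the $a=1$ form. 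When $\tau_0=1$, the recursion at $k=0$ gives $q_1\le p_0$ and $\Gamma_1=1$, so $q_1/\Gamma_1\le p_0$ with no $q_0$ term, matching $a=0$; continuing the unrolling from $i=1$ onward reproduces the claimed bound.

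The only genuinely delicate point — and the place I expect to have to be most careful — is the bookkeeping of the very first step of the unrolling in part (b) and the role of $\Gamma_1$ versus $\Gamma_k$ in the two sub-cases; everything else is routine telescoping. I would handle this by treating $k=1$ as the explicit induction base in each sub-case and then inducting on $k$, using $\Gamma_{k+1}=(1-\tau_k)\Gamma_k$ at each step, so that the constant $a$ is pinned down correctly by the base case and simply propagates.
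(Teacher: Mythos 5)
Your proof is correct and takes essentially the same route as the paper: part (a) is the same telescoping argument (the paper writes $\tau_i/\Gamma_{i+1}=1/\Gamma_{i+1}-1/\Gamma_i$ and telescopes the reciprocals, which is exactly your product telescoping divided through by $\Gamma_k$), and part (b) is obtained identically by dividing the recursion by $\Gamma_{k+1}$, treating the first step separately to pin down the constant $a$, and summing. One immaterial slip: with the stated definition, $\alpha_{k,k}=\tau_k\Gamma_k/\Gamma_{k+1}=\tau_k/(1-\tau_k)$ rather than $\tau_k$, but the index $i=k$ never enters the summation identity, so nothing is affected.
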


\begin{proof}
To show part a), note that
\begin{align*}
\sum_{i=0}^{k-1} \alpha_{i,k}&=\Gamma_k \sum_{i=0}^{k-1}\frac{\tau_i}{\Gamma_{i+1}}= \frac{\tau_0 \Gamma_k}{\Gamma_1} + \sum_{i=1}^{k-1}\frac{\tau_i\Gamma_k}{\Gamma_{i+1}} = \frac{\tau_0 \Gamma_k}{\Gamma_1} + \Gamma_k\sum_{i=1}^{k-1}\left(\frac{1}{\Gamma_{i+1}}-\frac{1}{\Gamma_i}\right) \\
&= 1 - \frac{\Gamma_k}{\Gamma_1}(1-\tau_0).
\end{align*}
To show part b), by dividing both sides of the inequality by $\Gamma_{k+1}$ and  noting \cref{def_Gamma}, we have
\[
\frac{q_1}{\Gamma_1} \leq \frac{(1-\tau_0)q_0+p_0}{\Gamma_1},\qquad
\frac{q_{k+1}}{\Gamma_{k+1}} \leq \frac{q_k}{\Gamma_k} + \frac{p_k}{\Gamma_{k+1}} \quad k \ge 1.
\]
Summing up the above inequalities, we get the result.
\end{proof}

\vgap

The next result shows the boundedness of the error terms in the right hand side of \eqnok{main_recursion} in expectation. This is an essential step in establishing the convergence analysis of the algorithm.

\begin{proposition} \label{original_nasa_big_proposition}
Suppose that~\cref{assumption:original_nasa_assumption} holds and (for simplicity) $\tau_0 = 1$, $ \beta > 0$ for all $k$. Then, for any  $k \geq 1$, we have
\begin{align}
    \beta^2 \mathbb{E}[\|d^k\|^2| \mathscr{F}_{k}] &\leq \mathbb{E}[\|z^k\|^2|\mathscr{F}_{k}] \leq \prod_{i=1}^T\sigma_{J_i}^2, \label{dk_bnd}\\
    \mathbb{E}[\|z^{k+1}-z^k\|^2 | \mathscr{F}_k] & \leq 4\tau_k^2\prod_{i=1}^T\sigma_{J_i}^2. \label{zk_error}
\end{align}
If, in addition, the batch size $b_k$ in~\cref{alg:originalnasa} is set to
\beq\label{def_bk}
b_k = \left\lceil \frac{\max_{1 \le i \le T} L^2_{f_i}}{\tau_k} \right\rceil \qquad k \ge 0,
\eeq
we have
\beq\label{Rk_bnd}
\mathbb{E}[R^{k+1} | \mathscr{F}_k] \leq \tau_k^2 \left[\frac{1}{2}\left(\prod_{i=1}^T\sigma_{J_i}^2\right)\left(\frac{L_{\nabla F}+(1+4\beta^2)L_{\nabla \eta}}{\beta^2} \right) + \sum_{i=1}^T\gamma_i \sigma_{G_i}^2\right]:= \tau_k^2 \sigma^2,
\eeq
where $R^{k+1}$ is defined in \cref{def_R_error}.
%
%

\end{proposition}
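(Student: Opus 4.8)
The plan is to prove the three displayed bounds in the order \eqref{dk_bnd}, \eqref{zk_error}, \eqref{Rk_bnd}, since the first two are elementary and feed into the third. For \eqref{dk_bnd}: the inequality $\beta^2\|d^k\|^2\le\|z^k\|^2$ is immediate from the second relation in \eqref{opt_QP}, because $\beta\|d^k\|^2\le-\langle z^k,d^k\rangle\le\|z^k\|\,\|d^k\|$ (and $d^k,z^k$ are $\mathscr{F}_k$-measurable). For the bound on $\|z^k\|^2$, I would unroll the recursion \eqref{def_zk}: since $\tau_0=1$, \cref{Gamma_lemma} expresses $z^k$ as a convex combination $\sum_{j=0}^{k-1}\alpha_{j,k}Y^{j+1}$ of the products $Y^{j+1}:=\prod_{i=1}^{T}J_{T+1-i}^{j+1}$, with $\sum_{j}\alpha_{j,k}=1$. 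Convexity of $\|\cdot\|^2$ gives $\|z^k\|^2\le\sum_j\alpha_{j,k}\|Y^{j+1}\|^2$, sub-multiplicativity of the norm gives $\|Y^{j+1}\|^2\le\prod_{i=1}^{T}\|J_{T+1-i}^{j+1}\|^2$, and the conditional independence of the oracle outputs across levels (\cref{assumption:original_nasa_assumption}, parts 3--4) together with the second-moment bound $\sigma_{J_i}^2$ (part 2) gives $\mathbb{E}[\|Y^{j+1}\|^2\mid\mathscr{F}_j]\le\prod_{i=1}^{T}\sigma_{J_i}^2$; the tower property finishes it, and dividing by $\beta^2$ gives the bound on $\|d^k\|^2$. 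For \eqref{zk_error}, \eqref{def_zk} yields $z^{k+1}-z^k=\tau_k(Y^{k+1}-z^k)$, so $\|z^{k+1}-z^k\|^2\le2\tau_k^2(\|Y^{k+1}\|^2+\|z^k\|^2)$, and both terms are controlled by $\prod_i\sigma_{J_i}^2$ exactly as above.

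For \eqref{Rk_bnd} I would take $\mathbb{E}[\,\cdot\mid\mathscr{F}_k]$ of each of the groups of terms in the definition \eqref{def_R_error} of $R^{k+1}$. The three noise groups $\sum_i\gamma_i\bar r_i^{k+1}$, $\gamma_T r_T^{k+1}$ and $\tau_k\langle d^k,\Delta^k\rangle$ all vanish in conditional expectation. Indeed $d^k$, $x^{k+1}$ and all $w_i^k$ are $\mathscr{F}_k$-measurable; $\mathbb{E}[e_i^{k+1}\mid\mathscr{F}_k]=0$ by \cref{assumption:original_nasa_assumption}(1); $\mathbb{E}[\Delta^k\mid\mathscr{F}_k]=0$ because cross-level independence lets one factor $\mathbb{E}[\prod_iJ_{T-i+1}^{k+1}\mid\mathscr{F}_k]=\prod_i\nabla f_{T-i+1}(w_{T-i+2}^k)$, which is exactly the first term of $\Delta^k$; and the one delicate cancellation, $\mathbb{E}[\langle e_i^{k+1},A_{k,i}\rangle\mid\mathscr{F}_k]=0$, holds because the increment $A_{k,i}=f_i(w_{i+1}^{k+1})-f_i(w_{i+1}^{k})$ is a function of the level-$(i{+}1)$ output $\bar G_{i+1}^{k+1}$ alone (for $i=T$ it is even $\mathscr{F}_k$-measurable, since $x^{k+1}$ is determined by $u^k$), which is conditionally independent of $e_i^{k+1}$, a function of $\bar G_i^{k+1}$, by \cref{assumption:original_nasa_assumption}(4). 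Each of those groups therefore contributes $0$.

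What remains are the two $\|e_i^{k+1}\|^2$ groups and the last two groups. For the former I would use $\mathbb{E}[\|e_i^{k+1}\|^2\mid\mathscr{F}_k]\le\sigma_{G_i}^2/b_k$, the variance of a $b_k$-fold average of conditionally i.i.d.\ unbiased samples; the batch-size choice \eqref{def_bk} gives $1/b_k\le\tau_k/\max_jL_{f_j}^2$, which upgrades the order-$\tau_k$ group $\tau_k\sum_i\gamma_iL_{f_i}^2\|e_{i+1}^{k+1}\|^2$ to order $\tau_k^2$ (here one uses $L_{f_i}^2\le\max_jL_{f_j}^2$), while $1/b_k\le1$ already suffices for $\tau_k^2\sum_i\gamma_i\|e_i^{k+1}\|^2$. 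The remaining two groups are bounded via the first two parts of the proposition: $\|d^k\|^2\le\beta^{-2}\prod_i\sigma_{J_i}^2$ by \eqref{dk_bnd}, and $\mathbb{E}[\|z^{k+1}-z^k\|^2\mid\mathscr{F}_k]\le4\tau_k^2\prod_i\sigma_{J_i}^2$ by \eqref{zk_error}. Every surviving contribution then carries a factor $\tau_k^2$, and collecting constants yields $\mathbb{E}[R^{k+1}\mid\mathscr{F}_k]\le\tau_k^2\sigma^2$ with $\sigma^2$ as stated.

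The main obstacle is the bookkeeping of the cross terms, in particular verifying $\mathbb{E}[\langle e_i^{k+1},A_{k,i}\rangle\mid\mathscr{F}_k]=0$: one must pin down precisely that the increment $A_{k,i}$ of $f_i(w_{i+1})$ depends on the newly drawn randomness only through level $i+1$ (or, for $i=T$, only through $\mathscr{F}_k$-measurable quantities), so that the cross-level independence of \cref{assumption:original_nasa_assumption} applies; this is exactly the place where parts 3--4 of that assumption, and not merely unbiasedness, are used. The secondary difficulty is purely computational: the final bound must come out proportional to $\tau_k^2$ rather than $\tau_k$, which is precisely what forces the $\Theta(1/\tau_k)$ mini-batch size prescribed in \eqref{def_bk}.
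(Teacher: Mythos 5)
Your proposal is correct and follows essentially the same route as the paper: \eqref{dk_bnd} via \eqref{opt_QP} and the convex-combination representation of $z^k$ from \cref{Gamma_lemma} with cross-level conditional independence, \eqref{zk_error} from the $z$-update, and \eqref{Rk_bnd} by showing the noise groups vanish in conditional expectation and using $\mathbb{E}[\|e_i^{k+1}\|^2\mid\mathscr{F}_k]\le\sigma_{G_i}^2/b_k$ together with \eqref{def_bk}. If anything, you are more explicit than the paper on the one genuinely delicate cancellation, $\mathbb{E}[\langle e_i^{k+1},A_{k,i}\rangle\mid\mathscr{F}_k]=0$ via part 4 of \cref{assumption:original_nasa_assumption}, which the paper asserts without comment.
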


\begin{proof}
The first inequality in \cref{dk_bnd} directly follows by \cref{opt_QP} and Cauchy-Schwarz inequality. Noting \cref{def_zk}, the fact that $\tau_0 = 1$, and in the view of~\cref{Gamma_lemma}, we obtain
%
%
%

\begin{align*}
    z^k & = \sum_{i=0}^{k-1}\alpha_{i,k}\left(\prod_{\ell = 1}^T J_{T+1-l}^{i+1}\right)
\end{align*}

By convexity of $\| \cdot \|^2$ and conditional independence, we conclude that

\begin{align*}
    \mathbb{E}[\|z^k\|^2 | \mathscr{F}_k] &\leq \sum_{i=0}^{k-1}\alpha_{i,k}\mathbb{E}\left[\left\| \prod_{\ell=1}^TJ_{\ell}^{i+1} \right\|^2 \ \Bigg| \ \mathscr{F}_k\right] & \leq \sum_{i=0}^{k-1}\alpha_{i,k}\prod_{\ell = 1}^T \mathbb{E}[\| J_{\ell}^{i+1}\|^2 | \mathscr{F}_i] \\
    &\leq \sum_{i=0}^{k-1}\alpha_{i,k}\left(\prod_{\ell=1}^T\sigma_{J_{\ell}}^2 \right) = \prod_{\ell =1}^T\sigma_{J_{\ell}}^2.
\end{align*}
%
%
Noting \cref{dk_bnd}, we have
\begin{align*}
    \mathbb{E}[\| z^{k+1}-z^k\|^2 | \mathscr{F}_k] & \leq  \tau_k^2 \mathbb{E}\left[\left\| z^k - \prod_{\ell=1}^TJ_{\ell}^{k+1} \right\|^2 \ \Bigg| \ \mathscr{F}_k\right] \\
    &\leq 2\tau_k^2\left\{\mathbb{E}[\|z^k\|^2 | \mathscr{F}_k] + \mathbb{E}\left[\left\| \prod_{\ell=1}^T J_{\ell}^{k+1} \right\|^2 \ \Bigg| \mathscr{F}_k\right]\right\} \\
    & \leq 2\tau_k^2\left(\prod_{\ell=1}^T\sigma_{J_{\ell}}^2 + \prod_{\ell = 1}^T\sigma_{J_{\ell}}^2\right)  = 4\tau_k^2\left(\prod_{\ell=1}^T\sigma_{J_{\ell}}^2\right).
\end{align*}
Now, observe that by \cref{nasa_ri}, \cref{original_nasa_ri_bar}, the choice of $b_k$ in \cref{def_bk}, and under~\cref{assumption:original_nasa_assumption}, we have
\begin{align*}
\mathbb{E}[\Delta^k | \mathscr{F}_k] &=0, \qquad \mathbb{E}[e_i^{k+1} | \mathscr{F}_k] =0, \quad \text{implying} \quad \mathbb{E}[r_i^{k+1} | \mathscr{F}_k]=\mathbb{E}[\bar r_i^{k+1} | \mathscr{F}_k]=0,\\
\mathbb{E}[\| e_i^{k+1} \|^2 | \mathscr{F}_k] &=\mathbb{E}[\| \frac{1}{b_k}\sum_{j=1}^{b_k}G_{i,j}^{k+1} - f_i(w_{i+1}^k) \|^2 | \mathscr{F}_k]
\leq \frac{\sigma_{G_i}^2}{b_k}\\
&\qquad \qquad \qquad \qquad \qquad \qquad \qquad  \qquad  \le \min\left\{1, \frac{\tau_k }{\max_{1 \le i \le T} L^2_{f_i}}\right\} \sigma_{G_i}^2.
\end{align*}
Noting \cref{def_R_error}, \cref{dk_bnd}, \cref{zk_error}, and the above observation, we obtain \cref{Rk_bnd}.
\end{proof}

Observe that \cref{original_nasa_merit_function_lemma} shows that the summation of $\|d^k\|$ and the errors in estimating the inner function values are bounded by summation of error terms $R^k$ which is in the order of $\sum_{k=1}^N \tau_k^2$ as shown in \cref{original_nasa_big_proposition}. This is the main step in establishing the convergence of \cref{alg:originalnasa}. Indeed, $\bar{x} \in X$ is a stationary point of \cref{eq:mainprob}, if $\bar u=\bar{x}$ and $\bar{z}=\nabla F(\bar{x})$, where
\beq\label{def_sub2}
\bar u = \argmin_{y \in X}\  \left\{\langle \bar{z}, y-\bar{x} \rangle + \frac{1}{2} \|y-\bar{x}\|^2\right\}.
\eeq
Thus, for a given pair of $(\bar{x},\bar{z})$, we can define our termination criterion as follows.
\begin{definition}\label{def_Vxz}
A pair of $(\bar{x},\bar{z})$ generated by ~\cref{alg:originalnasa} is called an $\epsilon$-stationary pair, if $\mathbb{E}[\sqrt{V(\bar{x},\bar{z})}] \le \epsilon$, where
\begin{align}
    V(\bar x,\bar z) = \|\bar u-\bar x\|^2 + \|\bar z - \nabla F(\bar x)\|^2, \label{Lyaponuv_function}
\end{align}
and $\bar u$ is the solution to \eqnok{def_sub2}.
\end{definition}
\textcolor{black}{We emphasize that in Definition~\ref{def_Vxz}, we consider a unified termination criterion for both the unconstrained and constrained cases. When $X=\bbr^{d_T}$, $V(\bar x,\bar z)$ provides an upper bound for the $\|\nabla F(\bar x)\|^2$. This can be simply seen from the fact that $\bar u-\bar x = \bar z$ in~\eqref{def_sub2} for unconstrained problems and hence from~\eqref{Lyaponuv_function}, we have
\[
V(\bar x,\bar z) = \|\bar z\|^2+\|\bar z -\nabla F(\bar x)\|^2 \ge \frac 12 \|\nabla F(\bar x)\|^2.
\] 
We also refer the reader to \cite{GhaRuswan20} for the relation between $V(\bar{x},\bar{z})$ and other common gradient-based termination criteria used in the literature such as gradient mapping (\cite{Nest04,Nest07-1,GhaLanZhang16}) and proximal mapping (\cite{DruLew18}). Furthermore, as shown in \cite{GhaRuswan20}, we have
\begin{align}
    V(x^k,z^k) \le \max(1,\beta^2)\|u^k-x^k\|^2 + \|z^k - \nabla F(x^k)\|^2, \label{Lyaponuv_function2}
\end{align}
where $(x^k,u^k, z^k)$ are the solutions generated at iteration $k-1$ of \cref{alg:originalnasa}. Noting this fact, we provide the convergence rate of this algorithm by appropriately choosing $\beta$ and $\tau_k$ in the next results.} 

\begin{theorem}\label{thm:main_theorem_original_nasa}
Suppose that $\{x^k,z^k\}_{k \geq 0}$ are generated by~\cref{alg:originalnasa}, ~\cref{fi_lips} and \cref{assumption:original_nasa_assumption} hold. Also assume that the parameters satisfy \cref{parameter_def} and step sizes $\{\tau_k\}$ are chosen such that
\begin{align}
    \sum_{i=k+1}^N \tau_i\Gamma_i \leq c \Gamma_{k+1} \quad \forall k \geq 0 \text{ and } \forall N \geq 1, c \text{ is a positive constant}. \label{gamma_condition}
\end{align}

\textbf{(a)} For every $N \geq 1$, we have
\beq\label{nasa_main_theorem}
    \sum_{k=1}^N\tau_k \mathbb{E}[\| \nabla F(x^k) - z^k\|^2 | \mathscr{F}_{k}] \leq {\cal B}_1(\sigma^2, N),
\eeq
where
\beq\label{def_B}
{\cal B}_1(\sigma^2,N)=\frac{4cL^2(T-1)}{\lambda}\left[W_\gamma(x^0,z^0,w^0) + \sigma^2 \sum_{k=0}^{N-1} \tau_k^2\right] + c \prod_{\ell=1}^T\sigma_{J_{\ell}}^2 \sum_{k=0}^{N-1} \tau_k^2,
\eeq
$\sigma^2$ is defined in \cref{Rk_bnd} and
\beq\label{def_L}
    L^2 = \max \left\{L_{\nabla F}^2, \max_{2 \leq i \leq T}C_j^2\right\}.
\eeq

\textbf{(b)} As a consequence, we have
\begin{align}
    \mathbb{E}[V(x^R,z^R)] &\le  \frac{1}{\sum_{k=1}^{N} \tau_k}\left\{{\cal B}_1(\sigma^2,N)+\frac{\max(1,\beta^2)}{\lambda}\left[W_\gamma(x^0,z^0,w^0)+ \sigma^2 \sum_{k=0}^{N} \tau_k^2\right]\right\},
    \label{nasa_main_theorem2}
\end{align}
where the expectation is taken with respect to all random sequences generated by the method and an independent random integer number $R \in \{1,\dots, N\}$, whose probability distribution is given by
\begin{align*}
    \mathbb{P}[R = k] = \frac{\tau_k}{\sum_{j=1}^{N}\tau_j}
\end{align*}

\textbf{(c)} If, in addition, the stepsizes are set to

\beq\label{def_tau}
    \tau_0 = 1, \quad \tau_k = \frac{1}{\sqrt{N}} \quad \forall k = 1, \dots, N,
\eeq
we have
\begin{align}
\mathbb{E}[\| \nabla F(x^R) - z^R\|^2 ] &\le  \frac{1}{\sqrt{N}}\left[\frac{4L^2(T-1)\left[W_\gamma(x^0,z^0,w^0) + 2\sigma^2 \right]}{\lambda} +  2\prod_{\ell=1}^T\sigma_{J_{\ell}}^2 \right]\notag\\
&: = \frac{{\cal B}_2(\sigma^2)}{\sqrt{N}},\label{nasa_Fk-zk}\\
\mathbb{E}[V(x^R,z^R)] &\le  \frac{1}{\sqrt{N}}\left[{\cal B}_2(\sigma^2)+\frac{\max(1,\beta^2)}{\lambda}\left[W_\gamma(x^0,z^0,w^0) + 2\sigma^2 \right] \right],\label{nasa_main_theorem3}\\
\mathbb{E}[\| f_i(w_{i+1}^R) - w_i^R \|^2] & \leq  \frac{1}{\lambda\sqrt{N}}\left[W_\gamma(x^0,z^0,w^0) + 2\sigma^2\right] \qquad \qquad i = 1, \dots, T. \label{nasa_main_theorem4}
\end{align}
\end{theorem}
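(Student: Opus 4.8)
The overall plan: derive a one-step contraction for the gradient-tracking error $e^k:=\nabla F(x^k)-z^k$, unroll it with \cref{Gamma_lemma} and feed in the merit-function recursion \eqref{main_recursion} together with the moment bound \eqref{Rk_bnd}; this gives part (a). Part (b) then follows by averaging the inequality \eqref{Lyaponuv_function2} over the random index $R$, and part (c) is the substitution of the step-size rule \eqref{def_tau} into (a)--(b).

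\textbf{Step 1 (tracking recursion).} Abbreviate $g^k:=\nabla f_T(x^k)\prod_{i=2}^{T}\nabla f_{T+1-i}(w^k_{T+2-i})$, so that, by conditional independence across levels (\cref{assumption:original_nasa_assumption}), $\mathbb{E}\bigl[\prod_{i=1}^{T}J_{T+1-i}^{k+1}\mid\mathscr{F}_k\bigr]=g^k$, and hence $\Delta^k=g^k-\prod_{i=1}^{T}J_{T+1-i}^{k+1}$ (cf. \eqref{def_deltak}). Using \eqref{def_zk} and $x^k=(1-\tau_{k-1})x^{k-1}+\tau_{k-1}u^{k-1}$, I would write
\[
e^k=(1-\tau_{k-1})e^{k-1}+\bigl(\nabla F(x^k)-\nabla F(x^{k-1})\bigr)+\tau_{k-1}\bigl(\nabla F(x^{k-1})-g^{k-1}\bigr)+\tau_{k-1}\Delta^{k-1}.
\]
The key point is that $e^{k-1}$, $x^k$ (measurable because $u^{k-1}\in\mathscr{F}_{k-1}$) and $g^{k-1}$ are all $\mathscr{F}_{k-1}$-measurable, so that after squaring and conditioning on $\mathscr{F}_{k-1}$ the cross terms with $\Delta^{k-1}$ vanish ($\mathbb{E}[\Delta^{k-1}\mid\mathscr{F}_{k-1}]=0$), while $\mathbb{E}[\|\Delta^{k-1}\|^2\mid\mathscr{F}_{k-1}]\le\mathbb{E}[\|\prod_{i=1}^{T}J_{T+1-i}^{k}\|^2\mid\mathscr{F}_{k-1}]\le\prod_{\ell=1}^{T}\sigma_{J_\ell}^2$, exactly as in the proof of \cref{original_nasa_big_proposition}. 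The ``drift'' term is controlled by $L_{\nabla F}\tau_{k-1}\|d^{k-1}\|$ via \cref{F_Lipschitz_Constant}, and the ``bias'' term $\nabla F(x^{k-1})-g^{k-1}$ by \cref{FZ_difference}. Splitting the remaining square with a $(1\pm\tau_{k-1})$-weighted Young's inequality so that the coefficient of $\|e^{k-1}\|^2$ stays at most $1-\tau_{k-1}$, writing $\Xi^{k-1}:=\|d^{k-1}\|^2+\sum_{i=1}^{T-1}\|f_i(w^{k-1}_{i+1})-w^{k-1}_i\|^2+\|f_T(x^{k-1})-w^{k-1}_T\|^2$ and $L^2=\max\{L_{\nabla F}^2,\max_jC_j^2\}$, and using Cauchy--Schwarz over the $T-1$ terms of \cref{FZ_difference}, one arrives at
\[
\mathbb{E}[\|e^k\|^2\mid\mathscr{F}_{k-1}]\le(1-\tau_{k-1})\|e^{k-1}\|^2+4(T-1)L^2\,\tau_{k-1}\,\Xi^{k-1}+\tau_{k-1}^2\prod_{\ell=1}^{T}\sigma_{J_\ell}^2.
\]

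\textbf{Step 2 (parts (a) and (b)).} Taking total expectations and applying \cref{Gamma_lemma}(b) with $q_k=\mathbb{E}\|e^k\|^2$ and $a=0$ (as $\tau_0=1$), then multiplying by $\tau_k$, summing over $k$, interchanging the order of summation and using \eqref{gamma_condition} (so $\sum_{k=i+1}^{N}\tau_k\Gamma_k\le c\,\Gamma_{i+1}$), the $\Gamma$-weights collapse and $\sum_{k=1}^{N}\tau_k\mathbb{E}\|e^k\|^2\le c\sum_{i=0}^{N-1}\bigl(4(T-1)L^2\tau_i\mathbb{E}\Xi^i+\tau_i^2\prod_{\ell=1}^{T}\sigma_{J_\ell}^2\bigr)$. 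It remains to bound $\sum_i\tau_i\mathbb{E}\Xi^i$, which is precisely what \eqref{main_recursion} gives after taking expectations and using $\mathbb{E}[R^{k+1}\mid\mathscr{F}_k]\le\tau_k^2\sigma^2$ from \eqref{Rk_bnd}, namely $\lambda\sum_i\tau_i\mathbb{E}\Xi^i\le W_\gamma(x^0,z^0,w^0)+\sigma^2\sum_i\tau_i^2$; substituting this produces \eqref{nasa_main_theorem}--\eqref{def_B} (the conditional-expectation form on the left of \eqref{nasa_main_theorem} being read via the per-step bounds above). For (b), \eqref{Lyaponuv_function2} gives $V(x^k,z^k)\le\max(1,\beta^2)\|d^k\|^2+\|e^k\|^2$, and since $\mathbb{P}[R=k]=\tau_k/\sum_j\tau_j$ we have $\mathbb{E}[V(x^R,z^R)]=(\sum_j\tau_j)^{-1}\sum_k\tau_k\mathbb{E}[V(x^k,z^k)]$; bounding the $\|e^k\|^2$ part by part (a) and the $\|d^k\|^2$ part by $\sum_k\tau_k\mathbb{E}\|d^k\|^2\le\lambda^{-1}\bigl(W_\gamma(x^0,z^0,w^0)+\sigma^2\sum_k\tau_k^2\bigr)$ (again from \eqref{main_recursion}) yields \eqref{nasa_main_theorem2}.

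\textbf{Step 3 (part (c)) and the main difficulty.} With $\tau_0=1$ and $\tau_k=1/\sqrt N$ one has $\Gamma_k=(1-1/\sqrt N)^{k-1}$, whence $\sum_{k=i+1}^{N}\tau_k\Gamma_k\le(1-1/\sqrt N)^{i}=\Gamma_{i+1}$, so \eqref{gamma_condition} holds with $c=1$; moreover $\sum_{k=1}^{N}\tau_k=\sqrt N$, $\sum_k\tau_k^2\le 2$, and $\mathbb{P}[R=k]=1/N$. Plugging these into (a)--(b) and into the $\|f_i(w^k_{i+1})-w^k_i\|^2$ estimate coming from \eqref{main_recursion} (which is dominated by $\mathbb{E}\Xi^k$) gives \eqref{nasa_Fk-zk}--\eqref{nasa_main_theorem4}. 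I expect the only real obstacle to be Step 1: obtaining a genuine $(1-\tau_{k-1})$ contraction in the tracking recursion hinges on the correct three-way decomposition of $\nabla F(x^k)-z^k$ and on conditioning against $\mathscr{F}_{k-1}$ rather than $\mathscr{F}_k$ --- which is exactly what makes $\tau_{k-1}\Delta^{k-1}$ a martingale increment, since $x^k$ but in general not $w^k_i$ is $\mathscr{F}_{k-1}$-measurable --- together with the weighted Young's inequality that keeps the contraction constant from exceeding $1-\tau_{k-1}$. Everything after that step is routine assembly of estimates already in hand from \cref{FZ_difference}, \cref{original_nasa_merit_function_lemma}, \cref{Gamma_lemma}, and \cref{original_nasa_big_proposition}.
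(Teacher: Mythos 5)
Your proposal is correct and follows essentially the same route as the paper's proof: the same three-way decomposition of $\nabla F(x^{k})-z^{k}$ into a contraction term, a drift term controlled by $L_{\nabla F}\tau\|d\|$, a bias term controlled by \cref{FZ_difference}, and a conditionally centered noise term $\Delta$, followed by unrolling with \cref{Gamma_lemma}, summation exchange under \cref{gamma_condition}, and substitution of \cref{main_recursion} and \cref{Rk_bnd}. The only differences are cosmetic (your recursion is indexed from $k-1$ to $k$ rather than $k$ to $k+1$, and you make the measurability point about conditioning explicit where the paper leaves it implicit).
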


\vgap

\begin{proof}
We first show part (a). Noting \cref{def_zk}, we have
\[
    \nabla F(x^{k+1}) - z^{k+1} = (1-\tau_k)(\nabla F(x^k) - z^k) + \tau_k (\delta^k+\bar \delta^k +\Delta^k),
\]
where $\Delta^k$ is defined in \cref{def_R_error} and
\[
    \delta^k = \nabla F(x^k) - \nabla f_T(x^k) \prod_{i=2}^T \nabla f_{T+1-i}(w^k_{T+2-i}), \qquad
    \bar \delta^k = \frac{\nabla F(x^{k+1}) - \nabla F(x^k)}{\tau_k}.
\]
Denoting $\bar \Delta_k = \langle \Delta^k, (1-\tau_k)(\nabla F(x^k) - z^k) + \tau_k (\delta^k+\bar \delta^k) \rangle$, we have
\begin{align*}
&\|\nabla F(x^{k+1}) - z^{k+1}\|^2 \\
 =& \|(1-\tau_k)(\nabla F(x^k) - z^k) + \tau_k (\delta^k+\bar \delta^k)\|^2 +\tau_k^2 \|\Delta^k\|^2+2\tau_k \bar \Delta_k \\
\le & (1-\tau_k) \|\nabla F(x^k) - z^k\|^2 + 2\tau_k \left[\|\delta^k\|^2+L_{\nabla F}^2\|d^k\|^2+\bar \Delta_k\right]+\tau_k^2 \|\Delta^k\|^2,
\end{align*}
where the inequality follows from convexity of $\|\cdot\|^2$ and Lipschitz continuity of gradient of $F$. Thus, in the view of~\cref{Gamma_lemma}, we obtain
%
%
%
%
%
%
%
%
\[
    \| \nabla F(x^k) - z^k\|^2 \leq 2\Gamma_k \sum_{i=0}^{k-1}\frac{\tau_i}{\Gamma_{i+1}}\left(\|\delta^i\|^2+L_{\nabla F}\|d^i\|^2+\bar \Delta_i+\frac{\tau_i}{2}\|\Delta^i\|^2\right),
\]
which implies that $ \sum_{k=1}^N \tau_k \|\nabla F(x^k) - z^k\|^2 =$
\begin{align}
    &2\sum_{k=1}^N\tau_k\Gamma_k \sum_{i=0}^{k-1}\frac{\tau_i}{\Gamma_{i+1}}\left(\|\delta^i\|^2+L_{\nabla F}^2\|d^i\|^2+\bar \Delta_i+\frac{\tau_i}{2}\|\Delta^i\|^2\right) \notag\\
    =& 2\sum_{k=0}^{N-1} \frac{\tau_k}{\Gamma_{k+1}}\left(\sum_{i=k+1}^N \tau_i\Gamma_i\right)\left(\|\delta^k\|^2+L_{\nabla F}^2\|d^k\|^2+\bar \Delta_k+\frac{\tau_k}{2}\|\Delta^k\|^2\right)\notag\\
    \le & 2c \sum_{k=0}^{N-1}\tau_k\left(\|\delta^k\|^2+L_{\nabla F}^2\|d^k\|^2+\bar \Delta_k+\frac{\tau_k}{2}\|\Delta^k\|^2\right),
    \label{Fz-zk_error}
\end{align}
where the last inequality follows from~\cref{gamma_condition}.

Now, observe that under~\cref{assumption:original_nasa_assumption}, we have
\[
\mathbb{E}[\bar \Delta_k | \mathscr{F}_k] = 0, \qquad \mathbb{E}[\|\Delta_k\|^2 | \mathscr{F}_k] \le
\mathbb{E}\left[\left\| \prod_{\ell=1}^T J_{\ell}^{k+1} \right\|^2 \ \Bigg| \mathscr{F}_k\right] \le \prod_{\ell=1}^T\sigma_{J_{\ell}}^2.
\]
Moreover, by~\cref{FZ_difference} and the fact that $(\sum_{i=1}^n a_i)^2 \leq n\sum_{i=1}^n a_i^2$ for nonnegative $a_i$'s, we have
\begin{align*}
\|\delta_k\|^2 &=\left\| \nabla F(x) - \nabla f_T(x) \prod_{i=2}^T \nabla f_{T+1-i}(w_{T+2-i}) \right\|^2\\
& \leq  2(T-1) \sum_{j=2}^{T-1} C_j^2\|f_j(w_{j+1}) - w_j\|^2 + 2C_T^2\|f_T(x) - w_T\|^2.
\end{align*}
Combining the above observations with \cref{Fz-zk_error} and in the view of \cref{def_L}, we obtain
\begin{align}
    &\sum_{k=1}^N \tau_k \mathbb{E}[\|\nabla F(x^k) - z^k\|^2 | \mathscr{F}_k ] \le c \prod_{\ell=1}^T\sigma_{J_{\ell}}^2 \sum_{k=0}^{N-1} \tau_k^2  \notag \\
    &+ 4cL(T-1) \sum_{k=0}^{N-1}\tau_k\left( \sum_{j=2}^{T-1} \|f_j(w_{j+1}) - w_j\|^2 + \|f_T(x) - w_T\|^2+\|d^k\|^2\right)\notag
\end{align}
Then, \cref{nasa_main_theorem} follows from the above inequality, \cref{main_recursion}, and \cref{Rk_bnd}.
\vgap
\noindent Part (b) then follows from part (a), \cref{Lyaponuv_function2}, \cref{main_recursion}, and noting that
\[
\mathbb{E}[V(x^R,z^R)] = \frac{\sum_{k=1}^{N}\tau_k V(x^k,z^k)}{\sum_{j=1}^{N}\tau_j}.
\]
Part (c) also follows by noting that choice of $\tau_k$ in \cref{def_tau} implies that
\begin{align*}
    \sum_{k=1}^{N} \tau_k \geq \sqrt{N}, \quad \sum_{k=0}^{N} \tau_k^2 = 2, \quad \Gamma_k = \left(1-\frac{1}{\sqrt{N}}\right)^{k-1}, \\ \sum_{i=k+1}^{N}\tau_i\Gamma_i = \left(1- \frac{1}{\sqrt{N}}\right)^k\frac{1}{\sqrt{N}}\sum_{i=0}^{N-k-1}\left(1-\frac{1}{\sqrt{N}}\right)^i \leq \left(1-\frac{1}{\sqrt{N}}\right)^k,
\end{align*}
ensuring condition \cref{gamma_condition} with $c=1$.
%
%
%

\end{proof}
\begin{remark}\label{remark:rmk1}
The result in \eqnok{nasa_main_theorem3} implies that to find an $\epsilon$-stationary point of \eqnok{eq:mainprob} (see,~\cref{def_Vxz}), \cref{alg:originalnasa} requires ${\cal O}(\rho^T T^4/\epsilon^4)$ number of iterations, where $\rho$ is a constant depending on the problem parameters (i.e., Lipschitz constants and noise variances). Thus, the total number of used samples is bounded by
\[
\sum_{k=1}^T b_k = {\cal O}\left(\frac{\rho^T T^6} {\epsilon^6}\right) = {\cal O}_T\left(\frac{1} {\epsilon^6}\right)
\]
due to \eqnok{def_bk} and \eqnok{def_tau}. This bound is much better than $\mathcal{O}_T\left(1/\epsilon^{(7+T)/2}\right)$ obtained in \cite{yang2019multi-level} when $T >4$\footnote{Following the presentation in~\cite{yang2019multi-level}, we only present the $\epsilon$-related $T$ dependence for their result.} . In particular, it exhibits the level-independent behavior as discussed in~\cref{sec:intro}. Note that, we obtain constants of order $\rho^T$, for example, when $\sigma^2_{J_i}$ in~\cref{Rk_bnd} are all equal. We emphasize that~\cite{yang2019multi-level} and~\cite{zhang2019multi} also have such constant factors that depend exponentially on $T$, in their proofs and the final results.
\end{remark}

\begin{remark}\label{remark:rmk1-1}
The bound in \eqnok{nasa_main_theorem4} also implies that the errors in estimating the inner function values decrease at the same rate that we converge to the stationary point of the problem. This is essential to obtain a rate of convergence similar to that of single-level problems. Moreover, \eqnok{nasa_Fk-zk} shows that the stochastic estimate $z^k$ also converges at the same rate to the gradient of the objective function at the stationary point where $x^k$ converges to.
\end{remark}

Although our results for~\cref{alg:originalnasa} show improved convergence rates compared to~\cite{yang2019multi-level}, it is still worse than $\mathcal{O}_T\left(1/\epsilon^4\right)$ obtained in \cite{GhaRuswan20} for the case of $T=2$. Furthermore, the batch sizes $b_k$ is of order $\rho^T$ for some constant $\rho$ which makes it impractical. In the next section, we show that both of these issues could be fixed by a properly modified variant of \cref{alg:originalnasa}.

\section{Multi-level Nested Linearized Averaging Stochastic Gradient \\ Method}\label{sec:modifiednasa}
In this section, we present a linearized variant of \cref{alg:originalnasa} which can achieve the best known rate of convergence for problem \eqnok{eq:mainprob} for any $T\geq 1$, under Assumptions~\ref{fi_lips} and~\ref{assumption:original_nasa_assumption}. Indeed, when $T>2$, we have accumulated errors in estimating the inner function values. Hence, in~\cref{alg:originalnasa} we use mini-batch sampling in \eqnok{def_wi} to reduce the noise associated with the stochastic function values. However, this increases the sample complexity of the algorithm. To resolve this issue, instead of using the point estimates of $f_i$'s, we use their stochastic linear approximations in \eqnok{def_wi_new}. This modification reduces the bias error in estimating the inner function values which together with a refined convergence analysis enable us to obtain a sample complexity of ${\cal O}_T(1/\epsilon^4)$ with \cref{alg:modifiednasa}, for any $T\geq 1$ without using any mini-batches. \textcolor{black}{Furthermore,~\cref{alg:modifiednasa} works with any constant choice of step-size parameter $\beta$ (independent of the problem parameters), making it easy to implement.} As mentioned previously,~\cref{alg:modifiednasa} is motivated by the algorithm in \cite{rusz20} proposed for solving nonsmooth multi-level stochastic composition problems. However, \cite{rusz20} assumes that all functions $f_i$ explicitly depend on the decision variable $x$ which makes the composition function a variant of the general case in \eqnok{eq:mainprob}. It is also worth mentioning that other linearization techniques have been used in~\cite{duchi2018stochastic, davis2019stochastic} in estimating the stochastic inner function values for weakly convex two-level composition problems.


\begin{algorithm}
\caption{Multi-level Nested Linearized Averaging Stochastic Gradient Method}
\begin{algorithmic}
\STATE Set $b_k=1$ in Algorithm~\ref{alg:originalnasa} and replace \eqnok{def_wi} with
\beq\label{def_wi_new}
w_i^{k+1} = (1 - \tau_k)w_i^k + \tau_k G_i^{k+1} + (J_i^{k+1})^\top(w_{i+1}^{k+1} - w_{i+1}^k), \qquad  1\leq i \leq T.
\eeq
\end{algorithmic}
\label{alg:modifiednasa}
\end{algorithm}

To establish the rate of convergence of \cref{alg:modifiednasa}, we first need the next result which provides the recursion on the errors in estimating the inner function values.
\begin{lemma}\label{new_nasa_lemma_fn_w_error}
Let $\{x^k\}_{k \ge 0}$ and $\{w^k_i\}_{k \ge 0}$ be generated by~\cref{alg:modifiednasa}. Define, for $1\leq i\leq T$,
\begin{align}
e_i^{k+1}:= f_i(w_{i+1}^k) - G_i^{k+1},&~\hat{e_i}^{k+1}: = \nabla f_i(w_{i+1}^k) - J_i^{k+1},\label{new_nasa_errors}\\
\hat A_{k,i}  := f_i(w_{i+1}^{k+1}) - f_i(w_{i+1}^k&) - \nabla f_i(w_{i+1}^k)^\top (w_{i+1}^{k+1}-w_{i+1}^k) \label{new_nasa_A_ki}.
\end{align}
\begin{itemize}[leftmargin=0.2in]
\item [a)] Under~\cref{fi_lips}, we have, for~$1 \leq i \leq T$,
\begin{align}
 & \|f_i(w_{i+1}^{k+1}) - w_i^{k+1}\|^2  \leq (1-\tau_k)\|f_i(w_{i+1}^k) - w_i^k\|^2 + \tau_k^2\|e_i^{k+1}\|^2 + \dot{r}_i^{k+1} \notag\\
& + \left[8 L_{ f_i}^2+ L_{\nabla f_i} \|f_i(w_{i+1}^k) - w_i^k\|+ \| \hat{e_i}^{k+1}\|^2\right] \|w_{i+1}^{k+1}-w_{i+1}^k\|^2, \label{new_nasa_fn_w_error_i}\\
\dot{r}^{k+1}_i &:=2\tau_k\langle e_i^{k+1}, \hat A_{k,i} + (1-\tau_k)(f_i(w_{i+1}^k) - w_i^k) + (\hat{e_i}^{k+1})^\top(w_{i+1}^{k+1} - w_{i+1}^k)\rangle  \nonumber \\ 
&+2\langle (\hat{e_i}^{k+1})^\top(w_{i+1}^{k+1} - w_{i+1}^k), \hat A_{k,i}+(1-\tau_k)(f_i(w_{i+1}^k)-w_i^k)\rangle.\label{new_nasa_r_i}
\end{align}

\item [b)] Furthermore, we have for $1\leq i \leq T$, $\| w_i^{k+1} - w_i^k\|^2 \leq$
\begin{align*}
& \tau_k^2\left[2\| f_i(w_{i+1}^k) - w_i^k\|^2 + \|e_i^{k+1}\|^2 + \frac{2}{\tau_k^2}\|J_i^{k+1}\|^2 \| w_{i+1}^{k+1} - w_{i+1}^k \|^2\right]+2 \ddot{r}^{k+1}_i,
\end{align*}
where $ \ddot{r}^{k+1}_i := \tau_k \langle -e_i^{k+1}, \tau_k(f_i(w_{i+1}^k) - w_i^k) + (J_i^{k+1})^\top(w_{i+1}^{k+1}-w_{i+1}^k)\rangle.$
\end{itemize}
\end{lemma}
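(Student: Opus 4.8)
The plan is to expand both squared quantities using the update rule~\eqref{def_wi_new}, isolate the ``predictable'' contraction term $(1-\tau_k)\|f_i(w_{i+1}^k)-w_i^k\|^2$, collect the zero-mean cross terms into $\dot r_i^{k+1}$ (resp.\ $\ddot r_i^{k+1}$), and bound the remaining quadratic error contributions. For part (a), I would start by writing, using~\eqref{def_wi_new} and the definition $e_i^{k+1}=f_i(w_{i+1}^k)-G_i^{k+1}$,
\begin{align*}
f_i(w_{i+1}^{k+1})-w_i^{k+1}
&= f_i(w_{i+1}^{k+1}) - (1-\tau_k)w_i^k - \tau_k G_i^{k+1} - J_i^{k+1}(w_{i+1}^{k+1}-w_{i+1}^k)\\
&= \hat A_{k,i} + (1-\tau_k)(f_i(w_{i+1}^k)-w_i^k) + \tau_k e_i^{k+1} + \hat e_i^{k+1}(w_{i+1}^{k+1}-w_{i+1}^k),
\end{align*}
where I have added and subtracted $f_i(w_{i+1}^k)$ and $\nabla f_i(w_{i+1}^k)(w_{i+1}^{k+1}-w_{i+1}^k)$ so that $\hat A_{k,i}$ from~\eqref{new_nasa_A_ki} appears together with $\hat e_i^{k+1}$ from~\eqref{new_nasa_errors}. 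Squaring this and grouping the $\tau_k e_i^{k+1}$ term separately (it contributes $\tau_k^2\|e_i^{k+1}\|^2$ plus its cross terms, which go into $\dot r_i^{k+1}$), I am left with $\|\hat A_{k,i}+(1-\tau_k)(f_i(w_{i+1}^k)-w_i^k)+\hat e_i^{k+1}(w_{i+1}^{k+1}-w_{i+1}^k)\|^2$; expanding this the cross term between $\hat e_i^{k+1}(\cdot)$ and $\hat A_{k,i}+(1-\tau_k)(f_i(w_{i+1}^k)-w_i^k)$ is the second line of $\dot r_i^{k+1}$ in~\eqref{new_nasa_r_i}.

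The core estimate is then to bound $\|\hat A_{k,i}+(1-\tau_k)(f_i(w_{i+1}^k)-w_i^k)\|^2+\|\hat e_i^{k+1}(w_{i+1}^{k+1}-w_{i+1}^k)\|^2$ by $(1-\tau_k)\|f_i(w_{i+1}^k)-w_i^k\|^2$ plus the bracketed coefficient times $\|w_{i+1}^{k+1}-w_{i+1}^k\|^2$. Here I would use: (i) by~\cref{fi_lips} and the standard descent/second-order estimate, $\|\hat A_{k,i}\|\le \tfrac{L_{\nabla f_i}}{2}\|w_{i+1}^{k+1}-w_{i+1}^k\|^2$, and also the cruder Lipschitz bound $\|\hat A_{k,i}\|\le 2L_{f_i}\|w_{i+1}^{k+1}-w_{i+1}^k\|$; (ii) for the cross term $2(1-\tau_k)\langle \hat A_{k,i}, f_i(w_{i+1}^k)-w_i^k\rangle$, apply Cauchy--Schwarz with the quadratic bound on one factor to produce the $L_{\nabla f_i}\|f_i(w_{i+1}^k)-w_i^k\|\,\|w_{i+1}^{k+1}-w_{i+1}^k\|^2$ term in the coefficient; (iii) $\|\hat A_{k,i}\|^2\le 4L_{f_i}^2\|w_{i+1}^{k+1}-w_{i+1}^k\|^2$ contributes to the $8L_{f_i}^2$ constant (the remaining factor of two absorbing a $(1-\tau_k)^2\le 1$ slack from completing the contraction $(1-\tau_k)^2+(1-\tau_k)\tau_k=(1-\tau_k)$ via Young's inequality, exactly as in~\eqref{Ak_fk}); and (iv) $\|\hat e_i^{k+1}(w_{i+1}^{k+1}-w_{i+1}^k)\|^2\le\|\hat e_i^{k+1}\|^2\|w_{i+1}^{k+1}-w_{i+1}^k\|^2$ gives the last term in the bracket. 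Tracking the constants carefully yields the coefficient $8L_{f_i}^2+L_{\nabla f_i}\|f_i(w_{i+1}^k)-w_i^k\|+\|\hat e_i^{k+1}\|^2$, and everything predictable-mean-zero gets swept into $\dot r_i^{k+1}$, giving~\eqref{new_nasa_fn_w_error_i}.

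For part (b), I would directly compute from~\eqref{def_wi_new} that
\[
w_i^{k+1}-w_i^k = \tau_k(G_i^{k+1}-w_i^k) + J_i^{k+1}(w_{i+1}^{k+1}-w_{i+1}^k) = -\tau_k e_i^{k+1} + \tau_k(f_i(w_{i+1}^k)-w_i^k) + J_i^{k+1}(w_{i+1}^{k+1}-w_{i+1}^k),
\]
then square, isolate the cross terms involving $-\tau_k e_i^{k+1}$ into $2\ddot r_i^{k+1}$, and bound the square of the remaining two terms by $2\tau_k^2\|f_i(w_{i+1}^k)-w_i^k\|^2 + 2\|J_i^{k+1}\|^2\|w_{i+1}^{k+1}-w_{i+1}^k\|^2$ via $\|a+b\|^2\le 2\|a\|^2+2\|b\|^2$; the cross term of $-\tau_k e_i^{k+1}$ with itself adds $\tau_k^2\|e_i^{k+1}\|^2$, matching the claimed inequality after factoring out $\tau_k^2$ from the first bracket.

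The main obstacle is the bookkeeping in part (a): the update~\eqref{def_wi_new} now contains the extra linearization term $J_i^{k+1}(w_{i+1}^{k+1}-w_{i+1}^k)$, so the quantity $f_i(w_{i+1}^{k+1})-w_i^{k+1}$ no longer decomposes as cleanly as in~\cref{nasa_lemma_fn_w_error}; one must carefully choose what to call $\hat A_{k,i}$ (the \emph{second-order} Taylor remainder, not the first-order increment) so that $\|\hat A_{k,i}\|=O(\|w_{i+1}^{k+1}-w_{i+1}^k\|^2)$, and then be disciplined about which cross terms are mean-zero (those go into $\dot r_i^{k+1}$) versus which must be absorbed as genuine $O(\|w_{i+1}^{k+1}-w_{i+1}^k\|^2)$ error — in particular the $2(1-\tau_k)\langle\hat A_{k,i},f_i(w_{i+1}^k)-w_i^k\rangle$ term is \emph{not} mean-zero and is the source of the unusual $L_{\nabla f_i}\|f_i(w_{i+1}^k)-w_i^k\|$ term in the coefficient. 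Getting the precise constant $8L_{f_i}^2$ (rather than just ``some constant'') requires keeping the $(1-\tau_k)\le 1$ and $\tau_k\le 1$ slack in mind throughout, but this is routine once the decomposition is fixed.
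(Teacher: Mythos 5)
Your proposal is correct and follows essentially the same route as the paper's proof: the same decomposition $f_i(w_{i+1}^{k+1})-w_i^{k+1}=\hat A_{k,i}+(1-\tau_k)(f_i(w_{i+1}^k)-w_i^k)+\tau_k e_i^{k+1}+\hat e_i^{k+1}(w_{i+1}^{k+1}-w_{i+1}^k)$, the same two-sided bound on $\|\hat A_{k,i}\|$ (linear in $\|w_{i+1}^{k+1}-w_{i+1}^k\|$ via Lipschitzness of $f_i$, quadratic via Lipschitzness of $\nabla f_i$), and the same handling of the non-mean-zero cross term $2(1-\tau_k)\langle \hat A_{k,i}, f_i(w_{i+1}^k)-w_i^k\rangle$. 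Part (b) differs only cosmetically (you apply $\|a+b\|^2\le 2\|a\|^2+2\|b\|^2$ where the paper applies Young's inequality to one cross term), and your observation that the stated $8L_{f_i}^2$ is a relaxation of the $4L_{f_i}^2$ the computation actually yields is consistent with the paper.
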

\begin{proof}
We first prove part a). 

When $1\leq i < T$, by definition of $\hat A_{k,i}, \hat{e_i}^{k+1},G_i^{k+1},w_i^{k+1}$, and $\dot{r}_i^{k+1}$, we have
\begin{align}
    & \|f_i(w_{i+1}^{k+1}) - w_i^{k+1}\|^2 \notag\\ 
    = & \|\hat A_{k,i} + f_i(w_{i+1}^k) + \nabla f_i(w_{i+1}^k)^\top (w_{i+1}^{k+1}-w_{i+1}^k) \notag  \\
    &\qquad- (1-\tau_k)w_i^k - \tau_kG_i^{k+1} - (J_i^{k+1})^\top(w_{i+1}^{k+1}-w_{i+1}^k)\|^2 \notag\\  
    = & \|\hat A_{k,i} + (\widehat{e_i}^{k+1})^\top(w_{i+1}^{k+1}-w_{i+1}^k) + (1-\tau_k)(f_i(w_{i+1}^k) - w_i^k) + \tau_ke_i^{k+1}\|^2 \label{fi_wi_modified_nasa}\\  
    =& \|(\widehat{e_i}^{k+1})^\top(w_{i+1}^{k+1}-w_{i+1}^k)\|^2 + \|\hat A_{k,i} + (1-\tau_k)(f_i(w_{i+1}^k)-w_i^k)\|^2 + \tau_k^2\|e_i^{k+1}\|^2 + \dot{r}_i^{k+1} \notag\\  
    \leq & \|\hat A_{k,i} + (1-\tau_k)(f_i(w_{i+1}^k) - w_i^k)\|^2 +\tau_k^2\|e_i^{k+1}\|^2 + \dot{r}_i^{k+1} + \|\widehat{e_i}^{k+1}\|^2\|w_{i+1}^{k+1}-w_{i+1}^k\|^2\notag\\
    \leq & (1-\tau_k)\|f_i(w_{i+1}^k) - w_i^k\|^2+\|\hat A_{k,i}\|^2+ 2(1-\tau_k)\langle \hat A_{k,i}, f_i(w_{i+1}^k) - w_i^k\rangle +\tau_k^2\|e_i^{k+1}\|^2 \notag\\
    &\qquad + \dot{r}_i^{k+1}+ \|\widehat{e_i}^{k+1}\|^2\|w_{i+1}^{k+1}-w_{i+1}^k\|^2.\label{fi_wi_modified_nasa2}
\end{align}
Now, noting that under~\cref{fi_lips}, we have 
\beq\label{Ak_bnd}
\|\hat A_{k,i}\| \le \frac{1}{2}\min\left\{4 L_{f_i}\|w_{i+1}^{k+1}-w_{i+1}^k\|, L_{\nabla f_i}\|w_{i+1}^{k+1}-w_{i+1}^k\|^2 \right\},
\eeq
and using Cauchy–Schwarz inequality in \eqnok{fi_wi_modified_nasa2}, we obtain \cref{new_nasa_fn_w_error_i}.

To show part b), noting definition of \cref{def_wi_new} and \cref{new_nasa_errors}, Cauchy-Schwartz and Young's inequality, for $1\leq i \leq T$,
\begin{align*}
    &\| w_i^{k+1} - w_i^k\|^2 \\
     =& \| \tau_k(G_i^{k+1}-w_i^k) + (J_i^{k+1})^\top(w_{i+1}^{k+1} - w_{i+1}^k)\|^2 \\  = &\tau_k^2\|G_i^{k+1} - w_i^k\|^2 + \| (J_i^{k+1})^\top(w_{i+1}^{k+1} - w_{i+1}^k)\|^2 + 2\tau_k\langle G_i^{k+1} - w_i^k, (J_i^{k+1})^\top(w_{i+1}^{k+1} - w_{i+1}^k)\rangle \\
    \leq & \tau_k^2 \| G_i^{k+1} - w_i^k\|^2 + 2\|J_i^{k+1}\|^2 \|w_{i+1}^{k+1} - w_{i+1}^{k}\|^2 + \tau_k^2 \|f_i(w_{i+1}^k) - w_i^k\|^2 \\
     &+ 2\tau_k\langle -e_i^{k+1}, (J_i^{k+1})^\top(w_{i+1}^{k+1} - w_{i+1}^k)\rangle  \\
     =& 2\tau_k^2 \| f_i(w_{i+1}^k) - w_i^k\|^2 + \tau_k^2 \|e_i^{k+1}\|^2 + 2\|J_i^{k+1}\|^2 \|w_{i+1}^{k+1} - w_{i+1}^k\|^2 \\  &+ 2\tau_k \langle -e_i^{k+1}, \tau_k(f_i(w_{i+1}^k) - w_i^k) + (J_i^{k+1})^\top(w_{i+1}^{k+1}-w_{i+1}^k)\rangle.
\end{align*}
\end{proof}

\noindent In the next result, we show how the moments of $\|w_i^{k+1} - w_i^k\|$ decrease in the corresponding order of $\tau_k$. This is a crucial step on bounding the errors in estimating the inner function values.

\begin{lemma}\label{moment_bounds}
Under~\cref{fi_lips}, \cref{assumption:original_nasa_assumption}, for $1\leq i \leq T$, and with the choice of $\tau_0=1$ (for simplicity), we have
\begin{align}
&\mathbb{E}[ \|f_i(w_{i+1}^{k+1}) - w_i^{k+1}\|^2  | \mathscr{F}_k] \leq  \sigma_{G_i}^2+(4L_{f_i}^2+ \hat \sigma_{J_i}^2)c_{i+1} ,\label{fi-wi_bound}\\
&\mathbb{E}[\|w_i^{k+1} - w_i^k\|^2 | \mathscr{F}_k] \leq c_i~\tau^2_k,  \label{new_nasa_conditional_expectation_w_T_second_power}
\end{align}
where, for $1 \le i \le T$,
\begin{align}
c_i := 3 \sigma_{G_T}^2+2(4L_{f_T}^2+\hat \sigma_{J_T}^2+\sigma_{J_T}^2) c_{i+1},~~~\text{with}~~~ c_{T+1} := \left(\prod_{i=1}^T\sigma_{J_i}^2\right) \beta^{-2}.\label{def_ci}
\end{align}
\end{lemma}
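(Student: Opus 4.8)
The plan is to establish the two bounds \eqnok{fi-wi_bound} and \eqnok{new_nasa_conditional_expectation_w_T_second_power} simultaneously by a \emph{backward} induction on $i$, starting from $i=T$ and using the convention $w_{T+1}^k\equiv x^k$. The base of the recursion is the level $T+1$: since $w_{T+1}^{k+1}-w_{T+1}^k = x^{k+1}-x^k = \tau_k d^k$ and, by \eqnok{opt_QP} together with Cauchy--Schwarz, $\beta\|d^k\|\le\|z^k\|$, we get $\mathbb{E}[\|x^{k+1}-x^k\|^2\mid\mathscr{F}_k]\le \tau_k^2\beta^{-2}\mathbb{E}[\|z^k\|^2\mid\mathscr{F}_k]\le \tau_k^2\beta^{-2}\prod_{i=1}^T\sigma_{J_i}^2 = c_{T+1}\tau_k^2$, exactly as in \eqnok{dk_bnd} of \cref{original_nasa_big_proposition}; this is what the definition of $c_{T+1}$ in \eqnok{def_ci} is designed to match.

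For the inductive step, assume \eqnok{new_nasa_conditional_expectation_w_T_second_power} holds at level $i+1$, i.e.\ $\mathbb{E}[\|w_{i+1}^{k+1}-w_{i+1}^k\|^2\mid\mathscr{F}_k]\le c_{i+1}\tau_k^2$. First I would take conditional expectations in the part~(a) bound \eqnok{new_nasa_fn_w_error_i}. By \cref{assumption:original_nasa_assumption}, $\mathbb{E}[e_i^{k+1}\mid\mathscr{F}_k]=0$ and $\mathbb{E}[\hat e_i^{k+1}\mid\mathscr{F}_k]=0$; moreover the terms in $\dot r_i^{k+1}$ from \eqnok{new_nasa_r_i} are products of a mean-zero oracle error with $\mathscr{F}_k$-measurable-looking quantities — except that $w_{i+1}^{k+1}-w_{i+1}^k$ is \emph{not} $\mathscr{F}_k$-measurable, which is the one subtle point: one must use that, given $\mathscr{F}_k$, the oracle outputs at level $i$ are independent of those at level $i+1$ (Parts~3 and~4 of \cref{assumption:original_nasa_assumption}), so quantities like $\langle e_i^{k+1},\,\hat e_i^{k+1}(w_{i+1}^{k+1}-w_{i+1}^k)\rangle$ and $\langle e_i^{k+1}, \hat A_{k,i}\rangle$ still have zero conditional mean once we condition further on $\mathscr{F}_k$ and the level-$(i+1)$ randomness. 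Thus $\mathbb{E}[\dot r_i^{k+1}\mid\mathscr{F}_k]=0$. What survives is $(1-\tau_k)\mathbb{E}[\|f_i(w_{i+1}^k)-w_i^k\|^2\mid\mathscr{F}_k] + \tau_k^2\sigma_{G_i}^2 + (8L_{f_i}^2 + L_{\nabla f_i}\|f_i(w_{i+1}^k)-w_i^k\| + \hat\sigma_{J_i}^2)\,c_{i+1}\tau_k^2$ — and here, when $\tau_0=1$ and we are looking at the conditional bound, the ``$(1-\tau_k)$'' contraction term together with the crude bound $\|f_i(w_{i+1}^k)-w_i^k\|$ being absorbed gives \eqnok{fi-wi_bound}; the precise bookkeeping of the $8L_{f_i}^2$ versus $4L_{f_i}^2$ and the handling of the $L_{\nabla f_i}\|\cdot\|$ term (using $\tau_0=1$ so that at $k=0$ the first term vanishes, then propagating) is routine but needs care.

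Then I would feed \eqnok{fi-wi_bound} into the part~(b) bound for $\|w_i^{k+1}-w_i^k\|^2$: taking conditional expectations there, $\mathbb{E}[\ddot r_i^{k+1}\mid\mathscr{F}_k]=0$ again by unbiasedness and the cross-level independence, $\mathbb{E}[\|J_i^{k+1}\|^2\,\|w_{i+1}^{k+1}-w_{i+1}^k\|^2\mid\mathscr{F}_k]\le \sigma_{J_i}^2\,c_{i+1}\tau_k^2$ (using Part~3 independence of $J_i^{k+1}$ from the level-$(i+1)$ increment and then the moment bound $\mathbb{E}[\|J_i^{k+1}\|^2\mid\mathscr{F}_k]\le\sigma_{J_i}^2$), and $\mathbb{E}[\|e_i^{k+1}\|^2\mid\mathscr{F}_k]\le\sigma_{G_i}^2$ plus $2\mathbb{E}[\|f_i(w_{i+1}^k)-w_i^k\|^2\mid\mathscr{F}_k]$ bounded by \eqnok{fi-wi_bound} (applied at the previous step, or trivially at $k=0$). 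Collecting terms gives $\mathbb{E}[\|w_i^{k+1}-w_i^k\|^2\mid\mathscr{F}_k]\le \tau_k^2[\,3\sigma_{G_i}^2 + 2(4L_{f_i}^2+\hat\sigma_{J_i}^2+\sigma_{J_i}^2)c_{i+1}\,] = c_i\tau_k^2$, which is precisely \eqnok{def_ci} (note the statement's $c_i$ has $T$-subscripts on the constants, which I read as a typo for the level-$i$ constants), closing the induction.

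The main obstacle I anticipate is the measurability subtlety around $w_{i+1}^{k+1}-w_{i+1}^k$: the recursions in \cref{new_nasa_lemma_fn_w_error} carry remainder terms that pair a level-$i$ oracle error with the level-$(i+1)$ increment, and one has to argue carefully — via the within-level independence of $(G_i^{k+1},J_i^{k+1})$ (Part~3) and the between-level independence (Part~4) — that conditioning first on $\mathscr{F}_k$ and the level-$(i+1)$ randomness kills these cross terms in expectation, and that $\mathbb{E}[\|J_i^{k+1}\|^2\|w_{i+1}^{k+1}-w_{i+1}^k\|^2\mid\mathscr{F}_k]$ factorizes. A secondary but purely bookkeeping difficulty is tracking the constants through the backward recursion \eqnok{def_ci} and confirming that the "$8L_{f_i}^2$" in \eqnok{new_nasa_fn_w_error_i} collapses to the "$4L_{f_i}^2$" appearing in \eqnok{fi-wi_bound} — this should follow from the $\tfrac12\min\{\cdot,\cdot\}$ structure of the $\hat A_{k,i}$ bound in \eqnok{Ak_bnd} combined with the $(1-\tau_k)$ contraction and the $\tau_0=1$ normalization.
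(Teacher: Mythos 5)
Your overall architecture is the paper's: backward induction from $i=T$, base case $\mathbb{E}[\|w_{T+1}^{k+1}-w_{T+1}^k\|^2\mid\mathscr{F}_k]=\tau_k^2\mathbb{E}[\|d^k\|^2\mid\mathscr{F}_k]\le c_{T+1}\tau_k^2$ via \eqref{dk_bnd}, then alternating between the bound on $\mathbb{E}[\|f_i(w_{i+1}^{k})-w_i^{k}\|^2\mid\mathscr{F}_k]$ and the bound on $\mathbb{E}[\|w_i^{k+1}-w_i^k\|^2\mid\mathscr{F}_k]$ via part~(b) of \cref{new_nasa_lemma_fn_w_error}; your part~(b) step and your reading of the $T$-subscripts in \eqref{def_ci} as level-$i$ subscripts are both consistent with what the paper does.

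The genuine gap is in your first step: you start from the part~(a) inequality \eqref{new_nasa_fn_w_error_i}, which carries the factor $8L_{f_i}^2 + L_{\nabla f_i}\|f_i(w_{i+1}^k)-w_i^k\| + \|\hat e_i^{k+1}\|^2$, and you claim the bookkeeping collapsing $8L_{f_i}^2$ to $4L_{f_i}^2$ and killing the $L_{\nabla f_i}$ term is ``routine.'' It is not. The $L_{\nabla f_i}\|f_i(w_{i+1}^k)-w_i^k\|$ factor is $\mathscr{F}_k$-measurable and does not vanish at $k=0$ under $\tau_0=1$ (only the $(1-\tau_0)q_0$ contraction term does); it survives every iteration and makes the unrolled recursion self-referential, bounding $q_k$ in terms of $\sup_j\sqrt{q_j}$. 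Resolving that yields a constant strictly larger than the claimed $\sigma_{G_i}^2+(4L_{f_i}^2+\hat\sigma_{J_i}^2)c_{i+1}$, and the $8L_{f_i}^2$ never becomes $4L_{f_i}^2$. The paper avoids this entirely by not using \eqref{new_nasa_fn_w_error_i}: it returns to the exact identity $f_i(w_{i+1}^{k+1})-w_i^{k+1}=(1-\tau_k)(f_i(w_{i+1}^k)-w_i^k)+D_{k,i}$ with $D_{k,i}=\hat A_{k,i}+\tau_k e_i^{k+1}+\hat e_i^{k+1}(w_{i+1}^{k+1}-w_{i+1}^k)$, applies convexity in the form $\|\cdot\|^2\le(1-\tau_k)\|f_i(w_{i+1}^k)-w_i^k\|^2+\tau_k^{-1}\|D_{k,i}\|^2$, and bounds $\|\hat A_{k,i}\|\le 2L_{f_i}\|w_{i+1}^{k+1}-w_{i+1}^k\|$ using the \emph{first} branch of the $\min$ in \eqref{Ak_bnd}. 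This produces exactly $4L_{f_i}^2$, no $L_{\nabla f_i}$ term at all, and a clean recursion $q_{k+1}\le(1-\tau_k)q_k+\tau_k\cdot(\text{const})$ that \cref{Gamma_lemma} unrolls to the stated bound. Your measurability discussion (cross terms in $\dot r_i^{k+1}$ and the factorization of $\mathbb{E}[\|J_i^{k+1}\|^2\|w_{i+1}^{k+1}-w_{i+1}^k\|^2\mid\mathscr{F}_k]$ via Parts~3--4 of \cref{assumption:original_nasa_assumption}) is correct and applies verbatim to the paper's $D_{k,i}$ route, so the fix is to swap your starting inequality, not your induction.
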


\vgap

\begin{proof}
Recall the definitions of $\hat A_{k,i}, e_i^{k+1}, \hat{e}_i^{k+1}$ and, for $1\leq i\leq T$, define
\begin{align}
    D_{k,i} := \hat A_{k,i} + \tau_ke_i^{k+1} + \hat{e}_i^{k+1}(w_{i+1}^{k+1}-w_{i+1}^k).\label{new_nasa_D_ki}
\end{align}
Then, by \eqnok{fi_wi_modified_nasa}, for $1\leq i\leq T$, we have
\begin{align}
    f_i(w_{i+1}^{k+1}) - w_i^{k+1} & = (1-\tau_k)(f_i(w_{i+1}^k)-w_i^k) + D_{k,i},\label{new_nasa_fi_w_i_D_ki}
\end{align}
which together with the convexity of $\|\cdot\|^2$, imply that
\begin{align}
\|f_i(w_{i+1}^{k+1})-w_i^{k+1}\|^2 &\leq (1-\tau_k)\|f_i(w_{i+1}^{k})-w_i^k\|^2 + \frac{1}{\tau_k}\|D_{k,i}\|^2. \label{fi-wi_Dki}\
\end{align}
Moreover, we have
\begin{align}
\|D_{k,i}\|^2 &= \|\hat A_{k,i}\|^2 + \tau_k^2 \|e_i^{k+1}\|^2 + \|(\hat{e}_i^{k+1})^\top(w_{i+1}^{k+1}-w_{i+1}^k)\|^2 +2 r'_{k,i}, \label{Dki_squared}\\
r'_{k,i} &= \langle \hat A_{k,i},\tau_k e_i^{k+1}+(\hat{e}_i^{k+1})^\top(w_{i+1}^{k+1}-w_{i+1}^k) \rangle + \tau_k \langle e_i^{k+1}, (\hat{e}_i^{k+1})^\top(w_{i+1}^{k+1}-w_{i+1}^k)\rangle,\nonumber
\end{align}
which together with the fact that $\mathbb{E}[r'_{k,i}| \mathscr{F}_k]=0$ under~\cref{assumption:original_nasa_assumption}, imply that
\begin{align}
\mathbb{E}[\|D_{k,i}\|^2| \mathscr{F}_k] &= \mathbb{E}[\|\hat A_{k,i}\|^2| \mathscr{F}_k] + \tau_k^2 \mathbb{E}[\|e_i^{k+1}\|^2| \mathscr{F}_k] +
\mathbb{E}[\|\hat{e}_i^{k+1}(w_{i+1}^{k+1}-w_{i+1}^k)\|^2| \mathscr{F}_k] \nonumber\\
&\le \tau_k^2 \mathbb{E}[\|e_i^{k+1}\|^2| \mathscr{F}_k] +
\left(4L_{f_i}^2+\mathbb{E}[\|\hat{e}_i^{k+1}\|^2|\mathscr{F}_k]\right) \mathbb{E}[\|w_{i+1}^{k+1}-w_{i+1}^k\|^2| \mathscr{F}_k],\label{exp_Dki_squared}
\end{align}
where the second inequality follows from \eqnok{Ak_bnd}.
Hence, noting \eqnok{dk_bnd}, $w_{T+1}^{k} =x^k$,  we have
\begin{align*}
    \mathbb{E}[\|D_{k,T}\|^2 | \mathscr{F}_k] & \leq \tau_k^2\left[\sigma_{G_T}^2+\left(4L_{f_T}^2+\hat \sigma_{J_i}^2\right)\left(\prod_{i=1}^T\sigma_{J_i}^2\right) \beta^{-2}\right].
\end{align*}
Using \eqnok{fi-wi_Dki} with $i=T$, the above inequality, and~\cref{Gamma_lemma} with the choice of $\tau_0=1$, we have
\beq\label{fT_squared_bnd}
\mathbb{E}[\|f_T(x^{k})-w_T^{k}\|^2 | \mathscr{F}_k] \leq \sigma_{G_T}^2+\left(4L_{f_T}^2+\hat \sigma_{J_i}^2\right)\left(\prod_{i=1}^T\sigma_{J_i}^2\right) \beta^{-2}.
\eeq
Moreover, by \cref{new_nasa_lemma_fn_w_error}.b) and under \cref{assumption:original_nasa_assumption}, we have $\mathbb{E}[\|w_{i+1}^{k+1}-w_i^k\|^2 | \mathscr{F}_k] \le $
\beq\label{wi_ineq}
 \tau_k^2\mathbb{E}\left[2\| f_i(w_{i+1}^k) - w_i^k\|^2 + \|e_i^{k+1}\|^2 + \frac{2}{\tau_k^2}\|J_i^{k+1}\|^2 \| w_{i+1}^{k+1} - w_{i+1}^k \|^2 \Big| \mathscr{F}_k\right],
\eeq
implying that
\beq
\mathbb{E}[\|w_T^{k+1}-w_T^k\|^2 | \mathscr{F}_k] \le \tau_k^2\left[3 \sigma_{G_T}^2+2(4L_{f_T}^2+\hat \sigma_{J_T}^2+\sigma_{J_T}^2)\left(\prod_{i=1}^T\sigma_{J_i}^2\right) \beta^{-2}\right].
\eeq
This completes the proof of~\cref{fi-wi_bound} and \cref{new_nasa_conditional_expectation_w_T_second_power} when $i=T$. We now use backward induction to complete the proof.  By the above result, the base case of $i=T$ holds. Assume that $\mathbb{E}[\|w_{i+1}^{k+1}-w_{i+1}^k\|^2|\mathscr{F}_k] \leq c_{i+1}\tau_k^2$ for some $1\leq i < T$. Hence, by \cref{Dki_squared} and under~\cref{assumption:original_nasa_assumption}, we have
\begin{align*}
\mathbb{E}[\|D_{k,i}\|^2|\mathscr{F}_k] \leq \tau_k^2[\sigma_{G_i}^2+(4L_{f_i}^2+ \hat \sigma_{J_i}^2)c_{i+1}],
\end{align*}
which together with \cref{Gamma_lemma}, imply that
\begin{align*}
    \mathbb{E}[\|f_i(w_{i+1}^{k})-w_i^{k}\|^2 | \mathscr{F}_k] \leq \sigma_{G_i}^2+(4L_{f_i}^2+ \hat \sigma_{J_i}^2)c_{i+1}].
\end{align*}
Thus, by \cref{wi_ineq}, we obtain
\begin{align*}
    \mathbb{E}[\|w_i^{k+1}-w_i^k\|^2|\mathscr{F}_k] & \leq \tau_k^2[3\sigma_{G_i}^2+2(4L_{f_i}^2+ \hat \sigma_{J_i}^2+\sigma^2_{J_i})c_{i+1}],
\end{align*}
which together with the definition of $c_i$ in \eqnok{def_ci}, complete the proof.
\end{proof}

\vgap

\noindent The next result is the counterpart of \cref{original_nasa_merit_function_lemma} for \cref{alg:modifiednasa}.

\begin{lemma} \label{new_nasa_merit_function_lemma}
Recall the definition of the merit function in~\cref{merit_function}. Define $w^k:= (w_1^k,\dots,w_T^k)$ for $k \geq 0$. Let $\{x^k,z^k,u^k,w_1^k,\dots,w_T^k\}_{k \geq 0}$ be the sequence generated by~\cref{alg:modifiednasa}. Suppose that
\begin{align}
\gamma_1 \ge \lambda >0, \quad \beta >\lambda, \quad    (\beta-\lambda ) (\gamma_j - \lambda) \ge 4 T C_j^2 , \qquad  j \in \{2,\ldots,T\}, \label{new_nasa_merit_function_upperbound_assumption}
\end{align}
where $C_j$'s are defined in Lemma~\ref{FZ_difference}. Then, under~\cref{fi_lips} and~\cref{assumption:original_nasa_assumption}, we have
\begin{align} \label{modified_nasa_main_rec}
&\lambda \sum_{k=0}^{N-1}\tau_k\left[\|d^k\|^2 + \sum_{i=1}^{T-1}\|f_i(w_{i+1}^k) - w_i^k\|^2 + \|f_T(x^k) - w_T^k\|^2 \right] \notag\\
&\leq W_\gamma(x^0,z^0,w^0) + \sum_{k=0}^{N-1} \hat{R}^{k+1},
\end{align}
where, for any $k \geq 0$,
\begin{align}
\hat{R}^{k+1} & := \left(\sum_{i=1}^T\gamma_i\hat{r}_i^{k+1}\right) + \frac{\tau_k^2}{2}\left[(L_{\nabla F} + L_{\nabla \eta} \right] + \tau_k\langle d^k,\Delta_k\rangle + \frac{L_{\nabla \eta}}{2}\|z^{k+1} - z^k\|^2, \label{def_Rhatk}\\
    \hat{r}_i^{k+1} & = \left[8 L_{ f_i}^2+ L_{\nabla f_i} \|f_i(w_{i+1}^k) - w_i^k\|+ \| \hat{e_i}^{k+1}\|^2\right] \|w_{i+1}^{k+1}-w_{i+1}^k\|^2 \notag\\
    &+\tau_k^2\|e_i^{k+1}\|^2 + \dot{r}_i^{k+1},\notag
\end{align}
and $\Delta_k$ and $\dot{r}_i^{k+1}$ are, respectively, defined in \eqnok{def_deltak} and \eqnok{new_nasa_r_i}.
Furthermore, notice that~\cref{new_nasa_merit_function_upperbound_assumption} is satisfied, when we pick
{\color{black}
\begin{align}\label{new_nasa_merit_function_gamma_choice}
\gamma_1 = \lambda = \sqrt{T}, \qquad \beta = 2\sqrt{T}, \qquad  \gamma_j = \sqrt{T}(1  + 4 C_j^2) \qquad 2\le j \le T.
\end{align}
}

\end{lemma}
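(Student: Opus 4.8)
The plan is to mimic the structure of the proof of \cref{original_nasa_merit_function_lemma}, telescoping the merit function along the trajectory of \cref{alg:modifiednasa}, but replacing the function-value recursion from \cref{nasa_lemma_fn_w_error} with the linearized recursion from \cref{new_nasa_lemma_fn_w_error}. First I would estimate the change in each piece of $W_\gamma$ between iterations $k$ and $k+1$. The term $F(x^{k+1})-F(x^k)$ is bounded exactly as in \eqnok{F_lips} via the descent lemma and $L_{\nabla F}$ from \cref{F_Lipschitz_Constant}, using $x^{k+1}-x^k = \tau_k d^k$. The term $\eta(x^k,z^k)-\eta(x^{k+1},z^{k+1})$ is bounded exactly as in \eqnok{eta_lips}, using \cref{eta_lips_lemma}, the optimality condition \eqnok{opt_QP}, and the update rules \eqnok{def_xk}, \eqnok{def_zk}; this produces the $-\beta\tau_k\|d^k\|^2$ term together with $-\tau_k\langle d^k,\prod_i J^{k+1}_{T-i+1}\rangle$ and the $\frac{L_{\nabla\eta}}{2}(\tau_k^2\|d^k\|^2+\|z^{k+1}-z^k\|^2)$ remainder.

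Next, for the sum $\sum_{i=1}^{T-1}\gamma_i\|f_i(w_{i+1})-w_i\|^2 + \gamma_T\|f_T(x)-w_T\|^2$, I would invoke \cref{new_nasa_lemma_fn_w_error}.a), i.e.\ \eqnok{new_nasa_fn_w_error_i}, for each $i$, so that the contraction factor $(1-\tau_k)$ gives up a $-\tau_k\sum_i\gamma_i\|f_i(w_{i+1}^k)-w_i^k\|^2$ contribution (with $w_{T+1}^k\equiv x^k$), and the remaining pieces — the $\tau_k^2\|e_i^{k+1}\|^2$, the $\dot r_i^{k+1}$, and the bracketed coefficient times $\|w_{i+1}^{k+1}-w_{i+1}^k\|^2$ — are collected into $\hat r_i^{k+1}$ exactly as displayed in the statement. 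Crucially, unlike in \cref{alg:originalnasa}, the cross term coupling level $i$ to level $i+1$ is no longer $L_{f_i}^2\tau_k\|f_{i+1}(w_{i+2}^k)-w_{i+1}^k\|^2$ (a term of order $\tau_k$) but is instead absorbed into the $\|w_{i+1}^{k+1}-w_{i+1}^k\|^2$ factor, which \cref{moment_bounds} shows is of order $\tau_k^2$ in conditional expectation — this is exactly why the mini-batch is no longer needed and why $\hat r_i^{k+1}$ belongs with the error terms. So after summing the three estimates and using the definition \eqnok{merit_function}, I get
\[
W_\gamma(x^{k+1},z^{k+1},w^{k+1})-W_\gamma(x^k,z^k,w^k)\le -\tau_k(\beta-\gamma_T)\|d^k\|^2 + \tau_k\langle d^k,\delta^k\rangle - \tau_k\sum_{i=1}^{T}\gamma_i\|f_i(w_{i+1}^k)-w_i^k\|^2 + \hat R^{k+1},
\]
where $\delta^k$ abbreviates the difference $\nabla F(x^k)-\nabla f_T(x^k)\prod_{i=2}^T\nabla f_{T+1-i}(w^k_{T+2-i})$, whose norm is controlled by \cref{FZ_difference} in terms of $\sum_{j=2}^{T-1}C_j\|f_j(w_{j+1}^k)-w_j^k\| + C_T\|f_T(x^k)-w_T^k\|$, and $\hat R^{k+1}$ is as in \eqnok{def_Rhatk} after identifying the $\tau_k\langle d^k,\Delta_k\rangle$ piece coming from replacing $\prod_i J^{k+1}_{T-i+1}$ by its mean plus $\Delta_k$, and the $\frac{\tau_k^2}{2}(L_{\nabla F}+L_{\nabla\eta})\|d^k\|^2$ and $\frac{L_{\nabla\eta}}{2}\|z^{k+1}-z^k\|^2$ pieces from the first two estimates.

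The final step is the term-by-term Young's inequality absorption: for each $j\in\{2,\dots,T\}$, split off a $\frac{1}{T}$ fraction of the $-\tau_k(\beta-\gamma_T)\|d^k\|^2$ budget — actually more cleanly, of a $-\tau_k(\beta-\lambda)\|d^k\|^2$ budget after first reserving $-\lambda\tau_k\|d^k\|^2$ — and combine it with $-\tau_k(\gamma_j-\lambda)\|f_j(w_{j+1}^k)-w_j^k\|^2$ and the cross term $\tau_k C_j\|d^k\|\|f_j(w_{j+1}^k)-w_j^k\|$; the condition $(\beta-\lambda)(\gamma_j-\lambda)\ge 4TC_j^2$ in \eqnok{new_nasa_merit_function_upperbound_assumption} is precisely what makes $-\frac{\beta-\lambda}{T}\|d^k\|^2-(\gamma_j-\lambda)\|f_j\|^2+C_j\|d^k\|\|f_j\|\le 0$ hold (the AM-GM discriminant condition, with the factor $4$ rather than $\tfrac14\cdot 1$ because we only allot $1/T$ of the $\|d^k\|^2$ weight per level and there are $T$ levels, plus a factor $2$ of slack). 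The leftover negative terms are exactly $-\lambda\tau_k[\|d^k\|^2+\sum_{i=1}^{T-1}\|f_i(w_{i+1}^k)-w_i^k\|^2+\|f_T(x^k)-w_T^k\|^2]$ using $\gamma_1\ge\lambda$ for the $i=1$ level. Summing over $k=0,\dots,N-1$ and using $W_\gamma\ge 0$ telescopes to \eqnok{modified_nasa_main_rec}. Finally, to verify \eqnok{new_nasa_merit_function_gamma_choice}: with $\gamma_1=\lambda=1$, $\beta=2$, and $\gamma_i=1+4TC_i^2$, we have $\gamma_1\ge\lambda$, $\beta=2>1=\lambda$, and $(\beta-\lambda)(\gamma_i-\lambda)=1\cdot 4TC_i^2=4TC_i^2$, so \eqnok{new_nasa_merit_function_upperbound_assumption} holds with equality. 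The main obstacle is the careful bookkeeping in the function-value step — tracking which terms are genuinely order-$\tau_k^2$ (hence safe inside $\hat R^{k+1}$, to be controlled later using \cref{moment_bounds}) versus which must be cancelled by the negative $-\tau_k\gamma_i\|f_i-w_i\|^2$ terms — since the coefficient $8L_{f_i}^2+L_{\nabla f_i}\|f_i(w_{i+1}^k)-w_i^k\|+\|\hat e_i^{k+1}\|^2$ multiplying $\|w_{i+1}^{k+1}-w_{i+1}^k\|^2$ is itself random and unbounded, and only becomes manageable after taking conditional expectations and invoking the moment bounds of \cref{moment_bounds}, which is deferred to the convergence theorem rather than done here.
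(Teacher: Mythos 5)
Your proposal follows essentially the same route as the paper's proof: bound the three pieces of $W_\gamma$ via \eqref{F_lips}, \eqref{eta_lips}, and \cref{new_nasa_lemma_fn_w_error}.a), invoke \cref{FZ_difference} for the cross terms, absorb them level-by-level with the discriminant condition \eqref{new_nasa_merit_function_upperbound_assumption}, telescope, and check the parameter choice (including the correct observation that \eqref{new_nasa_merit_function_gamma_choice} makes the condition hold with equality).

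One bookkeeping slip worth fixing: your displayed intermediate inequality carries the coefficient $-\tau_k(\beta-\gamma_T)\|d^k\|^2$, which is inherited from the \cref{alg:originalnasa} analysis in \cref{original_nasa_merit_function_lemma}. For \cref{alg:modifiednasa} the correct coefficient is $-\beta\tau_k\|d^k\|^2$: the level-$T$ recursion \eqref{new_nasa_fn_w_error_i} no longer produces a $\gamma_T L_{f_T}^2\tau_k\|d^k\|^2$ term at order $\tau_k$, because that contribution enters through $\|w_{T+1}^{k+1}-w_{T+1}^k\|^2=\tau_k^2\|d^k\|^2$ and is already packed into $\hat r_T^{k+1}$ (you say exactly this yourself one paragraph earlier). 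The distinction matters: with the stated parameters $\beta=2$ and $\gamma_T=1+4TC_T^2$, one has $\beta-\gamma_T<0$ in general, so the weaker display cannot yield the reserved $-\lambda\tau_k\|d^k\|^2$ plus the $-(\beta-\lambda)\tau_k\|d^k\|^2$ absorption budget you subsequently use. Your prose in the final step implicitly works with the full $-\beta$ budget, consistent with the paper, so the argument goes through once the display is corrected.
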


\begin{proof}
Noting \cref{new_nasa_lemma_fn_w_error} and definition of $\hat{r}_i^{k+1}$, we have, $\forall i \in \{1,2,\ldots, T\},$
\begin{align*}
    \|f_i(w_{i+1}^{k+1}) - w_i^{k+1}\|^2 - \|f_i(w_{i+1}^k) - w_i^k\|^2 & \leq -\tau_k\|f_i(w_{i+1}^k) - w_i^k\|^2 + \hat{r}_i^{k+1}.
\end{align*}
Combining the above inequalities with \eqnok{F_lips}, \eqnok{eta_lips}, noting definition of the merit function in \eqnok{merit_function}, and in the view of \cref{FZ_difference}, we obtain
\begin{align*}
    & W_\gamma(x^{k+1},z^{k+1},w^{k+1}) - W_\gamma(x^k,z^k,w^k) \\ \leq & -\beta \tau_k\|d^k\|^2 + \sum_{j=2}^{T-1}\tau_kC_j\|d^k\| \|f_j(w_{j+1}^k)-w_j^k\| + \tau_kC_T\|d^k\|\|f_T(x^k)-w_T^k\| \\  -& \sum_{j=1}^{T-1}\gamma_j\tau_k\|f_j(w_{j+1}^k) - w_j^k\|^2 - \gamma_T\tau_k\|f_T(x^k) - w_T^k\|^2 + \hat{R}^{k+1}
\end{align*}
{\color{black}Now if condition \cref{new_nasa_merit_function_upperbound_assumption} holds, for any $i \in \{1,\ldots,T\}$, we have
 \begin{align*}
&-\frac{\beta}{T}\|d^k\|^2 + C_i\|d^k\| \|f_i(w_{i+1}^k)-w_i^k\|- \gamma_i\|f_i(w_{i+1}^k) - w_i^k\|^2 \\
\le & -\lambda \big[\frac{1}{T}\|d^k\|^2 + \|f_i(w_{i+1}^k) - w_i^k\|^2\big].
\end{align*}
Combining the above inequalities, we obtain
\begin{align*}
& W_\gamma(x^{k+1},z^{k+1},w^{k+1}) - W_\gamma(x^k,z^k,w^k) \\
 \leq& -\lambda \tau_k \Big[\|d^k\|^2 + \sum_{j=1}^{T-1}\|f_j(w_{j+1}^k) - w_j^k\|^2 + \|f_T(x^k)-w_T^k\|^2\Big] + \hat{R}^{k+1}.
\end{align*}
}
Thus, by summing up the above inequalities and re-arranging the terms, we obtain \eqnok{modified_nasa_main_rec}. Finally, it is easy to see that \cref{new_nasa_merit_function_upperbound_assumption} holds, by picking the parameters as in~\cref{new_nasa_merit_function_gamma_choice}.
\end{proof}

\vgap

In the next result, we show the error terms in the right hand side of \eqnok{modified_nasa_main_rec} is bounded in the order of $\sum_{k=1}^N \tau_k^2$ in expectation.

\begin{proposition} \label{new_nasa_big_proposition}
Let $\hat{R}^k$ be defined in \eqnok{def_Rhatk}. The, under \cref{assumption:original_nasa_assumption}, we have
\[
\mathbb{E}[\hat{R}^{k+1} | \mathscr{F}_k] \leq \hat{\sigma}^2 \tau_k^2, \qquad \forall k \geq 1, 
\]
where 
\begin{align}
    \hat{\sigma}^2 & := \sum_{i=1}^{T}\gamma_i\left(\left[8L_{f_i}^2+L_{\nabla f_i}\sqrt{\sigma_{G_i}^2+(4L_{f_i}^2+ \hat \sigma_{J_i}^2)c_{i+1}}+ \hat \sigma_{J_i}^2 \right]{c}_{i+1} + \sigma_{G_i}^2\right) \notag\\
    &+  \frac{1}{2\beta^2}\left(\prod_{i=1}^T\sigma_{J_i}^2\right)[(1+4\beta^2) L_{\nabla \eta}+L_{\nabla F}]. \label{new_nasa_big_proposition_sigma_squared}
\end{align}

\end{proposition}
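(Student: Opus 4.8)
The idea is to take the conditional expectation $\mathbb{E}[\,\cdot\mid\mathscr{F}_k]$ term by term in the definition \cref{def_Rhatk} of $\hat R^{k+1}$, to argue that all the ``cross'' terms are conditionally mean zero, and then to bound the surviving nonnegative terms with the moment estimates already established (\cref{dk_bnd}, \cref{zk_error}, \cref{new_nasa_conditional_expectation_w_T_second_power}, \cref{fi-wi_bound}) together with the variance bounds of \cref{assumption:original_nasa_assumption}.

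\emph{Step 1 (the mean-zero terms).} First I would record the measurability structure at step $k$: the iterates $x^k,z^k,u^k,w_i^k$ — hence $d^k$, $f_i(w_{i+1}^k)-w_i^k$, $\nabla f_i(w_{i+1}^k)$, and (when $i=T$) also $w_{i+1}^{k+1}-w_{i+1}^k$ and $\hat A_{k,i}$ — are $\mathscr{F}_k$-measurable, whereas for $i<T$ the quantities $w_{i+1}^{k+1}-w_{i+1}^k$ and $\hat A_{k,i}$ are measurable functions of the oracle outputs at levels $i+1,\dots,T$ only. Combined with parts 1, 3, 4 of \cref{assumption:original_nasa_assumption} (unbiasedness of $e_i^{k+1}$ and $\hat e_i^{k+1}$, within-level independence of $G_i^{k+1}$ and $J_i^{k+1}$, and between-level independence), this makes $(e_i^{k+1},\hat e_i^{k+1})$ conditionally independent of $(w_{i+1}^{k+1}-w_{i+1}^k,\hat A_{k,i})$ given $\mathscr{F}_k$. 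Inspecting \cref{new_nasa_r_i}, every inner product in $\dot r_i^{k+1}$ pairs one of $e_i^{k+1},\hat e_i^{k+1}$ against a factor that is either $\mathscr{F}_k$-measurable or built from strictly higher-level outputs, so $\mathbb{E}[\dot r_i^{k+1}\mid\mathscr{F}_k]=0$; likewise $\mathbb{E}[\langle d^k,\Delta_k\rangle\mid\mathscr{F}_k]=0$, since $d^k\in\mathscr{F}_k$ and, as already noted in the proof of \cref{original_nasa_big_proposition}, $\mathbb{E}[\Delta_k\mid\mathscr{F}_k]=0$ (the product of the conditionally independent, unbiased $J$'s equals $\nabla f_T(x^k)\prod_{i=2}^T\nabla f_{T+1-i}(w_{T+2-i}^k)$).

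\emph{Step 2 (bounding the rest).} After Step 1 the surviving quantity is $\mathbb{E}[\hat R^{k+1}\mid\mathscr{F}_k]=\sum_{i=1}^T\gamma_i\big(\mathbb{E}[(8L_{f_i}^2+L_{\nabla f_i}\|f_i(w_{i+1}^k)-w_i^k\|+\|\hat e_i^{k+1}\|^2)\|w_{i+1}^{k+1}-w_{i+1}^k\|^2\mid\mathscr{F}_k]+\tau_k^2\mathbb{E}[\|e_i^{k+1}\|^2\mid\mathscr{F}_k]\big)+\tfrac{\tau_k^2}{2}(L_{\nabla F}+L_{\nabla\eta})\mathbb{E}[\|d^k\|^2\mid\mathscr{F}_k]+\tfrac{L_{\nabla\eta}}{2}\mathbb{E}[\|z^{k+1}-z^k\|^2\mid\mathscr{F}_k]$. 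Here I would use $\mathbb{E}[\|e_i^{k+1}\|^2\mid\mathscr{F}_k]\le\sigma_{G_i}^2$ and $\mathbb{E}[\|\hat e_i^{k+1}\|^2\mid\mathscr{F}_k]\le\hat\sigma_{J_i}^2$; $\mathbb{E}[\|d^k\|^2\mid\mathscr{F}_k]\le\beta^{-2}\prod_i\sigma_{J_i}^2$ from \cref{dk_bnd}; $\mathbb{E}[\|z^{k+1}-z^k\|^2\mid\mathscr{F}_k]\le4\tau_k^2\prod_i\sigma_{J_i}^2$ from \cref{zk_error}; and, pulling the $\mathscr{F}_k$-measurable factor $\|f_i(w_{i+1}^k)-w_i^k\|$ out and invoking conditional independence of $\|w_{i+1}^{k+1}-w_{i+1}^k\|^2$ from $\hat e_i^{k+1}$, the bound $\mathbb{E}[\|w_{i+1}^{k+1}-w_{i+1}^k\|^2\mid\mathscr{F}_k]\le c_{i+1}\tau_k^2$ from \cref{new_nasa_conditional_expectation_w_T_second_power}. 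This makes every term a constant times $\tau_k^2$, except for the one still carrying the factor $\|f_i(w_{i+1}^k)-w_i^k\|$.

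\emph{Step 3 (leftover factor and assembly).} To turn $\|f_i(w_{i+1}^k)-w_i^k\|$ into a constant I would take one further conditional expectation: by Jensen's inequality and \cref{fi-wi_bound} applied at step $k-1$, $\mathbb{E}[\|f_i(w_{i+1}^k)-w_i^k\|\mid\mathscr{F}_{k-1}]\le(\sigma_{G_i}^2+(4L_{f_i}^2+\hat\sigma_{J_i}^2)c_{i+1})^{1/2}$, which is exactly the radical appearing in \cref{new_nasa_big_proposition_sigma_squared}. Collecting the per-level contributions $\gamma_i\big([8L_{f_i}^2+L_{\nabla f_i}(\sigma_{G_i}^2+(4L_{f_i}^2+\hat\sigma_{J_i}^2)c_{i+1})^{1/2}+\hat\sigma_{J_i}^2]c_{i+1}+\sigma_{G_i}^2\big)$ and adding the $d^k$- and $z$-contributions $\tfrac12(L_{\nabla F}+L_{\nabla\eta})\beta^{-2}\prod_i\sigma_{J_i}^2+2L_{\nabla\eta}\prod_i\sigma_{J_i}^2=\tfrac{1}{2\beta^2}[(1+4\beta^2)L_{\nabla\eta}+L_{\nabla F}]\prod_i\sigma_{J_i}^2$ reproduces $\hat\sigma^2$, giving the claim. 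The main obstacle is the conditional-independence bookkeeping in Step 1 — sorting out precisely which quantities lie in $\mathscr{F}_k$ and which depend only on strictly higher-level oracle calls, so that each inner product in $\dot r_i^{k+1}$ and the Jacobian product inside $\Delta_k$ are genuinely conditionally mean zero — together with the fact that $\|f_i(w_{i+1}^k)-w_i^k\|$ is $\mathscr{F}_k$-measurable yet unbounded, which is why it must be absorbed by conditioning one step earlier and hence forces the $\sqrt{\cdot}$ form of $\hat\sigma^2$.
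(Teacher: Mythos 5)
Your proposal is correct and follows essentially the same route as the paper's proof: both arguments first kill the mean-zero terms $\dot r_i^{k+1}$ and $\langle d^k,\Delta_k\rangle$ via \cref{assumption:original_nasa_assumption}, and then bound the surviving terms using \cref{dk_bnd}, \cref{zk_error}, and the moment estimates of \cref{moment_bounds}, arriving at the same $\hat\sigma^2$. The only (minor) divergence is the mixed term $L_{\nabla f_i}\|f_i(w_{i+1}^k)-w_i^k\|\,\|w_{i+1}^{k+1}-w_{i+1}^k\|^2$, which the paper dispatches with a one-line H\"older-type split at the conditional level, whereas you pull out the $\mathscr{F}_k$-measurable factor and apply Jensen after conditioning one step earlier --- both yield the same radical $\sqrt{\sigma_{G_i}^2+(4L_{f_i}^2+\hat\sigma_{J_i}^2)c_{i+1}}$, and your version is, if anything, the more careful treatment of the conditioning.
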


\begin{proof}
Under \cref{assumption:original_nasa_assumption}, we have, for $1 \leq i \leq T$,
\[
\mathbb{E}[\Delta_k | \mathscr{F}_k] = 0, \quad \mathbb{E}[\dot{r}_i^{k+1}|\mathscr{F}_k] = 0, \quad \mathbb{E}[ \| \hat{e_i}^{k+1}\|^2|\mathscr{F}_k] \le \sigma_{G_i}^2, \quad \mathbb{E}[ \| {e_i}^{k+1}\|^2|\mathscr{F}_k] \le \hat \sigma_{J_i}^2.
\]
Moreover, by \cref{moment_bounds} and Holder's inequality, we have $\mathbb{E}[\|w_i^{k+1} - w_i^k\|^2 | \mathscr{F}_k] \leq c_i~\tau^2_k$ and
\begin{align*}
&\mathbb{E}[ \|f_i(w_{i+1}^{k+1}) - w_i^{k+1}\| \|w_i^{k+1} - w_i^k\|^2| \mathscr{F}_k] \\ 
\le& \left(\mathbb{E}[ \|f_i(w_{i+1}^{k+1}) - w_i^{k+1}\|^2 | \mathscr{F}_k]\right)^{\frac12} \mathbb{E}[\|w_i^{k+1} - w_i^k\|^2 | \mathscr{F}_k]\\
\leq&  c_i \sqrt{\sigma_{G_i}^2+(4L_{f_i}^2+ \hat \sigma_{J_i}^2)c_{i+1}} \tau^2_k
\end{align*}
The result then follows by noting the definition of $\hat \sigma^2$ in \cref{new_nasa_big_proposition_sigma_squared}
\end{proof}

\vgap

We are now ready to state the convergence rates via the following theorem.
\begin{theorem}\label{optimal_theorem}
Suppose that $\{ x^k, z^k \}_{k\geq 0}$ are generated by~\cref{alg:modifiednasa}, and~\cref{fi_lips} and~\cref{assumption:original_nasa_assumption} hold. Also assume the parameters satisfy~\cref{new_nasa_merit_function_gamma_choice} and the step sizes $\{\tau_k\}$ satisfy \eqnok{gamma_condition}.
\begin{itemize}
\item [(a)] The results in parts a) and b) of \cref{nasa_main_theorem} still hold by replacing $\sigma^2$ by $\hat \sigma^2$.

\item [b)] If $\tau_k$ is set as in \eqnok{def_tau}, the results of part c) of \cref{nasa_main_theorem} also hold with $\hat \sigma^2$ replacing $\sigma^2$.
\end{itemize}

\end{theorem}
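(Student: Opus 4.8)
The proof is a direct transcription of the proof of \cref{thm:main_theorem_original_nasa}, with the analogues established in this section substituted for those used there. The key structural observation is that the update \eqnok{def_zk} for $z^{k+1}$ is \emph{identical} in \cref{alg:originalnasa} and \cref{alg:modifiednasa}; only the recursion for the $w_i^k$'s is modified. Consequently, the decomposition $\nabla F(x^{k+1}) - z^{k+1} = (1-\tau_k)(\nabla F(x^k)-z^k)+\tau_k(\delta^k+\bar\delta^k+\Delta^k)$ used in the proof of part (a) of \cref{thm:main_theorem_original_nasa}, and every step that follows from it — invoking \cref{Gamma_lemma}, using \eqnok{gamma_condition}, bounding $\|\delta^k\|^2$ through \cref{FZ_difference}, and bounding $\mathbb{E}[\|\Delta^k\|^2\mid\mathscr{F}_k]\le\prod_{\ell=1}^T\sigma_{J_\ell}^2$ under \cref{assumption:original_nasa_assumption} — carries over verbatim, producing the same intermediate inequality that expresses $\sum_{k=1}^N\tau_k\,\mathbb{E}[\|\nabla F(x^k)-z^k\|^2\mid\mathscr{F}_k]$ in terms of $\sum_{k=0}^{N-1}\tau_k(\sum_{j=2}^{T-1}\|f_j(w_{j+1}^k)-w_j^k\|^2+\|f_T(x^k)-w_T^k\|^2+\|d^k\|^2)$ plus $c\prod_{\ell=1}^T\sigma_{J_\ell}^2\sum_{k=0}^{N-1}\tau_k^2$.

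From here I would close the argument exactly as before, but feeding in \cref{new_nasa_merit_function_lemma} in place of \cref{original_nasa_merit_function_lemma} and \cref{new_nasa_big_proposition} in place of \cref{original_nasa_big_proposition}: the merit-function recursion \eqnok{modified_nasa_main_rec} (which holds under the parameter choice \eqnok{new_nasa_merit_function_gamma_choice}, itself compatible with \eqnok{gamma_condition}) replaces \eqnok{main_recursion}, and the bound $\mathbb{E}[\hat R^{k+1}\mid\mathscr{F}_k]\le\hat\sigma^2\tau_k^2$ replaces \eqnok{Rk_bnd}. This substitution changes nothing in the algebra except that every appearance of $\sigma^2$ becomes $\hat\sigma^2$, giving part (a) of \cref{thm:main_theorem_original_nasa} for \cref{alg:modifiednasa}. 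For part (b), the inequality \eqnok{Lyaponuv_function2} bounding $V(x^k,z^k)$ by $\max(1,\beta^2)\|u^k-x^k\|^2+\|z^k-\nabla F(x^k)\|^2$ depends only on the subproblem \eqnok{def_uk}, which is unchanged, so combining it with \eqnok{modified_nasa_main_rec}, part (a), and the randomization $\mathbb{P}[R=k]=\tau_k/\sum_j\tau_j$ reproduces \eqnok{nasa_main_theorem2} with $\hat\sigma^2$. Finally, for part (b) of the present theorem, the step-size choice \eqnok{def_tau} yields the identities $\sum_{k=1}^N\tau_k\ge\sqrt N$, $\sum_{k=0}^N\tau_k^2=2$, $\Gamma_k=(1-1/\sqrt N)^{k-1}$, and \eqnok{gamma_condition} with $c=1$ — the same arithmetic facts used in part (c) of \cref{thm:main_theorem_original_nasa} — so \eqnok{nasa_Fk-zk}, \eqnok{nasa_main_theorem3} and \eqnok{nasa_main_theorem4} hold verbatim with $\hat\sigma^2$ replacing $\sigma^2$.

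I do not expect a genuinely new difficulty at this stage: the substantive work has already been carried out in \cref{new_nasa_lemma_fn_w_error}, \cref{moment_bounds}, \cref{new_nasa_merit_function_lemma}, and \cref{new_nasa_big_proposition}, where the linearized update \eqnok{def_wi_new} is actually exploited to make $\mathbb{E}[\|\hat R^{k+1}\mid\mathscr{F}_k]$ of order $\tau_k^2$ without mini-batches. The only point that requires care here is the bookkeeping: one must check that the three ingredients driving the proof of \cref{thm:main_theorem_original_nasa} — the merit-function recursion, the conditional $O(\tau_k^2)$ bound on the error term, and the $\nabla F(x^k)-z^k$ recursion — each have an \emph{exact} analogue for \cref{alg:modifiednasa} with the same $\tau_k$-dependence, and that the merit-function recursion is now invoked under \eqnok{new_nasa_merit_function_upperbound_assumption}/\eqnok{new_nasa_merit_function_gamma_choice} rather than under \eqnok{parameter_def}. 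I expect this verification to be the main, though routine, obstacle.
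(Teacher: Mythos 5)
Your proposal is correct and follows essentially the same route as the paper, whose proof of \cref{optimal_theorem} is a one-line appeal to the argument of \cref{nasa_main_theorem} combined with \eqnok{modified_nasa_main_rec} and \cref{new_nasa_big_proposition}. You have simply spelled out the bookkeeping (unchanged $z$-update, substitution of the merit-function recursion and the $\hat\sigma^2$ bound) that the paper leaves implicit.
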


\begin{proof}
The proof follows from the same arguments as in the proof of \cref{nasa_main_theorem} by noticing \eqnok{modified_nasa_main_rec}, and \cref{new_nasa_big_proposition}, hence, we skip the details.

\end{proof}
\begin{remark}\label{remark:rmk2}
Note that \cref{alg:modifiednasa} does not use a mini-batch of samples in any iteration. Thus, \eqnok{nasa_main_theorem3} (in which $\sigma^2$ is replaced by $\hat \sigma^2$) implies that the total sample complexity of \cref{alg:modifiednasa} for finding an $\epsilon$-stationary point of \cref{eq:mainprob}, is bounded by ${\cal O}(c^T T^4/\epsilon^4)={\cal O}_T(1/\epsilon^4)$ which is better in the order of magnitude than the complexity bound of \cref{alg:originalnasa}. Furthermore, this bound matches the complexity bound obtained in \cite{GhaRuswan20} for the two-level composite problem which in turn is in the same order for single-level smooth stochastic optimization. Finally, it is worth noting that this complexity bound for \cref{alg:modifiednasa} is obtained without assuming boundedness of the feasible set or any dependence of the parameter $\beta$ on Lipschitz constants. Indeed, $\beta$ can be set to any positive number in the order of ${\cal O}(\sqrt{T})$ due to \eqnok{new_nasa_merit_function_gamma_choice}, and $\tau_k$ depends only on the total number of iterations $N$ due to \eqnok{def_tau}. This makes \cref{alg:modifiednasa} parameter-free and easy to implement.
\end{remark}

\section{Numerical Experiments}\label{sec:exp}
In this section, we provide numerical results for the risk-averse stochastic optimization problem introduced in Section~\ref{sec:motivatingeg}. The link function $g$ is set to be the square function and $U(x, \xi) \coloneqq -(b - g(a^\top x))^2$. In this case,~\eqref{eq:mdriskaverse} becomes a non-convex stochastic three-level composition optimization problem. For our experiments, we assume $a\in \mathbb{R}^d$ is a zero-mean Gaussian random vector with covariance matrix $\Sigma_{j,k} = 0.5e^{-\frac{|j-k|}{d}}$, following~\cite{yang2019multi-level}. Furthermore, $\zeta$ is a standard normal random variable. The true parameter $x^* \in \mathbb{R}^d$ is drawn from a standard Gaussian distribution and fixed. 

We compare our Algorithm~\ref{alg:modifiednasa} with the accelerated T-level stochastic compositional gradient descent (a-TSCGD) from~\cite{yang2019multi-level}. For our algorithm, the parameter $\tau_k$ was set at $c/\sqrt{N}$ (with $c$ being 0.5, 1 and 1.5) and the step-size $\beta$ was set to 4 (as it is close to $2\sqrt{T}$ and empirically worked the best). The parameters for a-TSCGD were set according to the suggestion from~\cite{yang2019multi-level}. We estimated the expected gradient size empirically, based on an independent dataset of size 10,000, so as to reduce any fluctuations in this estimation process. Furthermore, we reported the average over 100 Monte-Carlo trails, to reduce the fluctuations over the data generating process. Figure~\ref{fig:expcomparison} plots the empirical gradient norm squared as a function of iteration, for the values of dimension $d \in \{100, 500, 1000\}$. As can be seem from the plots, Algorithm~\ref{alg:modifiednasa} outperforms a-TSCGD from~\cite{yang2019multi-level} numerically as well. Furthermore, our algorithm is almost  insensitive to the choice of $c$ in the definition of $\tau_k$. 
\begin{figure}[t]\label{fig:expcomparison}
\centering
\includegraphics[scale=0.38]{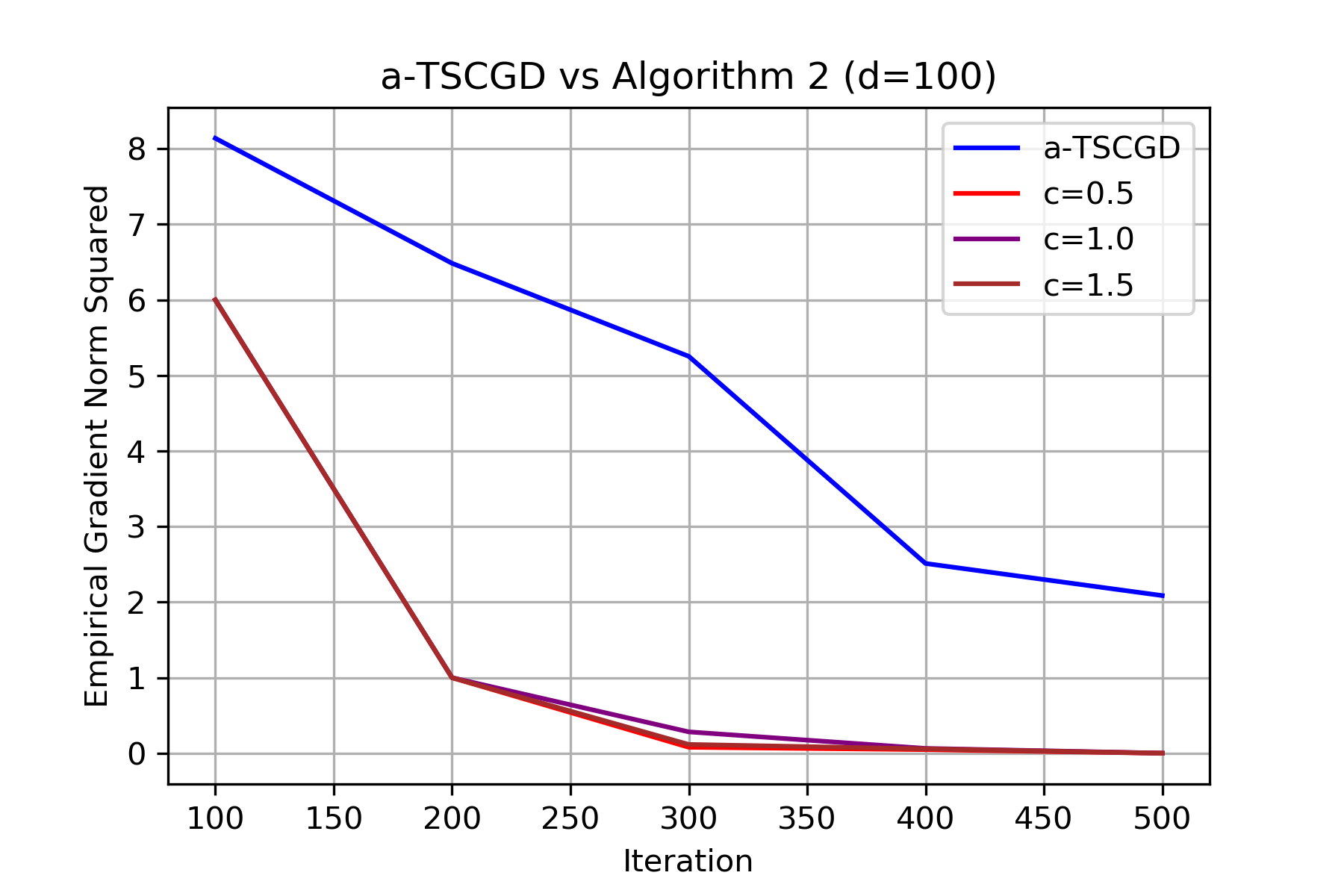}
\includegraphics[scale=0.38]{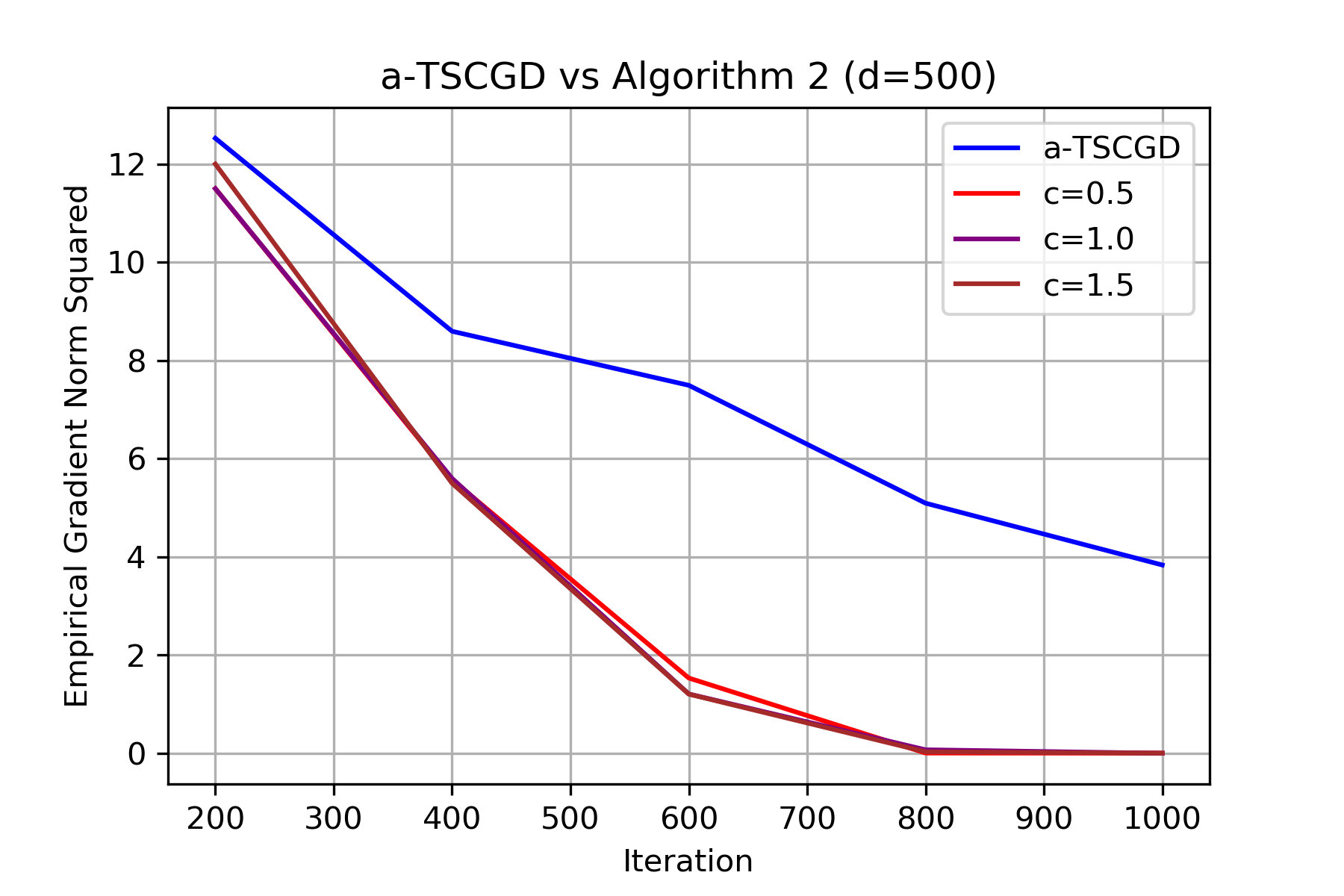}\\
\includegraphics[scale=0.38]{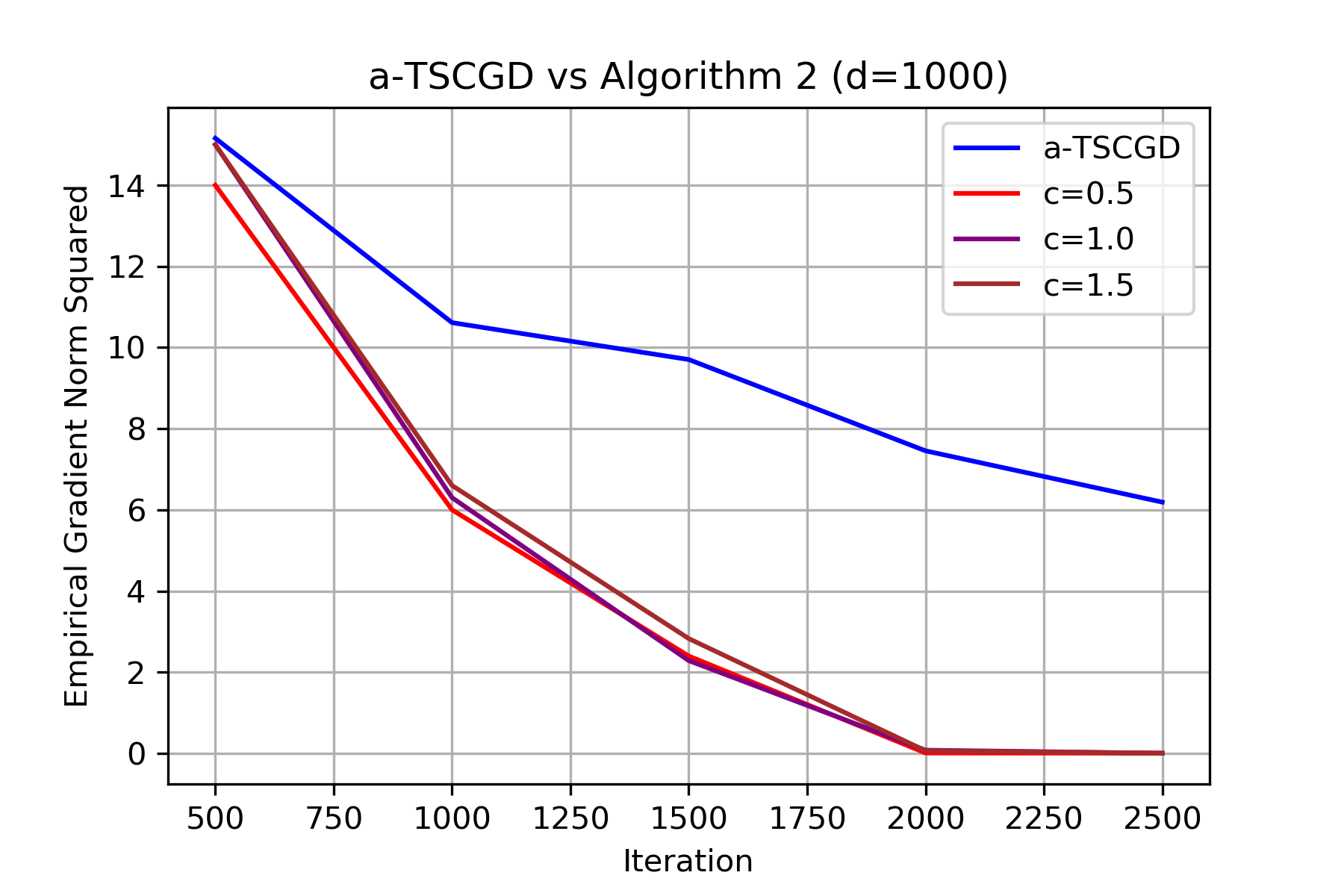}
\caption{Comparison between Algorithm~\ref{alg:modifiednasa} and a-TSCGD~\cite{yang2019multi-level}: Empirical gradient size squares versus iterations for $d=100$ (top left), $d=500$ (bottom) and $d=1000$ (top right). Here, $c$ refers to the choice of numerator in the tuning parameter $\tau_k$, given by $\tau_k\coloneqq: c/\sqrt{N}$.}
\end{figure}

\section{Concluding remarks}\label{conc_sec}
In this paper, we proposed two algorithms, with level-independent convergence rates, for stochastic multi-level composition optimization problems under the availability of a certain stochastic first-order oracle. We show that under a bounded second moment assumption on the outputs of the stochastic oracle, our first proposed algorithm, by using a mini-batch of samples in each iteration, achieves a sample complexity of ${\cal O}_T(1/\epsilon^6)$ for finding an $\epsilon$-stationary point of the multi-level composite problem. By modifying this algorithm with a linearization technique, we show that we can improve the sample complexity to ${\cal O}_T(1/\epsilon^4)$ which seems to be unimprovable even for single-level stochastic optimization problems, without further assumptions~\cite{Drori2019TheCO, arjevani2019lower}.
For future work, it would be interesting to establish CLT and normal approximation results for the online algorithms we presented in this work for stochastic multi-level composition optimization problems, similar to the results in~\cite{ruppert1988efficient, polyak1992acceleration, anastasiou2019normal, dieuleveut2020bridging, yu2020analysis} for the standard stochastic gradient algorithm when $T=1$.
\subsection*{Acknowledgment}
We would like to thank Prof. Andrzej Ruszczy{\'n}ski for helpful comments on the related works. Krishnakumar Balasubramanian was supported in part by UC Davis CeDAR (Center for Data Science and Artificial Intelligence Research) Innovative Data Science Seed Funding Program. Saeed Ghadimi was supported in part by an NSERC Discovery Grant.

\end{document}